\documentclass[11pt,a4paper]{article}
\usepackage{amsmath,amsfonts,amssymb,latexsym,graphics,epsfig}
\usepackage{color}
\usepackage{amsthm}
\usepackage{graphicx,url}
\usepackage{hyperref}
\usepackage[english]{babel}
\usepackage{epsfig}
\usepackage{enumerate}
\usepackage{graphicx}
\usepackage{floatrow}
\usepackage{float}
\usepackage{pifont}
\usepackage{caption}
\usepackage{subfig}
\usepackage{fullpage}
\usepackage{fancyhdr}
\usepackage[arrow,frame,matrix]{xy}

\newtheorem{thm}{Theorem}[section]
\newtheorem{lem}[thm]{Lemma}
\newtheorem{pro}[thm]{Proposition}
\newtheorem{de}[thm]{Definition}
\newtheorem{re}[thm]{Remark}

\newtheorem{cor}[thm]{Corollary}
\newtheorem{claim}[thm]{Claim}

\newtheorem{problem}{Problem}

\def\be{\begin{equation}}
\def\ee{\end{equation}}
\def\bea{\begin{eqnarray}}
\def\eea{\end{eqnarray}}
\numberwithin{equation}{section}

\begin{document}
\title{The integrally representable trees of norm $3$}
\author{Jack H. Koolen\thanks{J.H. Koolen is partially supported by the National Natural Science Foundation of China (Grant No. 11471009 and Grant No. 11671376).}~, Masood Ur Rehman\thanks{M.U. Rehman is supported by the Chinese Scholarship Council at USTC, China.}~, Qianqian Yang}
\maketitle
\date{}
\begin{abstract}
In this paper, we determine the integrally representable trees of norm $3$.
\end{abstract}

\emph{\textbf{Keywords and phrases: Integrally representable trees, integral lattices, trees, seedlings}}

\emph{\textbf{Mathematics Subject Classification: 05C50, 05C62, 11H99}}

\section{Introduction}
The connected graphs with spectral radius two were classified by Smith (see \cite[Theorem $3.2.5$]{drg} and \cite{smith}) and they are as follows (where the index equals $|V(G)|-1$):
\begin{equation*}
  \begin{array}{ll}
   \widetilde{A}_m~(m\geq2):\quad& \raisebox{-0.7ex}{\includegraphics[scale=0.8]{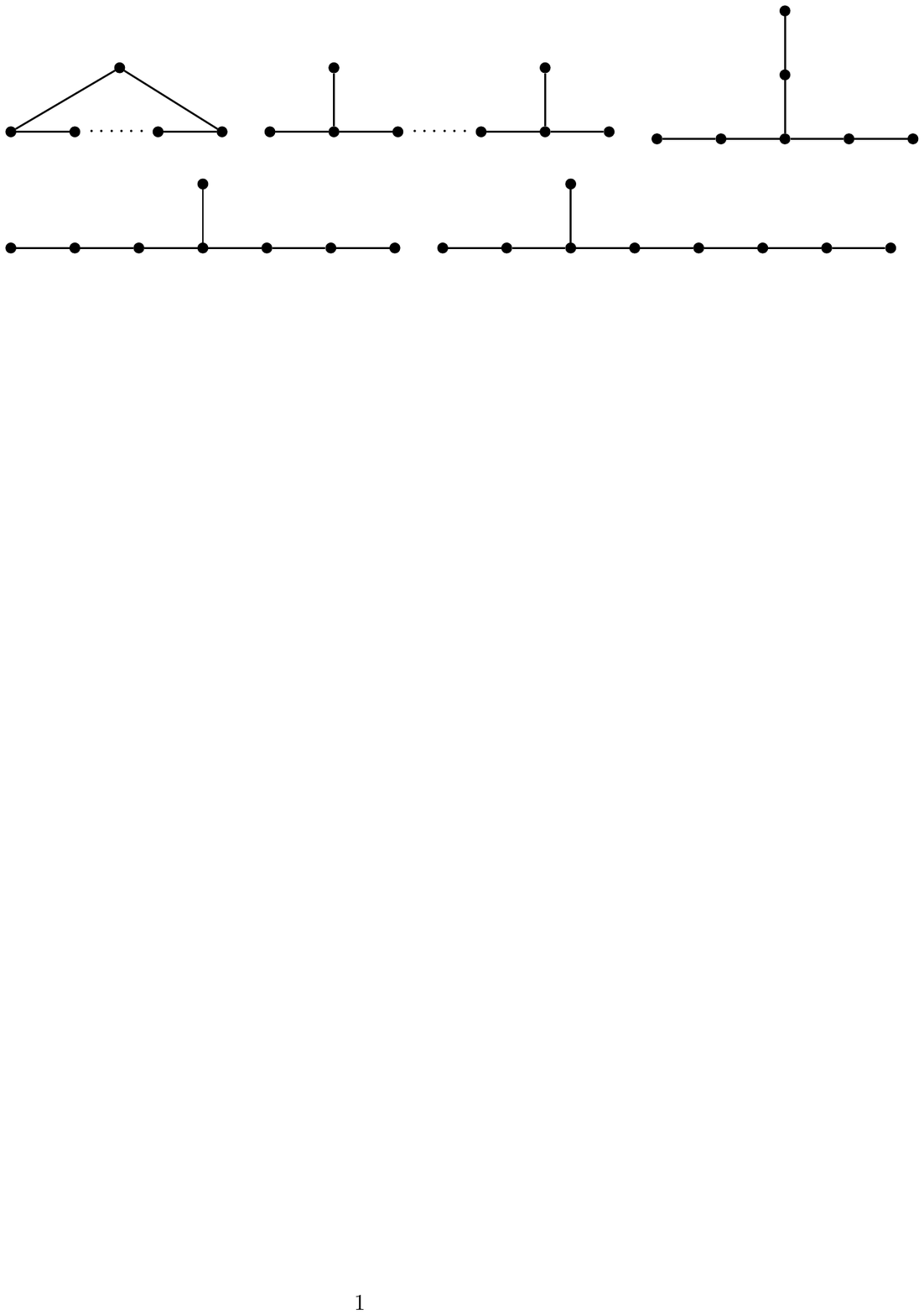}};\\
   \widetilde{D}_m~(m\geq4):\quad& \raisebox{-0.7ex}{\includegraphics[scale=0.8]{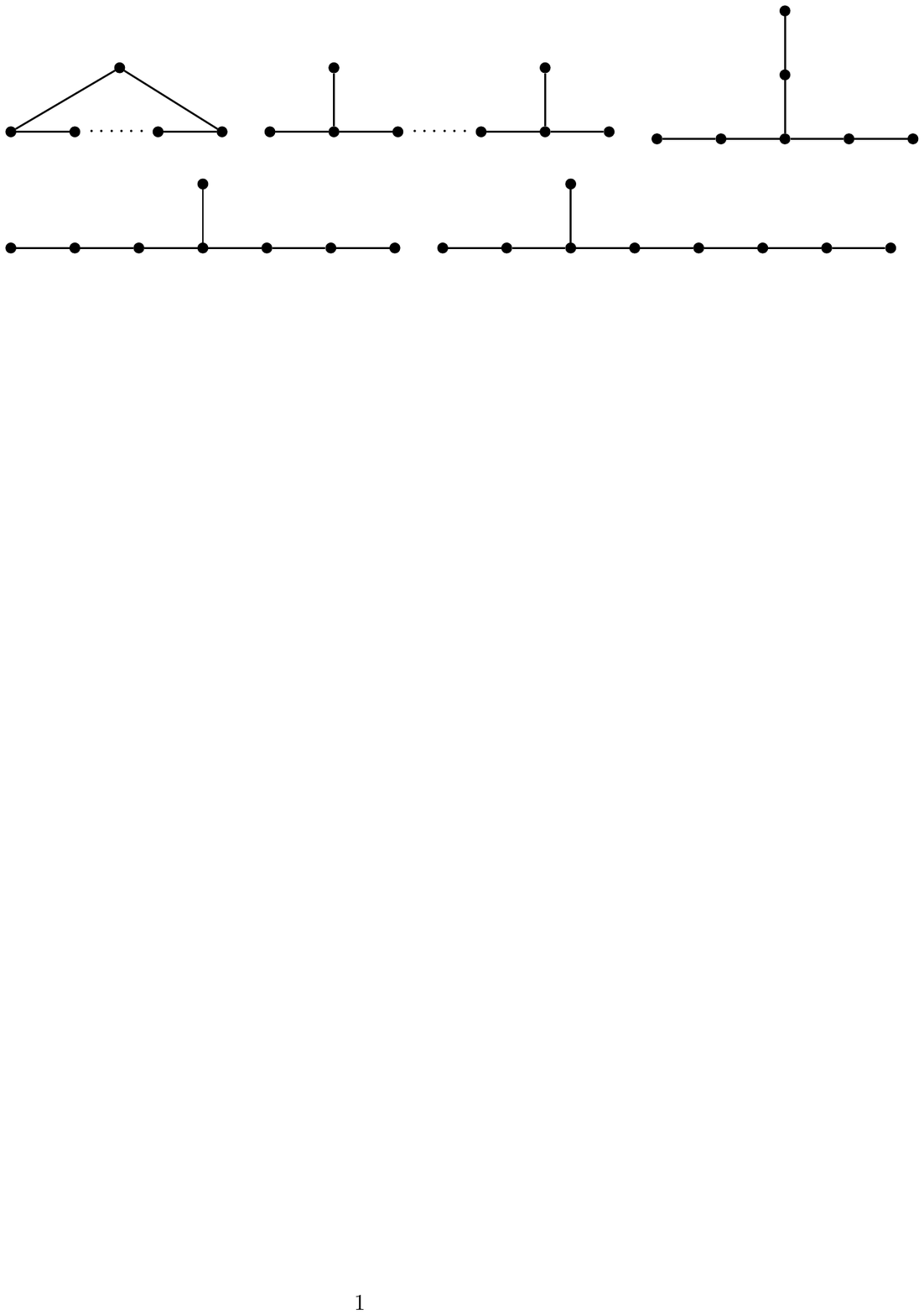}};\\
   \widetilde{E}_6:\quad &\raisebox{-0.7ex}{\includegraphics[scale=0.8]{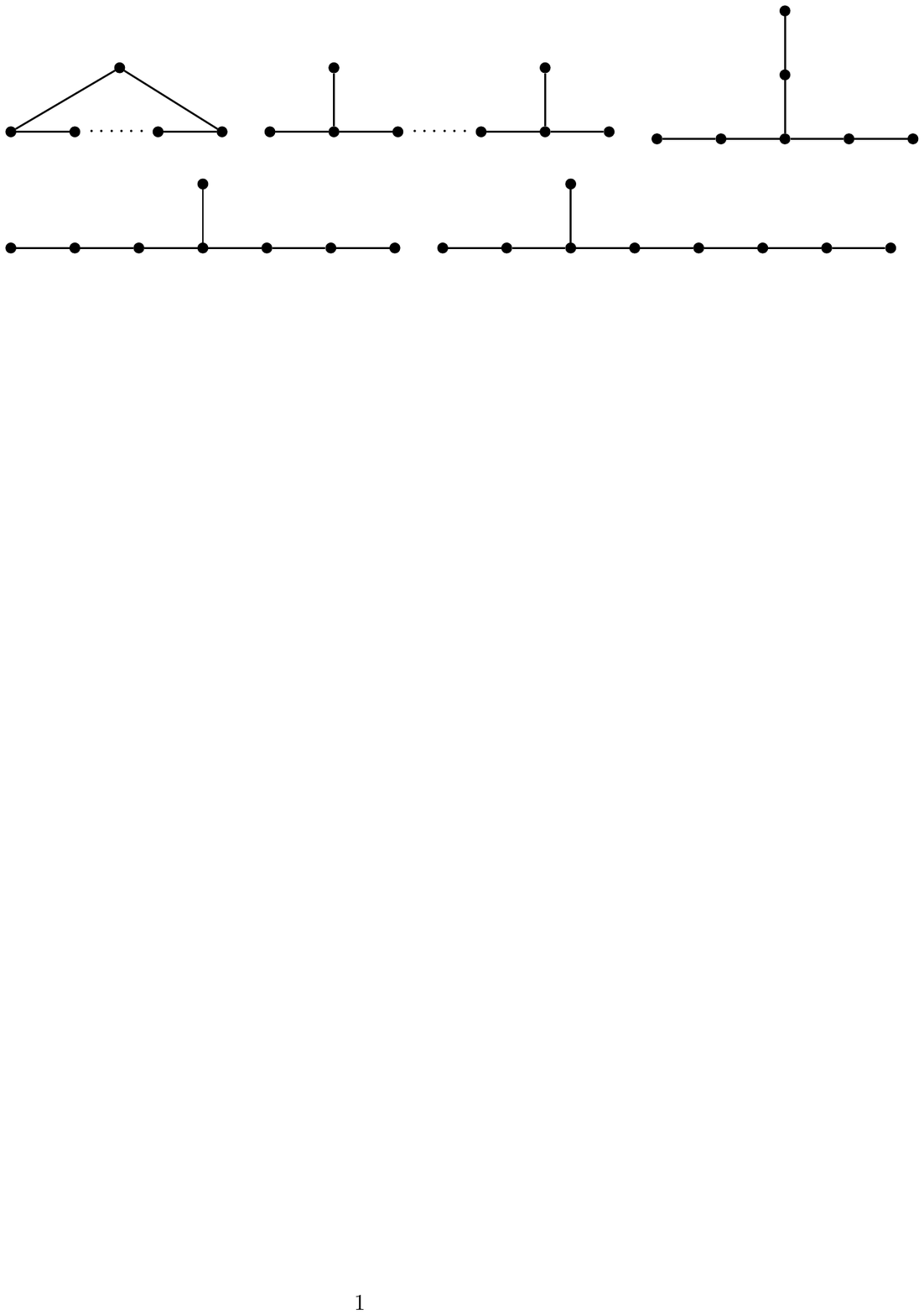}}; \\
   \widetilde{E}_7:\quad&\raisebox{-0.7ex}{\includegraphics[scale=0.8]{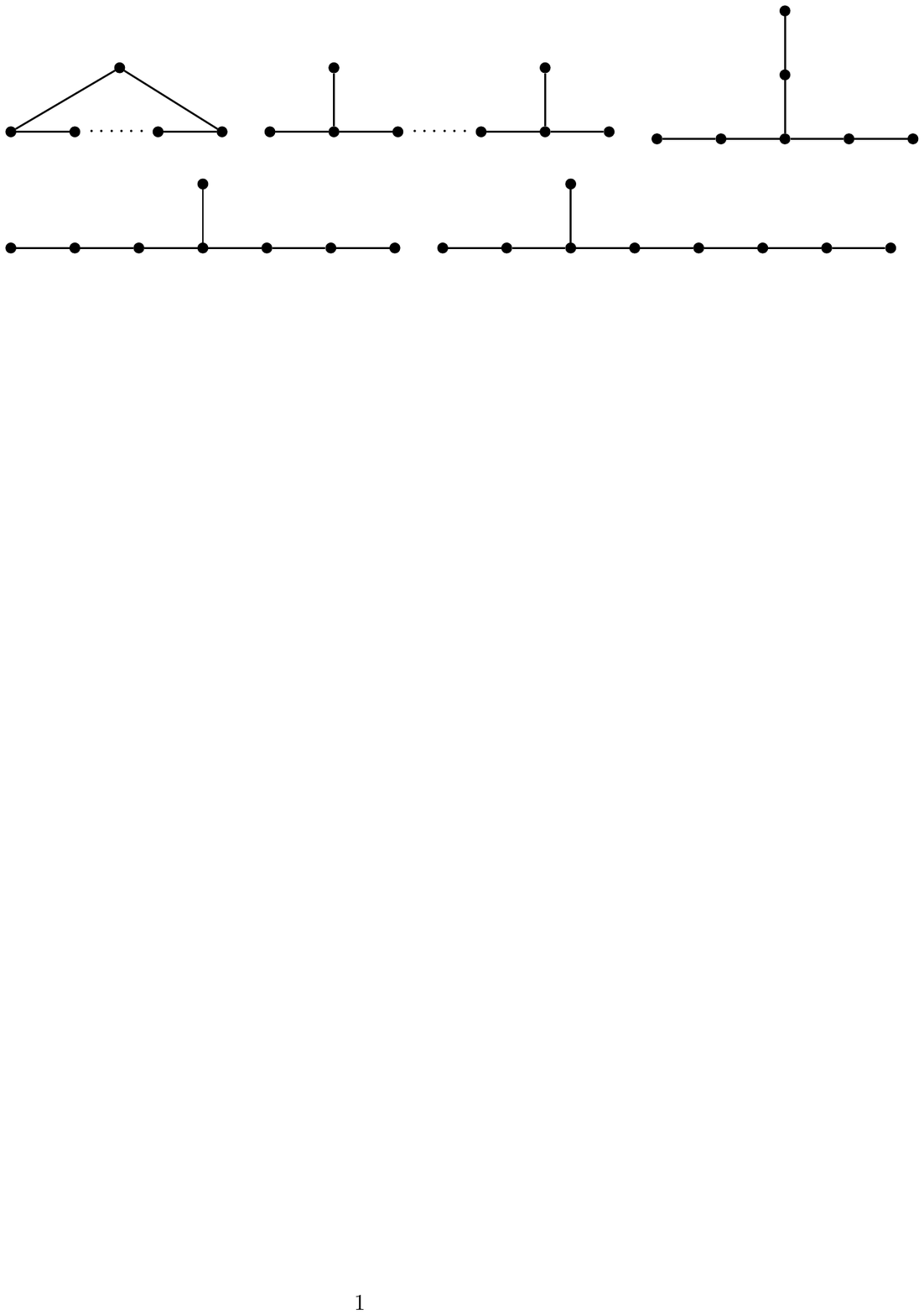}};\\
   \widetilde{E}_8:\quad&\raisebox{-0.7ex}{\includegraphics[scale=0.8]{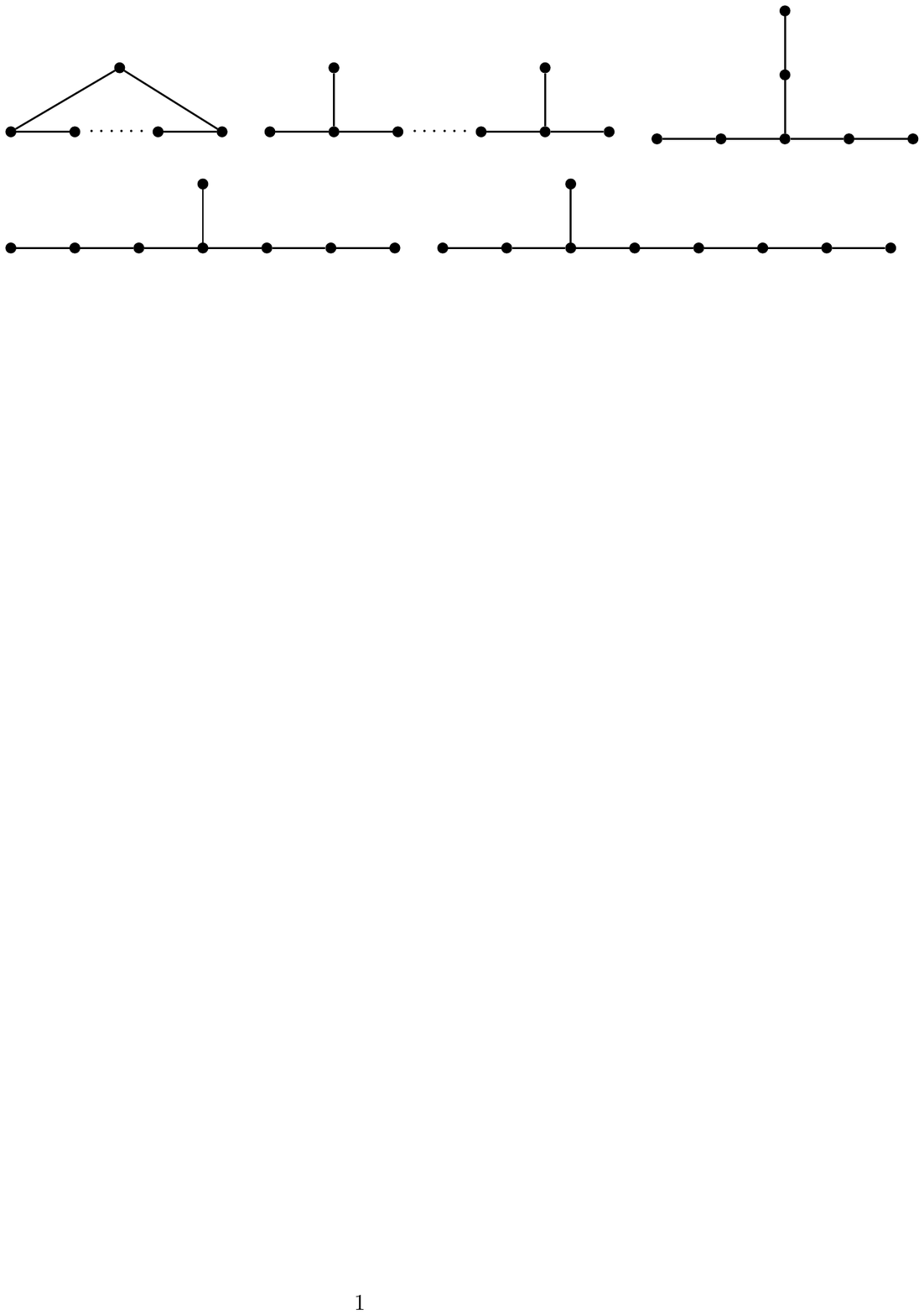}}.
\end{array}
\end{equation*}

Moreover, each connected graph with spectral radius at most $2$ is a subgraph of one of the graphs above, and each connected graph with radius at least $2$ contains one of the above graphs as an induced subgraph. Note that except for $\widetilde{A}_m$, they are all trees. And this classification is closely related to the classification of the irreducible root lattices. In this paper, we will look at trees with spectral radius at most $3$ and hence with smallest eigenvalue at least $-3$. Note that there are infinite trees with spectral radius $3$. For example, the family of trees in Figure \ref{tree}:
\begin{figure}[H]
\centering
\includegraphics[scale=1]{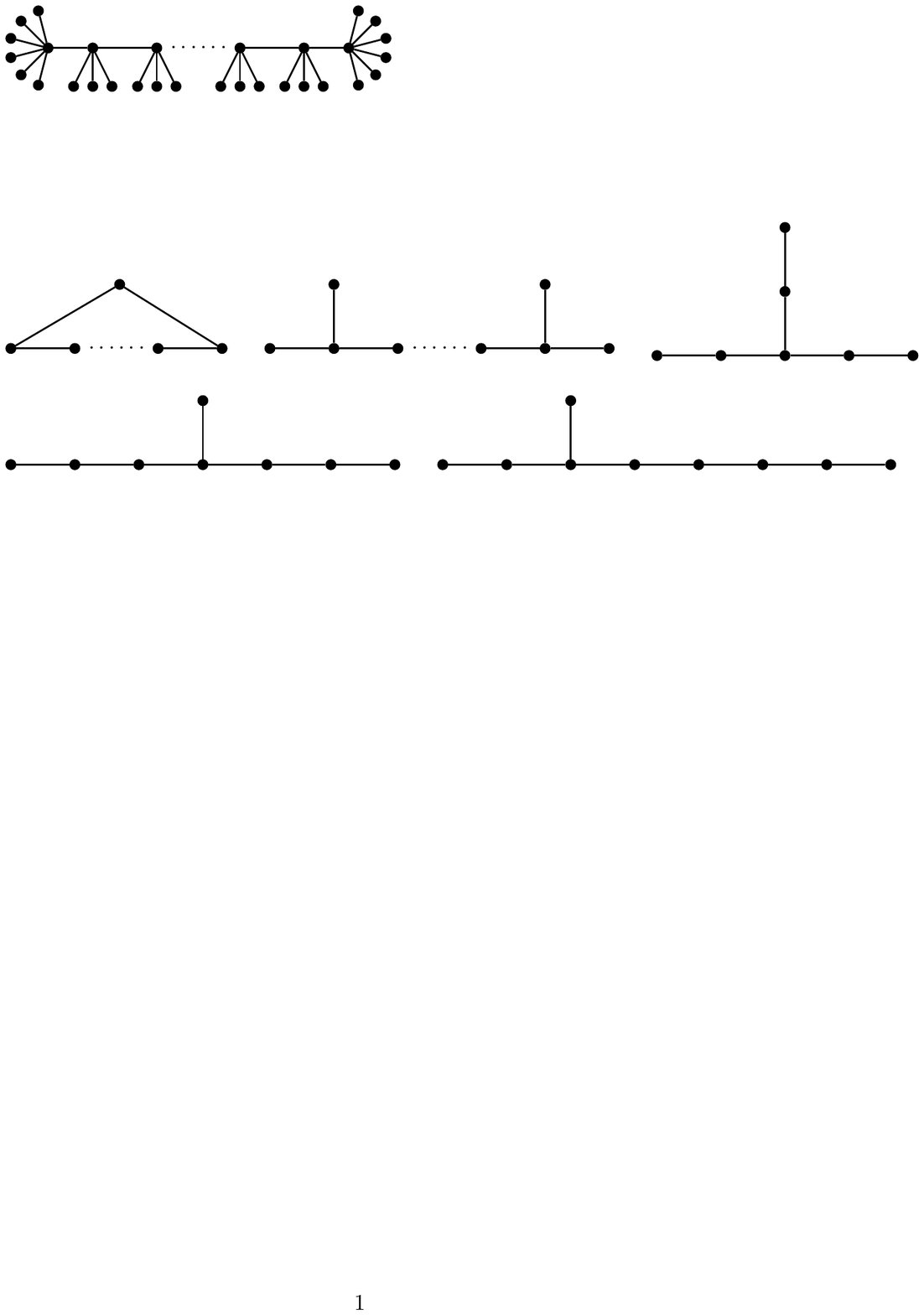}
\caption{}
\label{tree}
\end{figure}

Now let us look at graphs with smallest eigenvalue at least $-3$. Let $G$ be a graph with $m$ vertices, adjacency matrix $A$ and smallest eigenvalue at least $-3$. Then $A + 3I$ is positive semidefinite and hence the Gram matrix of a set of vectors $S =\{{\bf x}_1, \ldots, {\bf x}_m\}$. Then the lattice generated by $S$ is an integral lattice generated by vectors with norm $3$ ($=$ length $\sqrt{3}$). So understanding graphs with smallest eigenvalue at least $-3$ helps us to understand the integral lattices generated by norm $3$ vectors. And the trees with smallest eigenvalue at least $-3$ are the simplest among them.

In this paper, we will look at a restricted class of trees with spectral radius at most $3$, namely, those that are integrally representable of norm $3$. We say that the graph $G$ with adjacency matrix $A$ is integrally representable of norm $t$ if there exists a matrix $N$ with integral entries such that $A+tI = N^TN$. Note that an integrally representable graph $G$ with norm $t$ has smallest eigenvalue at least $-t$. In this paper, we will classify the integrally representable trees of norm $3$. For some other research on integral lattices, we refer to \cite{CS}, \cite{PP}, \cite{PP2} and \cite{SH}. For some earlier work on integral lattices generated by vectors of norm $3$, we refer to \cite{Neumaier} and \cite[p.111]{drg}.

The paper is organized as follows. In Section \ref{construction}, we will give a construction to construct integrally representable trees of norm $3$. Moreover, in Section \ref{sectree}, we will show that with the construction we can construct all integrally representable trees of norm $3$. We will use Hoffman graphs as our main tool. Properties of Hoffman graphs and other definitions and preliminaries will be discussed in Section \ref{secprelim}. We will conclude the paper with a discussion about seedlings.

\section{Preliminaries}\label{secprelim}
\subsection{Graphs}
Throughout the paper we will consider only undirected graphs without loops or multiple edges. Let $G=(V(G),E(G))$ be a graph on $m$ vertices. Recall that the adjacency matrix $A$ of $G$ is an $m\times m$ matrix indexed by the vertices of $G$ such that $A_{xy}=1$ if the vertex $x$ is adjacent to the vertex $y$ and $A_{xy}=0$ otherwise. We write $x\sim y$ if the vertices $x$ and $y$ are adjacent and $x\not\sim y$ if they are not adjacent. By the eigenvalues of $G$ we mean the eigenvalues of $A$ and denote $\lambda_{\min}(G)$ the smallest eigenvalue of $G$.

Let $t$ be a positive integer. The graph $G$ is called \emph{integrally representable of norm $t$}, if there exists a map $\varphi : V(G) \rightarrow \mathbb{Z}^n$ for some positive integer $n$ such that

\[
(\varphi(x),\varphi(y) )=
\left\{
  \begin{array}{ll}
    $t$ &\text{ if $x=y$}, \\
    1 &\text{ if $x \sim y$}, \\
    0 &\text{ otherwise}. \\
  \end{array}
\right.
\]

We also call the map $\varphi$ an integral representation of $G$ of norm $t$. When $t=3$, we have $\varphi(x)_i\in\{0,\pm1\}$ for all $x\in V(G)$ and $i\in\{1,\ldots,m\}$.

Let $\varphi$ be an integral representation of $G$ of norm $t$. Assume that $G$ has vertices $x_1,...,x_m$ and let $N$ be the matrix whose ${x_i}^{th}$ column is equal to $\varphi(x_i)$ for $i=1,...,m$. Then $N^TN=A+tI$, where $A$ is the adjacency matrix of $G$. This means $\lambda_{\min}(G)\geq -t$.

A tree $T$ is a connected undirected graph without cycles. If the tree $T$ has an integral representation of norm $3$, we call it an integrally representable tree of norm $3$.

\subsection{Matrices}
 In this subsection, we will introduce two main theorems about matrices and its eigenvalues: eigenvalue interlacing and the Perron-Frobenius Theorem.
\begin{thm}(\cite[Interlacing]{H1995})\label{interlacing}
Let $A$ be a real symmetric $n\times n$  matrix with
eigenvalues $\theta_1\ge\cdots\ge \theta_n$. For some $m<n$,
let $S$ be a real $n\times m$ matrix with orthonormal columns,
$S^{T}S=I$, and consider the matrix $B=S^{T}AS$,
with eigenvalues $\mu_1\ge\cdots\ge \mu_m$. Then,
\begin{enumerate}
\item  the eigenvalues of $B$ interlace those of
    $A$, that is,
$\theta_i\ge \mu_i\ge \theta_{n-m+i},~i=1,\ldots, m;$

\item if there exists an integer $j\in \{1,2,\ldots,m\}$ such that $\theta_{i}=\mu_{i}$ for $1\leq i\leq j$ and $\theta_{n-m+i}=\mu_{i}$ for $j+1\leq i \leq m$, then the interlacing is tight and $SB=AS$.
\end{enumerate}
\end{thm}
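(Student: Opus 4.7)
The plan is to deduce both statements from the Courant--Fischer min-max characterization of eigenvalues: for a real symmetric $n\times n$ matrix $M$ with eigenvalues $\lambda_1 \ge \cdots \ge \lambda_n$,
$$\lambda_i \;=\; \max_{\dim U = i}\; \min_{0 \neq x \in U}\; \frac{x^{T}Mx}{x^{T}x} \;=\; \min_{\dim U = n-i+1}\; \max_{0 \neq x \in U}\; \frac{x^{T}Mx}{x^{T}x}.$$
The bridge between the spectra of $A$ and $B=S^{T}AS$ is that $S^{T}S=I$ makes $x\mapsto Sx$ an isometric embedding $\mathbb{R}^{m}\hookrightarrow\mathbb{R}^{n}$: any $k$-dimensional subspace $U\subseteq \mathbb{R}^{m}$ is sent to a $k$-dimensional subspace $SU\subseteq \mathbb{R}^{n}$, and the Rayleigh quotient satisfies $x^{T}Bx/x^{T}x = (Sx)^{T}A(Sx)/(Sx)^{T}(Sx)$ for every nonzero $x$.

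For part (i), to bound $\mu_i$ from above I would pick an $i$-dimensional subspace $U_i \subseteq \mathbb{R}^{m}$ on which the minimum Rayleigh quotient of $B$ is $\mu_i$; then $SU_i$ is an $i$-dimensional subspace of $\mathbb{R}^{n}$ with the same minimum Rayleigh quotient $\mu_i$ for $A$, so the max-min formula for $\theta_i$ immediately yields $\theta_i \ge \mu_i$. The lower bound $\mu_i \ge \theta_{n-m+i}$ follows by the mirror argument: pick an $(m-i+1)$-dimensional subspace of $\mathbb{R}^{m}$ realizing the dual min-max value $\mu_i$ for $B$, lift through $S$ to an $(m-i+1)$-dimensional subspace of $\mathbb{R}^{n}$, and apply the min-max expression for $\theta_{n-m+i}$, which also ranges over $(m-i+1)$-dimensional subspaces.

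For part (ii), let $v_1,\ldots,v_m$ be an orthonormal eigenbasis of $B$ with $Bv_k = \mu_k v_k$. For indices $i \le j$, I would apply the recipe of part (i) to $U_i = \mathrm{span}(v_1,\ldots,v_i)$: the hypothesis $\mu_i = \theta_i$ forces $SU_i$ to attain the max-min value $\theta_i$ of $A$, and since the minimum Rayleigh quotient of a symmetric operator on a subspace is attained at an eigenvector, an induction on $i$ (using that $SU_{i-1}$ is already spanned by eigenvectors of $A$ with eigenvalues $\theta_1,\ldots,\theta_{i-1}$) pins down $Sv_i$ itself as an eigenvector of $A$ with eigenvalue $\theta_i = \mu_i$. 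The mirror argument applied to $U_i = \mathrm{span}(v_i,\ldots,v_m)$ for $i \ge j+1$, using the dual min-max, yields the same conclusion with eigenvalue $\theta_{n-m+i} = \mu_i$. In every case $ASv_i = \mu_i Sv_i = SBv_i$, so $AS$ and $SB$ agree on the basis $\{v_1,\ldots,v_m\}$, hence $AS = SB$.

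The main obstacle will be the structural step in part (ii): from the bare fact that a certain subspace attains a Courant--Fischer extremum, one has to extract that the new basis vector $Sv_i$ is an actual eigenvector of $A$, not merely a linear combination of eigenvectors whose Rayleigh quotient happens to equal $\theta_i$. The intended remedy is to induct on $i$, exploit orthogonality of eigenspaces for distinct eigenvalues of $A$, and treat the top block $i \le j$ and the bottom block $i \ge j+1$ as mirror images of each other so that together they cover all of $\{1,\ldots,m\}$.
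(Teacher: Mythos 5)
Your proposal is correct. Note first that the paper itself offers no proof of this statement: it is quoted verbatim from Haemers' survey on interlacing and used as a black box, so the only meaningful comparison is with Haemers' own argument. Your route via Courant--Fischer is the standard textbook proof and is sound: the observation that $S^TS=I$ makes $x\mapsto Sx$ a Rayleigh-quotient-preserving isometric embedding is exactly the right bridge, and the two dual min-max formulas give the two halves of (i). Haemers' original proof is a slightly more economical repackaging of the same idea: for each $i$ he picks a single nonzero test vector $s_i\in\langle v_1,\dots,v_i\rangle\cap\langle S^Tu_1,\dots,S^Tu_{i-1}\rangle^{\perp}$ (possible by a dimension count, where the $u_k$ are eigenvectors of $A$), so that $Ss_i\perp u_1,\dots,u_{i-1}$ and Rayleigh's principle gives $\theta_i\ge s_i^TBs_i/s_i^Ts_i\ge\mu_i$ in one line; the lower bound follows by applying this to $-A$ and $-B$. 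Your version trades that dimension count for the full min-max characterization; both are legitimate, and yours has the advantage of making part (ii) more transparent. On part (ii) you correctly identified the only delicate point -- that attaining a Courant--Fischer extremum does not by itself make $Sv_i$ an eigenvector -- and your remedy works: once $Sv_1,\dots,Sv_{i-1}$ are known to be eigenvectors spanning an $A$-invariant subspace, $Sv_i$ lies in its orthogonal complement, on which the largest eigenvalue of the restriction of $A$ is $\theta_i$; since $Sv_i$ attains that value as a Rayleigh quotient it is an eigenvector of the restriction, hence of $A$. The mirror induction from $i=m$ downward handles the indices above $j$, and evaluating $AS$ and $SB$ on the basis $\{v_1,\dots,v_m\}$ finishes the claim. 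No gaps.
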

For a matrix $A$, we write $A>0$ (resp. $A\geq0$) when all its entries are positive (nonnegative). Let $A$ be a real $n\times n$ matrix with nonnegative entries. The matrix $A$ is called \emph{irreducible} if there exists an integer $p$ such that $A^p>0$. Note that the matrix $A$ is irreducible if and only if the directed graph $G_A$ with vertices $\{1,\ldots,n\}$ and edges $ij$ whenever $A_{ij}>0$ is strongly connected. Now we recall the Perron-Frobenius Theorem as follows:

 \begin{thm}(\cite[Theorem $3.1.1$]{drg})\label{perronfrobenius}
 Let $A$ be a nonnegative irreducible matrix, then the following holds:
 \begin{enumerate}
 \item There is a real number $\theta_0$ and a real vector ${\bf x}_0$ with $A{\bf x}_0=\theta_0{\bf x}_0$, $\theta_0>0,{\bf x}_0>0$. If a vector ${\bf x}\geq0,{\bf x}\neq0$ and $A{\bf x}\geq\theta {\bf x}$, then $\theta\leq\theta_0$.
 \item The eigenvalue $\theta_0$ of $A$ has geometric and algebraic multiplicity one.
 \item For each eigenvalue $\theta$ of $A$ we have $|\theta|\leq\theta_0$.
 \item If a vector ${\bf x}\geq0,{\bf x}\neq0$ and $A{\bf x}\leq\theta {\bf x}$, then ${\bf x}>0$ and $\theta\geq\theta_0$; moreover, $\theta=\theta_0$ if and only if $A{\bf x}=\theta {\bf x}$.
 \item If $0\leq B\leq A$ and $B\neq A$, then every eigenvalue $\mu$ of $B$ satisfies $|\mu|<\theta_0$.
 \end{enumerate}
 \end{thm}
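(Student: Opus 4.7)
The plan is to organize everything around the Collatz--Wielandt variational characterization
\[
\theta_0 \;=\; \sup\Bigl\{ \theta \ge 0 \;:\; \exists\, {\bf x}\ge 0,\ {\bf x}\neq 0,\ A{\bf x}\ge \theta\,{\bf x}\Bigr\}.
\]
First I would show this supremum is attained and that any maximizer is strictly positive. On the compact simplex $\{{\bf x}\ge 0 : \sum_i x_i=1\}$, the function ${\bf x}\mapsto \min_{i:\,x_i>0}(A{\bf x})_i/x_i$ is upper semicontinuous, hence attains its maximum at some ${\bf x}_0$. If some coordinate of ${\bf x}_0$ vanished, irreducibility ($A^p>0$ for some $p$) would give $(I+A)^{n-1}{\bf x}_0>0$, and replacing ${\bf x}_0$ by this strictly positive vector would produce a strictly larger value of the ratio, contradicting maximality. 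Hence ${\bf x}_0>0$ and $A{\bf x}_0=\theta_0{\bf x}_0$, with $\theta_0>0$ since $A\neq 0$; this establishes part (i).

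For (ii), the geometric multiplicity is handled first: if $A{\bf y}=\theta_0{\bf y}$ with ${\bf y}\neq 0$ real, choose a scalar $c$ so that ${\bf x}_0-c{\bf y}\ge 0$ has at least one zero coordinate; then the $(I+A)^{n-1}$ argument forces ${\bf x}_0-c{\bf y}=0$, so ${\bf y}$ is a multiple of ${\bf x}_0$. The algebraic multiplicity comes by applying the same reasoning to $A^T$ to obtain a positive left eigenvector ${\bf w}_0$; since ${\bf w}_0^T{\bf x}_0>0$, no vector in $\ker(A-\theta_0 I)^2\setminus\ker(A-\theta_0 I)$ can exist.

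The central technical step is (iv): given $A{\bf x}\le\theta\,{\bf x}$ with ${\bf x}\ge 0,\,{\bf x}\neq 0$, pair both sides with the positive left eigenvector ${\bf w}_0$ to get $\theta_0\,{\bf w}_0^T{\bf x}\le \theta\,{\bf w}_0^T{\bf x}$, and since ${\bf w}_0>0$ forces ${\bf w}_0^T{\bf x}>0$, we conclude $\theta\ge\theta_0$. Strict positivity ${\bf x}>0$ and the case of equality again follow from the $(I+A)^{n-1}$ trick. Part (iii) is then a direct consequence: for any complex eigenpair $A{\bf z}=\theta\,{\bf z}$ the triangle inequality gives $A|{\bf z}|\ge|\theta|\,|{\bf z}|$, and the Collatz--Wielandt formulation in (i) yields $|\theta|\le\theta_0$. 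For (v), use the positive Perron vector ${\bf x}_0$ of $A$: $B{\bf x}_0\le A{\bf x}_0=\theta_0{\bf x}_0$, with strict inequality in at least one coordinate (since $B\neq A$ and ${\bf x}_0>0$), so sharpening (iv) slightly yields that every eigenvalue $\mu$ of $B$ satisfies $|\mu|<\theta_0$.

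The main obstacle I anticipate is making the strict inequality in (v) airtight when $B$ itself fails to be irreducible, so that (iv) is not literally available for $B$. The clean fix is either (a) to perturb to $B_\varepsilon=B+\varepsilon J$ with $J$ the all-ones matrix, apply the irreducible theory to $B_\varepsilon<A$, and take $\varepsilon\to 0$ using continuity of eigenvalues; or (b) to use the sharp scalar inequality ${\bf w}_0^T(A-B){\bf x}_0>0$ directly, which forces ${\bf w}_0^TB{\bf x}_0<\theta_0\,{\bf w}_0^T{\bf x}_0$ and then, combined with the inequality ${\bf w}_0^T B|{\bf z}|\ge|\mu|\,{\bf w}_0^T|{\bf z}|$ derived from any complex eigenpair of $B$, delivers $|\mu|<\theta_0$.
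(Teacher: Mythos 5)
The paper does not prove this statement at all: it is the classical Perron--Frobenius theorem, quoted from \cite[Theorem 3.1.1]{drg} and used as a black box, so there is no internal proof to compare against. Your Collatz--Wielandt argument is the standard textbook proof and is essentially sound. The upper semicontinuity claim does hold (near a boundary point of the simplex the minimum is taken over at least as many indices as at the point itself, so the ratio can only jump up in the limit); the $(I+A)^{n-1}$ device correctly upgrades $A\mathbf{x}_0\ge\theta_0\mathbf{x}_0$ to equality and then to $\mathbf{x}_0>0$; and pairing with the positive left eigenvector $\mathbf{w}_0$ disposes of (ii), (iv), and the non-strict parts of (iii) and (v) exactly as you say.

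The one place needing repair is the strict inequality in (v). Your alternative (a) does not work as stated: $B+\varepsilon J$ need not remain $\le A$ when $B$ and $A$ agree in some entries, and in any case a limiting argument can only deliver the non-strict bound $|\mu|\le\theta_0$. Alternative (b) is the right route, but it needs one more step than you give. Suppose $|\mu|=\theta_0$ for some eigenvalue $\mu$ of $B$ with eigenvector $\mathbf{z}$. The triangle inequality gives $B|\mathbf{z}|\ge|\mu|\,|\mathbf{z}|$, so
\[
\theta_0\,\mathbf{w}_0^T|\mathbf{z}| \;=\; |\mu|\,\mathbf{w}_0^T|\mathbf{z}| \;\le\; \mathbf{w}_0^T B|\mathbf{z}| \;\le\; \mathbf{w}_0^T A|\mathbf{z}| \;=\;\theta_0\,\mathbf{w}_0^T|\mathbf{z}|,
\]
forcing equality throughout; since $\mathbf{w}_0>0$ this yields $B|\mathbf{z}|=\theta_0|\mathbf{z}|=A|\mathbf{z}|$. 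Part (iv) applied to $A$ then gives $|\mathbf{z}|>0$, and $(A-B)|\mathbf{z}|=0$ with $A-B\ge 0$ and $|\mathbf{z}|>0$ forces $A=B$, a contradiction. With that patch the proposal is complete.
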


\subsection{Hoffman graphs}
Now we are in the position to give definitions and preliminaries of Hoffman graphs. Hoffman graphs were introduced by Woo and Neumaier \cite{woo}. Most of the material of this section comes from \cite{Koolen} and \cite{woo}.

\begin{de} A Hoffman graph $\mathfrak{h}$ is a pair $(H,l)$ where $H=(V,E)$ is a graph and $l:V \rightarrow \{f,s\}$ is a labeling map satisfying the following conditions:
\begin{enumerate}
\item every vertex with label $f$ is adjacent to at least one vertex with label $s$;
\item  vertices with label $f$ are pairwise non-adjacent.
\end{enumerate}
\end{de}
We call a vertex with label $s$ a \emph{slim vertex}, and a vertex with label $f$ a \emph{fat vertex}. We denote
by $V_s(\mathfrak{h})$ (resp. $V_f(\mathfrak{h})$) the set of slim (resp. fat) vertices of $\mathfrak{h}$.

\vspace{0.1cm}
For a vertex $x$ of $\mathfrak{h}$, we define $N_{\mathfrak{h}}^{s}(x)$ (resp. $N_{\mathfrak{h}}^{f}(x)$) the set of slim (resp. fat) neighbors of $x$ in $\mathfrak{h}$.
If every slim vertex of the Hoffman graph $\mathfrak{h}$ has a fat neighbor, then we call $\mathfrak{h}$ \emph{fat}. In a similar fashion, we define $N^{f}_\mathfrak{h}(x_1,x_2)$ to be the set of common fat neighbors of two slim vertices $x_1$ and $x_2$ in $\mathfrak{h}$ and $N^{s}_\mathfrak{h}(f_1,f_2)$ to be the set of common slim neighbors of two fat vertices $f_1$ and $f_2$ in $\mathfrak{h}$.

\vspace{0.1cm}
The \emph{slim graph} of the Hoffman graph $\mathfrak{h}$ is the subgraph of $H$ induced on $V_s(\mathfrak{h})$.

\vspace{0.1cm}
A Hoffman graph $\mathfrak{h_1 }= (H_1, l_1)$ is called an \emph{induced Hoffman subgraph} of $\mathfrak{h}=(H, l)$, if $H_1$ is an induced subgraph of $H$ and $l_1(x) = l(x)$ holds for all vertices $x$ of $H_1$.

\vspace{0.1cm}
Let $W$ be a subset of $V_s(\mathfrak{h})$. An \emph{induced Hoffman subgraph of $\mathfrak{h}$ generated by $W$}, denoted by $\langle W\rangle_{\mathfrak{h}}$, is the Hoffman subgraph of $\mathfrak{h}$ induced by $W \cup\{f\in V_f(\mathfrak{h})~|f \sim w \text{ for some }w\in W \}$.

\vspace{0.1cm}
Note that any graph can be considered as a Hoffman graph with only slim vertices, and vice versa. We will not distinguish between Hoffman graphs with only slim vertices and graphs.
\begin{de}
A Hoffman graph $\mathfrak{t}= (T, l)$ is called \emph{tree-like} if the graph $T$ is a tree.
\end{de}

\begin{de} Two Hoffman graphs $\mathfrak{h}=(H, l)$ and $\mathfrak{h}'=(H',l')$ are \emph{isomorphic} if there exists an isomorphism $\sigma:H\rightarrow H'$ such that $\sigma$ preserves the labeling.
\end{de}

\begin{de} For a Hoffman graph $\mathfrak{h}=(H,l)$, let $A$ be the adjacency matrix of $H$
\begin{eqnarray*}
A=\left(
\begin{array}{cc}
A_s  & C\\
C^{T}  & O
\end{array}
\right)
\end{eqnarray*}
in a labeling in which the fat vertices come last. The \emph{special matrix} $Sp(\mathfrak{h})$ of $\mathfrak{h}$ is the real symmetric matrix $Sp(\mathfrak{h}):=A_s-CC^{T}.$ The eigenvalues of $\mathfrak{h}$ are the eigenvalues of $Sp(\mathfrak{h})$. We denote $\lambda_{\min}(\mathfrak{h})$ the smallest eigenvalue of $\mathfrak{h}$.
\end{de}

\begin{de} Let $\mathfrak{h}$ be a Hoffman graph and let $n$ be a positive integer. A mapping $\phi : V(\mathfrak{h}) \rightarrow \mathbb{R}^{n}$ satisfying
\[(\phi(x),\phi(y))=
\begin{cases}
t &\text{if $x=y\in V_{s}(\mathfrak{h})$,}\\
1 &\text{if $x=y\in V_{f}(\mathfrak{h})$,}\\
1 &\text{if $x$ and $y$ are adjacent,}\\
0 &\text{otherwise,}
\end{cases}
\]
is called a \emph{representation of norm $t$}, where $(,)$ is the standard inner product on $\mathbb{R}^{n}.$ We denote by $\Lambda(\frak{h},t)$ the lattice generated by $\{\phi(x)\mid x \in V(\frak{h})\}$. Note that the isomorphism class of $\Lambda(\frak{h},t)$ depends only on $t$, and is independent of $\phi$, justifying the notation.
\end{de}

\begin{de}\label{reducedrep}
Let $\mathfrak{h}$ be a Hoffman graph and let $n$ be a positive integers. A mapping $\psi : V_s(\mathfrak{h}) \rightarrow \mathbb{R}^{n}$ satisfying
\[(\psi(x),\psi(y))=
\begin{cases}
t-|N^{f}_{\mathfrak{h}}(x)| &\text{if $x=y$,}\\
1-|N^{f}_{\mathfrak{h}}(x,y)| &\text{if $x \sim y$,}\\
-|N^{f}_{\mathfrak{h}}(x,y)| &\text{otherwise,}
\end{cases}
\]
is called a \emph{reduced representation of norm $t$}, where $(,)$ is the standard inner product on $\mathbb{R}^{n}$. We denote by $\Lambda^{red}(\frak{h},t)$ the lattice generated by $\{\psi(x)\mid x \in V_s(\frak{h})\}$. Note that the isomorphism class of $\Lambda^{red}(\frak{h},t)$ depends only on $t$, and is independent of $\psi$, justifying the notation.
\end{de}
\begin{lem}(\cite[Theorem $2.8$]{Koolen})\label{relation}
For a Hoffman graph $\mathfrak{h}$, the following conditions are equivalent:
\begin{enumerate}
\item $\mathfrak{h}$ has a representation of norm $t$;
\item $\mathfrak{h}$ has a reduced representation of norm $t$;
\item $\lambda_{\min}(\mathfrak{h})\geq -t$.
\end{enumerate}
\end{lem}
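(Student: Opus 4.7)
The plan is to establish the equivalence by first showing $(2)\Leftrightarrow(3)$ directly from the definition of a reduced representation, and then $(1)\Leftrightarrow(2)$ by a Schur complement argument; the whole proof is essentially one linear-algebraic observation about $Sp(\mathfrak{h})$.

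For $(2)\Leftrightarrow(3)$, I would observe that a reduced representation of norm $t$ exists precisely when the prescribed inner-product matrix---with diagonal entries $t-|N^{f}_{\mathfrak{h}}(x)|$ and off-diagonal entries $(A_s)_{xy}-|N^{f}_{\mathfrak{h}}(x,y)|$---is realized as a Gram matrix of real vectors. Letting $C$ be the slim--fat incidence matrix of $\mathfrak{h}$, one has $|N^{f}_{\mathfrak{h}}(x)|=(CC^T)_{xx}$ and $|N^{f}_{\mathfrak{h}}(x,y)|=(CC^T)_{xy}$, so this matrix equals $tI+A_s-CC^T=tI+Sp(\mathfrak{h})$. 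A real symmetric matrix is a Gram matrix over $\mathbb{R}$ if and only if it is positive semidefinite, so $(2)$ holds if and only if $tI+Sp(\mathfrak{h})\succeq 0$, which by definition of $\lambda_{\min}(\mathfrak{h})$ is exactly $\lambda_{\min}(\mathfrak{h})\geq -t$, i.e., $(3)$.

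For $(1)\Leftrightarrow(2)$, a representation of norm $t$ is by the same Gram matrix principle equivalent to positive semidefiniteness of
\[
G=\begin{pmatrix} tI+A_s & C \\ C^T & I \end{pmatrix}
\]
(with fat vertices ordered last, using that distinct fat vertices are non-adjacent in a Hoffman graph). Since the bottom-right block is the identity, the standard Schur complement criterion gives $G\succeq 0$ if and only if its Schur complement $tI+A_s-C\cdot I^{-1}\cdot C^T=tI+Sp(\mathfrak{h})$ is positive semidefinite, which is condition $(2)$. Alternatively one can give an explicit construction: from a reduced representation $\psi$, append one new coordinate per fat vertex and set $\phi(f)=e_f$ together with $\phi(x)=\psi(x)+\sum_{f\in N^{f}_{\mathfrak{h}}(x)} e_f$; conversely, given $\phi$, take $\psi(x)$ to be the orthogonal projection of $\phi(x)$ onto $\mathrm{span}\{\phi(f):f\in V_f(\mathfrak{h})\}^{\perp}$, using that the $\phi(f)$ form an orthonormal set by definition.

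I expect no real obstacle here: the key recognition is that $Sp(\mathfrak{h})$ was defined exactly so as to be the Schur complement of the fat block in the full Gram matrix of a (putative) representation, and once this is noted, all three conditions collapse to the single statement $tI+Sp(\mathfrak{h})\succeq 0$.
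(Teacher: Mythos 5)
Your argument is correct. Note that the paper does not prove this statement itself --- it is quoted from \cite{Koolen} --- but your identification of $tI+Sp(\mathfrak{h})$ as the Gram matrix of a reduced representation and as the Schur complement of the fat block in the Gram matrix of a full representation is exactly the standard argument, and your explicit passage between $\phi$ and $\psi$ (namely $\phi(x)=\psi(x)+\sum_{f\in N^{f}_{\mathfrak{h}}(x)}{\bf e}_f$ with orthonormal $\phi(f)={\bf e}_f$, and conversely orthogonal projection away from the fat vectors) is precisely the construction the paper itself invokes via Lemma \ref{reprerelation} and uses throughout Section \ref{secprelim}. No gaps.
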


The following lemma shows how to construct a reduced representation of norm $t$ from a given representation of norm $t$.

\begin{lem}(\cite[Lemma $2.7$]{Koolen})\label{reprerelation}
Let $\mathfrak{h}$ be a Hoffman graph having a representation of norm $t$. Then $\mathfrak{h}$ has a reduced representation of norm $t$, and $\Lambda(\frak{h},t)$ is isomorphic to $\Lambda^{red}(\frak{h},t)\bigoplus\mathbb{Z}^{|V_f(\mathfrak{h})|}$ as a lattice.
\end{lem}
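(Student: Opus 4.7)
The plan is to build $\psi$ directly from $\phi$ by projecting away the fat contribution. The key structural fact is that, because fat vertices of a Hoffman graph are pairwise non-adjacent by definition, the set $\{\phi(f) : f \in V_f(\mathfrak{h})\}$ is an orthonormal system in $\mathbb{R}^n$, and for any slim vertex $x$ the inner product $(\phi(x),\phi(f))$ equals $1$ when $f \in N^f_{\mathfrak{h}}(x)$ and $0$ otherwise. Accordingly, I would define
$$\psi(x) := \phi(x) - \sum_{f \in N^f_{\mathfrak{h}}(x)} \phi(f)$$
for every $x \in V_s(\mathfrak{h})$, which is exactly the orthogonal projection of $\phi(x)$ onto the orthogonal complement of $\mathrm{span}\{\phi(f) : f \in V_f(\mathfrak{h})\}$.

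Next I would verify that $\psi$ satisfies the three inner-product conditions of Definition \ref{reducedrep}. A direct expansion of $(\psi(x), \psi(y))$, using the orthonormality of the fat images, yields
$$(\psi(x),\psi(y)) = (\phi(x),\phi(y)) - |N^f_{\mathfrak{h}}(x,y)|,$$
since the two cross terms each contribute $-|N^f_{\mathfrak{h}}(x,y)|$ while the double sum over pairs of common fat neighbors adds back $+|N^f_{\mathfrak{h}}(x,y)|$. Substituting the three cases $x=y$, $x \sim y$, and $x \neq y$ with $x \not\sim y$ from the definition of $\phi$ then recovers the prescribed values $t-|N^f_{\mathfrak{h}}(x)|$, $1-|N^f_{\mathfrak{h}}(x,y)|$, and $-|N^f_{\mathfrak{h}}(x,y)|$ respectively, so $\psi$ is a reduced representation of norm $t$.

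For the lattice statement, I would replace the generating set $\{\phi(x) : x \in V_s(\mathfrak{h})\} \cup \{\phi(f) : f \in V_f(\mathfrak{h})\}$ of $\Lambda(\mathfrak{h},t)$ by the set $\{\psi(x) : x \in V_s(\mathfrak{h})\} \cup \{\phi(f) : f \in V_f(\mathfrak{h})\}$, using the relation $\phi(x) = \psi(x) + \sum_{f \in N^f_{\mathfrak{h}}(x)} \phi(f)$. The change-of-generators transformation between the two sets is unimodular (upper triangular over $\mathbb{Z}$ with $1$'s on the diagonal), so the two sets generate the same integer lattice. Since each $\psi(x)$ lies in $\mathrm{span}\{\phi(f) : f \in V_f(\mathfrak{h})\}^{\perp}$ while the $\phi(f)$ form an orthonormal $\mathbb{Z}$-basis of a copy of $\mathbb{Z}^{|V_f(\mathfrak{h})|}$, the lattice splits as the orthogonal direct sum $\Lambda^{red}(\mathfrak{h},t) \oplus \mathbb{Z}^{|V_f(\mathfrak{h})|}$.

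I do not anticipate a serious obstacle; the whole argument rests on the single observation that the fat vertex images are orthonormal, which is forced by the Hoffman-graph axiom forbidding adjacency among fat vertices. The only genuine bookkeeping is collecting the cross terms correctly when expanding $(\psi(x),\psi(y))$ and noting that the triangular change of basis is integer-invertible, so that the identification of the two lattices is an isomorphism rather than merely an inclusion.
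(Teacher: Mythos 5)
Your proof is correct, and it is essentially the canonical argument: the paper states this lemma as a citation to \cite[Lemma 2.7]{Koolen} without reproducing a proof, but the decomposition $\phi(x)=\psi(x)+\sum_{f\in N_{\mathfrak{h}}^f(x)}\mathbf{e}_f$ that the paper uses later is exactly your construction, with the fat images normalized to standard basis vectors. The orthonormality of the fat images, the inner-product bookkeeping $(\psi(x),\psi(y))=(\phi(x),\phi(y))-|N^f_{\mathfrak{h}}(x,y)|$, and the unimodular change of generators are all as in the cited source, so there is nothing to correct.
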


A Hoffman graph $\mathfrak{h}$ is called integrally representable of norm $t$, if $\mathfrak{h}$ has an integral represention $\phi: V(\mathfrak{h}) \rightarrow \mathbb{Z}^{n}$ of norm $t$ for some $n$. From Lemma \ref{reprerelation}, we find that a Hoffman graph $\mathfrak{h}$ is integrally representable of norm $t$ is also equivalent to that $\mathfrak{h}$ has an integral reduced representation $\psi : V_s(\mathfrak{h}) \rightarrow \mathbb{Z}^{n}$ of norm $t$ for some $n$.

An \emph{edge-signed graph} $\mathcal{S}$ is a pair $(S,\text{sgn})$ of a graph $S$ and a map $\text{sgn}: E(S)\rightarrow \{+,-\}$. Let $V(\mathcal{S})=V(S)$, $E^+(\mathcal{S}):=\text{sgn}^{-1}(+)$ and $E^-(\mathcal{S}):=\text{sgn}^{-1}(-)$. Each element in $E^+(\mathcal{S})$ (resp. $E^-(\mathcal{S})$) is called a $(+)$-edge (resp. $(-)$-edge) of $\mathcal{S}$. We represent an edge-signed graph $\mathcal{S}$ also by the triple $(V(\mathcal{S}), E^+(\mathcal{S}), E^-(\mathcal{S}))$.

For an edge-signed graph $\mathcal{S}$, we define its \emph{signed adjacency matrix} $B(\mathcal{S})$ by

\[B(\mathcal{S})_{xy}=
\begin{cases}
1 &\text{if $xy\in E^{+}(\mathcal{S})$,}\\
-1 &\text{if $xy\in E^{-}(\mathcal{S})$,}\\
0 &\text{otherwise.}
\end{cases}
\]

A \emph{switching} at vertex $x$ is swapping the sign of each edge incident to $x$. Two edge-signed graphs $\mathcal{S}$ and $\mathcal{S}'$ are called \emph{switching  equivalent}, if there exists a subset $U\subset V(\mathcal{S})$ such that $\mathcal{S}'$ is isomorphic to the edge-signed graph obtained from $\mathcal{S}$ by switching at each vertex in $U$. Note that switching equivalence is an equivalence relation that preserves eigenvalues.

Let $\psi$ be a reduced representation of norm $t$ of $\mathfrak{h}$. The \emph{special graph} of $\mathfrak{h}$ is the edge-signed graph $\mathcal{S}(\mathfrak{h}):=(V(\mathcal{S}(\mathfrak{h})), E^+(\mathcal{S}(\mathfrak{h})), E^-(\mathcal{S}(\mathfrak{h}))),$ where $V(\mathcal{S}(\mathfrak{h}))=V_s(\mathfrak{h})$ and
 $$E^+(\mathcal{S}(\mathfrak{h}))=\big\{\{x,y\}\mid x,y\in V_s(\mathfrak{h}),x\neq y,\text{sgn}(\psi(x),\psi(y))=+\big\},$$
 $$E^-(\mathcal{S}(\mathfrak{h}))=\big\{\{x,y\}\mid x,y\in V_s(\mathfrak{h}),x\neq y,\text{sgn}(\psi(x),\psi(y))=-\big\}.$$

The \emph{special $\varepsilon$-graph} of $\mathfrak{h}$ is the graph $S^\epsilon(\mathfrak{h})=(V_s(\mathfrak{h}),E^\epsilon(\mathcal{S}(\mathfrak{h})))$ for $\epsilon\in\{+,-\}$.

Now, we introduce \emph{direct sums} of Hoffman graphs.

\begin{de}\label{directsummatrix}
Let $\mathfrak{h}^1$ and $\mathfrak{h}^2$ be two Hoffman graphs. We call a Hoffman graph $\mathfrak{h}$ the direct sum of $\mathfrak{h}^1$ and $\mathfrak{h}^2$, denoted by $\mathfrak{h} = \mathfrak{h}^1 \oplus \mathfrak{h}^2$, if $\mathfrak{h}$ satisfies the following condition:

There exists a partition $\big\{V_s^1(\mathfrak{h}),V_s^2(\mathfrak{h})\big\}$ of $V_s(\mathfrak{h})$ such that induced Hoffman subgraphs generated by $V_s^i(\mathfrak{h})$ are $\mathfrak{h}^i$ for $i=1,2$ and
$Sp(\mathfrak{h})=
\begin{pmatrix}
Sp(\mathfrak{h}^1) & 0 \\
0 & Sp(\mathfrak{h}^2)
\end{pmatrix}
$ with respect to the partition $\big\{V_s^1(\mathfrak{h}),V_s^2(\mathfrak{h})\big\}$ of $V_s(\mathfrak{h})$.
\end{de}

Clearly, by definition, the direct sum is associative, so that the sum $\oplus_{i=1}^{r}\mathfrak{h}^i$ is well-defined. We can check that $\mathfrak{h}$ is a direct sum of two non-empty Hoffman graphs if and only if $Sp(\mathfrak{h})$ is a block matrix with at least $2$ blocks. If $\mathfrak{h} = \mathfrak{h}^1 \oplus \mathfrak{h}^2$ for some non-empty Hoffman subgraphs $\mathfrak{h}^1$ and $\mathfrak{h}^2$, then we call $\mathfrak{h}$ \emph{decomposable}. Otherwise, $\mathfrak{h}$ is called \emph{indecomposable}.

The following lemma gives a combinatorial interpretation of a direct sum of two Hoffman graphs.

\begin{lem}\label{directsumcombi}
Let $\mathfrak{h}$ be a Hoffman graph and $\mathfrak{h}^1$ and $\mathfrak{h}^2$ be two induced Hoffman subgraphs of $\mathfrak{h}$. The Hoffman graph $\mathfrak{h}$ is the direct sum of $\mathfrak{h}^1$ and $\mathfrak{h}^2$ if and only if $\mathfrak{h}^1$, $\mathfrak{h}^2$, and $\mathfrak{h}$ satisfy the following conditions:
\begin{enumerate}
\item $V(\mathfrak{h})=V(\mathfrak{h}^1)\cup V(\mathfrak{h}^2);$

\item $\big\{V_s(\mathfrak{h}^1),V_s(\mathfrak{h}^2)\big\}$ is a partition of $V_s(\mathfrak{h});$

\item if $x \in V_s(\mathfrak{h}^i),~f \in V_f(\mathfrak{h})$ and $x\sim f$, then $f\in V_f(\mathfrak{h}^i);$

\item if $x \in V_s(\mathfrak{h}^1)$ and $y \in V_s(\mathfrak{h}^2)$, then $x$ and $y$ have at most one common fat neighbor, and they have one if and only if they are adjacent.
\end{enumerate}
\end{lem}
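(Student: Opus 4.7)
The plan is to reduce everything to a direct calculation of the off-diagonal entries of $Sp(\mathfrak{h})$. Recall that for distinct slim vertices $x,y$, the entry $Sp(\mathfrak{h})_{xy} = (A_s)_{xy} - (CC^T)_{xy} = [x\sim y]-|N^f_{\mathfrak{h}}(x,y)|$, where $[x\sim y]$ is $1$ if $x,y$ are adjacent and $0$ otherwise. So requiring $Sp(\mathfrak{h})$ to be block-diagonal with respect to a partition $\{V_s^1,V_s^2\}$ of $V_s(\mathfrak{h})$ is literally the statement that for every $x\in V_s^1$, $y\in V_s^2$, either $x\not\sim y$ and $|N^f_{\mathfrak{h}}(x,y)|=0$, or $x\sim y$ and $|N^f_{\mathfrak{h}}(x,y)|=1$. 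This is exactly condition (iv).

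For the forward direction, I would assume $\mathfrak{h}=\mathfrak{h}^1\oplus\mathfrak{h}^2$ witnessed by the partition $\{V_s^1,V_s^2\}$ with $\mathfrak{h}^i=\langle V_s^i\rangle_{\mathfrak{h}}$. Condition (ii) is then immediate since $V_s(\mathfrak{h}^i)=V_s^i$. Condition (iii) follows directly from the definition of the generated induced Hoffman subgraph: any fat neighbor in $\mathfrak{h}$ of $x\in V_s^i$ lies in $V_f(\mathfrak{h}^i)$. Condition (i) requires a small extra observation: every fat vertex of $\mathfrak{h}$ is, by the first Hoffman graph axiom, adjacent to at least one slim vertex, which lies in $V_s^1$ or $V_s^2$, so by (iii) it belongs to $V_f(\mathfrak{h}^1)\cup V_f(\mathfrak{h}^2)$. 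Condition (iv) is the block-diagonal condition read through the entry formula above.

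For the reverse direction, I would take the partition $\{V_s(\mathfrak{h}^1),V_s(\mathfrak{h}^2)\}$, which is valid by (ii), and verify the two requirements in Definition \ref{directsummatrix}. To see $\langle V_s(\mathfrak{h}^i)\rangle_{\mathfrak{h}}=\mathfrak{h}^i$: the slim parts agree; condition (iii) shows every fat neighbor in $\mathfrak{h}$ of a vertex in $V_s(\mathfrak{h}^i)$ is contained in $V_f(\mathfrak{h}^i)$, while conversely every fat vertex in $V_f(\mathfrak{h}^i)$ is (again by the first Hoffman graph axiom, applied inside the induced Hoffman subgraph $\mathfrak{h}^i$) adjacent to some slim vertex of $\mathfrak{h}^i$, hence it is a fat neighbor of $V_s(\mathfrak{h}^i)$ in $\mathfrak{h}$. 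Edges in both the fat-slim and slim-slim parts match because $\mathfrak{h}^i$ is an induced Hoffman subgraph. Finally the block-diagonal structure of $Sp(\mathfrak{h})$ follows from (iv) via the same entry formula: in each of the two cases of (iv) we get $Sp(\mathfrak{h})_{xy}=0$ for $x\in V_s(\mathfrak{h}^1)$, $y\in V_s(\mathfrak{h}^2)$.

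No step is genuinely hard; the only mild subtlety, and the place I would be most careful, is matching the fat parts in the identification $\langle V_s(\mathfrak{h}^i)\rangle_{\mathfrak{h}}=\mathfrak{h}^i$, since the generated Hoffman subgraph sweeps in \emph{all} fat neighbors in $\mathfrak{h}$ of $V_s(\mathfrak{h}^i)$, so one needs both condition (iii) and the fat-vertex axiom of Hoffman graphs to get the two inclusions. Everything else is bookkeeping around the explicit formula for the entries of $Sp(\mathfrak{h})$.
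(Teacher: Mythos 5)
Your argument is correct and complete; the paper itself states Lemma~\ref{directsumcombi} without proof, treating it as a routine consequence of Definition~\ref{directsummatrix}, and your verification via the entry formula $Sp(\mathfrak{h})_{xy}=[x\sim y]-|N^f_{\mathfrak{h}}(x,y)|$ together with the identification $\langle V_s(\mathfrak{h}^i)\rangle_{\mathfrak{h}}=\mathfrak{h}^i$ is exactly the intended bookkeeping. You correctly isolate the one genuine point of care (matching the fat parts using condition (iii) in one direction and the fat-vertex axiom in the other), which also shows that the diagonal blocks of $Sp(\mathfrak{h})$ really are $Sp(\mathfrak{h}^i)$ and not just some principal submatrices.
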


\begin{lem}(\cite[Lemma $3.4$]{Koolen})\label{connected}
A Hoffman graph $\mathfrak{h}$ is indecomposable if and only if its special graph $\mathcal{S}(\mathfrak{h})$ is connected.
\end{lem}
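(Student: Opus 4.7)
The plan is to match the two notions through the Gram-matrix structure of the reduced representation $\psi$. First I would observe that for distinct slim vertices $x,y$, the off-diagonal entry $Sp(\mathfrak{h})_{x,y}$ of the special matrix is exactly $(\psi(x),\psi(y))$: comparing Definition \ref{reducedrep} term-by-term with the formula $Sp(\mathfrak{h}) = A_s - CC^T$ (noting that $(CC^T)_{x,y} = |N^f_\mathfrak{h}(x,y)|$) makes this transparent. Consequently $\{x,y\}$ is an edge of $\mathcal{S}(\mathfrak{h})$ precisely when $Sp(\mathfrak{h})_{x,y} \neq 0$, so the underlying graph of $\mathcal{S}(\mathfrak{h})$ is exactly the ``off-diagonal support graph'' of the symmetric matrix $Sp(\mathfrak{h})$.

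For the forward direction I would argue as follows: if $\mathfrak{h} = \mathfrak{h}^1 \oplus \mathfrak{h}^2$, then by Definition \ref{directsummatrix} the matrix $Sp(\mathfrak{h})$ is block diagonal with respect to the partition $\{V_s(\mathfrak{h}^1), V_s(\mathfrak{h}^2)\}$. In particular no off-diagonal entry crossing the partition is nonzero, so no edge of $\mathcal{S}(\mathfrak{h})$ crosses it, and $\mathcal{S}(\mathfrak{h})$ is disconnected.

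For the converse, assuming $\mathcal{S}(\mathfrak{h})$ is disconnected, I would pick a partition $V_s(\mathfrak{h}) = V_1 \cup V_2$ into nonempty classes with no crossing edges and set $\mathfrak{h}^i := \langle V_i \rangle_\mathfrak{h}$. Then I would verify the four conditions of Lemma \ref{directsumcombi}. Condition (ii) holds by construction, condition (iii) is built into the definition of the induced Hoffman subgraph generated by $V_i$, and condition (i) follows because every fat vertex of $\mathfrak{h}$ has at least one slim neighbor, so it belongs to $V_f(\mathfrak{h}^1) \cup V_f(\mathfrak{h}^2)$. The interesting step is condition (iv): for $x \in V_1$ and $y \in V_2$ we have $(\psi(x),\psi(y))=0$, and inserting this into Definition \ref{reducedrep} forces $|N^f_\mathfrak{h}(x,y)|=0$ when $x \not\sim y$ and $|N^f_\mathfrak{h}(x,y)|=1$ when $x \sim y$ — which is exactly what (iv) asks.

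The hard part, to the extent there is one, will be the converse direction, and specifically the translation from orthogonality of $\psi$-vectors across the partition into the combinatorial statement (iv) about shared fat neighbors. Once that translation is made, the rest is routine bookkeeping against Lemma \ref{directsumcombi}, and the invocation of the Hoffman-graph axiom that every fat vertex has a slim neighbor handles condition (i) cleanly.
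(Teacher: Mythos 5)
Your proposal is correct, and it is the natural argument: the paper itself gives no proof of this lemma (it is quoted from \cite[Lemma~3.4]{Koolen}), so there is nothing to diverge from. The key identification $Sp(\mathfrak{h})_{x,y}=(\psi(x),\psi(y))$ for $x\neq y$, hence ``edge of $\mathcal{S}(\mathfrak{h})$ $\Leftrightarrow$ nonzero off-diagonal entry of $Sp(\mathfrak{h})$,'' is exactly right, and your verification of the four conditions of Lemma~\ref{directsumcombi} in the converse direction (in particular deducing condition (iv) from $(\psi(x),\psi(y))=0$ via Definition~\ref{reducedrep}) is complete; one could equally well check Definition~\ref{directsummatrix} directly by noting that $N^f_{\langle V_i\rangle_{\mathfrak{h}}}(x)=N^f_{\mathfrak{h}}(x)$ for $x\in V_i$, so the diagonal blocks of $Sp(\mathfrak{h})$ are the special matrices of the generated subgraphs.
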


\begin{lem}(\cite[Lemma $2.12$]{Koolen})
Suppose Hoffman graph $\mathfrak{h}$ is the direct sum of Hoffman graphs $\mathfrak{h}^1$ and $\mathfrak{h}^2$, that is $\mathfrak{h} = \mathfrak{h}^1 \oplus \mathfrak{h}^2$, then $\lambda_{\min}=\min\{\lambda_{\min}(\mathfrak{h}^1), \lambda_{\min}(\mathfrak{h}^2)\}$.
\end{lem}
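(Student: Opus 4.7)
The plan is to read the statement directly from the block-diagonal description of the special matrix given in Definition~\ref{directsummatrix}, together with the standard fact that the spectrum of a block-diagonal real symmetric matrix is the disjoint union (with multiplicities) of the spectra of its diagonal blocks.

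First, I would invoke Definition~\ref{directsummatrix}: since $\mathfrak{h} = \mathfrak{h}^1 \oplus \mathfrak{h}^2$, there is a partition $\{V_s^1(\mathfrak{h}), V_s^2(\mathfrak{h})\}$ of $V_s(\mathfrak{h})$ with respect to which
\begin{equation*}
Sp(\mathfrak{h}) = \begin{pmatrix} Sp(\mathfrak{h}^1) & 0 \\ 0 & Sp(\mathfrak{h}^2) \end{pmatrix}.
\end{equation*}
By the definition of the eigenvalues of a Hoffman graph, $\lambda_{\min}(\mathfrak{h})$ is the smallest eigenvalue of $Sp(\mathfrak{h})$, and similarly $\lambda_{\min}(\mathfrak{h}^i)$ is the smallest eigenvalue of $Sp(\mathfrak{h}^i)$ for $i=1,2$.

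Next, I would use the elementary linear-algebra fact that for a real symmetric block-diagonal matrix $M = \mathrm{diag}(M_1, M_2)$, the characteristic polynomial factorizes as the product of the characteristic polynomials of $M_1$ and $M_2$; equivalently, any eigenvector of $M_i$ with eigenvalue $\mu$ extends by zero to an eigenvector of $M$ with the same eigenvalue, and conversely the nonzero block-components of any eigenvector of $M$ are eigenvectors of the corresponding $M_i$ with the same eigenvalue. Consequently the multiset of eigenvalues of $M$ is the disjoint union of those of $M_1$ and $M_2$, and in particular the minimum eigenvalue of $M$ equals $\min\{\lambda_{\min}(M_1), \lambda_{\min}(M_2)\}$.

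Applying this to $Sp(\mathfrak{h}) = \mathrm{diag}(Sp(\mathfrak{h}^1), Sp(\mathfrak{h}^2))$ yields the claim $\lambda_{\min}(\mathfrak{h}) = \min\{\lambda_{\min}(\mathfrak{h}^1), \lambda_{\min}(\mathfrak{h}^2)\}$. There is essentially no obstacle here: the content of the lemma is built into the definition of the direct sum via the block form of $Sp(\mathfrak{h})$, and the remainder is a one-line spectral fact, so the proof is short and requires no auxiliary construction.
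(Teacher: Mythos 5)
Your proof is correct and is exactly the intended argument: the paper itself only cites this lemma from \cite[Lemma 2.12]{Koolen} without reproducing a proof, and the statement is immediate from the block-diagonal form of $Sp(\mathfrak{h})$ built into Definition 2.14 together with the fact that the spectrum of a block-diagonal symmetric matrix is the union of the spectra of its blocks. No gaps.
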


For an integer $t\geq1$, let $\mathfrak{h}^{(t)}$ be the Hoffman graph with one slim vertex and $t$ fat neighbors. The proof of \cite[Lemma $3.5$]{Koolen} gives us the following lemma.
\begin{lem}\label{maximalweight}
Let $t$ be a positive integer. If the Hoffman graph $\mathfrak{h}$ with $\lambda_{\min}(\mathfrak{h})\geq-t$ contains $\mathfrak{h}^{(t)}$ as an induced Hoffman subgraph, then $\lambda_{\min}(\mathfrak{h})=-t$ and $\mathfrak{h}=\mathfrak{h}^{(t)}\oplus\mathfrak{h}^\prime$ for some induced Hoffman subgraph $\mathfrak{h}^\prime$ of $\mathfrak{h}$. In particular, if $\mathfrak{h}$ is indecomposable, then $\mathfrak{h}=\mathfrak{h}^{(t)}$.
\end{lem}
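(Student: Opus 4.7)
The plan is to exploit the reduced representation of $\mathfrak{h}$, which exists by Lemma~\ref{relation} since $\lambda_{\min}(\mathfrak{h})\geq -t$. Let $\psi:V_s(\mathfrak{h})\to\mathbb{R}^n$ be such a reduced representation and let $x_0$ denote the slim vertex of the embedded copy of $\mathfrak{h}^{(t)}$ inside $\mathfrak{h}$. Because $\mathfrak{h}^{(t)}$ is an induced Hoffman subgraph, $x_0$ already has at least $t$ fat neighbors in $\mathfrak{h}$, so $|N_\mathfrak{h}^f(x_0)|\geq t$. On the other hand, Definition~\ref{reducedrep} gives
\[
0\leq(\psi(x_0),\psi(x_0))=t-|N_\mathfrak{h}^f(x_0)|,
\]
which forces $|N_\mathfrak{h}^f(x_0)|=t$ and $\psi(x_0)=\mathbf{0}$. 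In particular, $\langle\{x_0\}\rangle_{\mathfrak{h}}$ consists of $x_0$ together with exactly $t$ fat vertices, each adjacent only to $x_0$, so $\langle\{x_0\}\rangle_{\mathfrak{h}}$ is literally $\mathfrak{h}^{(t)}$.

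The identity $\psi(x_0)=\mathbf{0}$ has two immediate consequences. First, for every slim vertex $y\neq x_0$, the orthogonality relation $(\psi(x_0),\psi(y))=0$ rearranges, via Definition~\ref{reducedrep}, to $|N_\mathfrak{h}^f(x_0,y)|=1$ when $y\sim x_0$ and $|N_\mathfrak{h}^f(x_0,y)|=0$ otherwise. Second, the Gram matrix of $\psi$ equals $Sp(\mathfrak{h})+tI$, and the $x_0$-row of this Gram matrix vanishes; therefore $-t$ is an eigenvalue of $Sp(\mathfrak{h})$, and combining with the hypothesis yields $\lambda_{\min}(\mathfrak{h})=-t$.

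To extract the direct sum, I would apply Lemma~\ref{directsumcombi} with the partition $V_s^1=\{x_0\}$ and $V_s^2=V_s(\mathfrak{h})\setminus\{x_0\}$, setting $\mathfrak{h}'=\langle V_s^2\rangle_{\mathfrak{h}}$. Condition~(iv) of that lemma is precisely what the preceding paragraph established; condition~(ii) is obvious; condition~(iii) is built into the notion of a generated Hoffman subgraph; and condition~(i) holds because every fat vertex of $\mathfrak{h}$ has at least one slim neighbor and is therefore swept into $V(\mathfrak{h}^{(t)})$ or $V(\mathfrak{h}')$ accordingly. Hence $\mathfrak{h}=\mathfrak{h}^{(t)}\oplus\mathfrak{h}'$. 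If $\mathfrak{h}$ is indecomposable, this decomposition must be trivial; since $\mathfrak{h}^{(t)}$ is non-empty, $\mathfrak{h}'$ must have no slim vertices and $\mathfrak{h}=\mathfrak{h}^{(t)}$.

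There is no serious obstacle: the whole argument is driven by the rigidity of the norm identity $(\psi(x_0),\psi(x_0))=t-|N_\mathfrak{h}^f(x_0)|\geq 0$, which collapses the moment $x_0$ already carries its maximum $t$ fat neighbors. The only bookkeeping point is that a fat neighbor of $x_0$ may simultaneously be adjacent to slim vertices in $V_s^2$; this causes no difficulty because Lemma~\ref{directsumcombi} does not require the fat vertex sets of the summands to be disjoint, only the fat-sharing condition~(iv) that we have already verified.
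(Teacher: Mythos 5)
Your argument is correct: the norm identity $(\psi(x_0),\psi(x_0))=t-|N_{\mathfrak{h}}^f(x_0)|$ together with $|N_{\mathfrak{h}}^f(x_0)|\geq t$ forces $\psi(x_0)=\mathbf{0}$, which simultaneously yields the eigenvalue $-t$ and verifies condition (iv) of Lemma \ref{directsumcombi} for the partition $\{\{x_0\},V_s(\mathfrak{h})\setminus\{x_0\}\}$. The paper does not reprove this lemma but imports it from the proof of Lemma 3.5 of \cite{Koolen}, which runs on exactly this reduced-representation rigidity, so your proposal is essentially the same argument, correctly carried out.
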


\begin{de} Let $\mu$ be a real number with $\mu\leq -1$ and let $\mathfrak{h}$ be a Hoffman graph with $\lambda_{\min}(\mathfrak{h})\geq \mu$. Then $\mathfrak{h}$ is called \emph{$\mu$-saturated} if no fat vertex can be attached to $\mathfrak{h}$ in such a way that the resulting Hoffman graph has smallest eigenvalue at least $\mu$.
\end{de}

\begin{de}\label{irreducible} Let $\mu$ be a real number with $\mu\leq -1$ and let $\mathfrak{h}$ be a Hoffman graph with $\lambda_{\min}(\mathfrak{h})\geq \mu$. The Hoffman graph $\mathfrak{h}$ is said to be \emph{$\mu$-reducible} if there exists a Hoffman graph $\widetilde{\mathfrak{h}}$ containing $\mathfrak{h}$ as an induced Hoffman subgraph such that there is a decomposition $\{\widetilde{\mathfrak{h}}_{i}\}_{i=1}^{2}$ of $\widetilde{\mathfrak{h}}$ with $\lambda_{\min}(\widetilde{\mathfrak{h}}_{i})\geq \mu$ and $V_{s}(\widetilde{\mathfrak{h}}_{i})\cap V_{s}(\mathfrak{h})\neq \emptyset\ (i=1,\ 2)$. We say that $\mathfrak{h}$ is \emph{$\mu$-irreducible} if $\lambda_{\min}(\mathfrak{h})\geq \mu$ and $\mathfrak{h}$ is not $\mu$-reducible. A Hoffman graph $\mathfrak{h}$ is said to be \emph{reducible} if $\mathfrak{h}$ is $\lambda_{\min}(\mathfrak{h})$-reducible. We say $\mathfrak{h}$ is \emph{irreducible} if $\mathfrak{h}$ is not reducible. \end{de}

In particular, an irreducible Hoffman graph is indecomposable. But the converse is not true in general.

\subsection{Some results on an integrally representable Hoffman graph $\mathfrak{h}$ of norm $3$}
Let the Hoffman graph $\mathfrak{h}$ be integrally representable of norm $3$ and $\phi: V(\mathfrak{h}) \rightarrow \mathbb{Z}^{n}$ be an integral representation of norm $3$. From Lemma \ref{reprerelation}, we may assume that $\phi$ is a mapping from $V(\mathfrak{h})$ to $\mathbb{Z}^n\bigoplus\mathbb{Z}^{|V_f(\mathfrak{h})|}$, where its composition with the projection $\mathbb{Z}^n\bigoplus\mathbb{Z}^{|V_f(\mathfrak{h})|}\to\mathbb{Z}^n$ gives an integral reduced representation $\psi:V_s(\mathfrak{h})\to\mathbb{Z}^n$. Therefore, in this paper, when a  Hoffman graph $\mathfrak{h}$ is integrally representable of norm $3$, we always write
$$\phi(x)=\psi(x)+\sum_{f\in N_{\mathfrak{h}}^f(x)}{\bf e}_f, \text{ and }$$
$$\psi(x)=\sum_{j=1}^n\psi(x)_j{\bf e}_j,~\psi(x)_j\in\{0,\pm1\}.$$

By Definition \ref{reducedrep}, we have
$$\big|\big\{j\mid j\in\{1,2,\ldots,n\},\psi(x)_j\in\{1,-1\}\big\}\big|=3-|N_{\mathfrak{h}}^f(x)|.$$

Let $x_1$ and $x_2$ be two distinct slim vertices of $\mathfrak{h}$. If $N_{\mathfrak{h}}^f(x_1)\neq\emptyset$ and $N_{\mathfrak{h}}^f(x_2)\neq\emptyset$, then, from \cite[Lemma $3.6$]{Koolen}, we have $(\psi(x_1),\psi(x_2))\in\{0,\pm1\}$ and if $N_{\mathfrak{h}}^f(x_1)=\emptyset$ or $N_{\mathfrak{h}}^f(x_2)=\emptyset$, then we have $(\psi(x_1),\psi(x_2))\in\{0,1\}$ by Definition \ref{reducedrep}.

The following three results will be important for this paper.

\begin{lem}(\cite[Lemma $4.2$ and Claim $4.4$]{Koolen})\label{extend}
Let $\mathfrak{h}$ be a fat, $(-3)$-saturated Hoffman graph with an integral representation $\psi$ of norm $3$ and let $I=\big\{i\mid i\in\{1,\ldots,n\}$ and there exists $x\in V_s(\mathfrak{h})$ such that $\psi(x)_i\neq 0\big\}$. Then the following holds:
\begin{enumerate}
\item For each $i\in I$, there exist $x_1,x_2\in V_s(\mathfrak{h})$ such that $\psi(x_1)_i=-\psi(x_2)_i=1$.
\item If there exists $x_1,x_2\in V_s(\mathfrak{h})$ and $i\in I$ such that $\psi(x_1)_i=-\psi(x_2)_i=1$, then $x_1$ and $x_2$ have distance at most $2$ in $S^-(\mathfrak{h})$.
\end{enumerate}
\end{lem}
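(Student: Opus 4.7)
The plan is to prove (i) by a direct $(-3)$-saturation argument and (ii) by a short case analysis driven by the norm bound $\|\psi(x)\|^{2}\leq 2$ that fatness forces on each slim vertex; part (ii) will invoke (i) in its last subcase.

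For (i), suppose for contradiction that some $i\in I$ satisfies $\psi(x)_{i}\in\{0,1\}$ for every $x\in V_{s}(\mathfrak{h})$ (the case $\psi(x)_{i}\in\{0,-1\}$ is symmetric). Using the decomposition $\phi(x)=\psi(x)+\sum_{f\in N^{f}_{\mathfrak{h}}(x)}\mathbf{e}_{f}$, I construct a Hoffman graph $\mathfrak{h}'$ containing $\mathfrak{h}$ as an induced Hoffman subgraph by adjoining a single new fat vertex $f^{*}$ whose set of slim neighbors equals $\{x\in V_{s}(\mathfrak{h}):\psi(x)_{i}=1\}$, and extend the representation by setting $\phi'(f^{*}):=\mathbf{e}_{i}$ and $\phi'(v):=\phi(v)$ for every $v\in V(\mathfrak{h})$. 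Then $(\phi'(f^{*}),\phi'(f^{*}))=1$; $(\phi'(f^{*}),\phi'(x))=\psi(x)_{i}\in\{0,1\}$, which equals $1$ exactly on the declared slim neighbors of $f^{*}$; and $(\phi'(f^{*}),\mathbf{e}_{g})=0$ for each pre-existing fat vertex $g$, so fat vertices remain pairwise non-adjacent, while $f^{*}$ has at least one slim neighbor since $i\in I$. Hence $\mathfrak{h}'$ is a legitimate Hoffman graph and $\phi'$ is an integral representation of $\mathfrak{h}'$ of norm $3$, so $\lambda_{\min}(\mathfrak{h}')\geq -3$ by Lemma \ref{relation}, contradicting the $(-3)$-saturation of $\mathfrak{h}$.

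For (ii), fatness yields $\|\psi(x)\|^{2}=3-|N^{f}_{\mathfrak{h}}(x)|\in\{1,2\}$, and the bound $(\psi(x_{1}),\psi(x_{2}))\in\{-1,0,1\}$ recorded just before the lemma is the main tool. If $\|\psi(x_{1})\|^{2}=1$ then $\psi(x_{1})=\mathbf{e}_{i}$ and $(\psi(x_{1}),\psi(x_{2}))=\psi(x_{2})_{i}=-1$, so $x_{1}\sim x_{2}$ in $S^{-}(\mathfrak{h})$; the case $\|\psi(x_{2})\|^{2}=1$ is symmetric. Otherwise $\psi(x_{1})=\mathbf{e}_{i}+\delta_{1}\mathbf{e}_{j_{1}}$ and $\psi(x_{2})=-\mathbf{e}_{i}+\delta_{2}\mathbf{e}_{j_{2}}$ with $j_{1},j_{2}\neq i$ and $\delta_{1},\delta_{2}\in\{\pm 1\}$. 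When $j_{1}\neq j_{2}$ we again get $(\psi(x_{1}),\psi(x_{2}))=-1$. When $j_{1}=j_{2}=j$, the identity $(\psi(x_{1}),\psi(x_{2}))=-1+\delta_{1}\delta_{2}$ combined with the lower bound $-1$ forces $\delta_{1}=\delta_{2}=:\delta$ and hence $(\psi(x_{1}),\psi(x_{2}))=0$.

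To close this subcase, I apply (i) to the index $j$ (which lies in $I$ since $\psi(x_{1})_{j}=\delta\neq 0$) to obtain $z\in V_{s}(\mathfrak{h})$ with $\psi(z)_{j}=-\delta$. Both $\psi(z)_{i}=1$ and $\psi(z)_{i}=-1$ are ruled out, because they would make $(\psi(z),\psi(x_{2}))=-2$ or $(\psi(z),\psi(x_{1}))=-2$, violating the lower bound $-1$. Therefore $\psi(z)_{i}=0$, and then $(\psi(z),\psi(x_{1}))=(\psi(z),\psi(x_{2}))=-\delta^{2}=-1$, so $z$ is a common neighbor of $x_{1}$ and $x_{2}$ in $S^{-}(\mathfrak{h})$, giving distance at most $2$ in $S^{-}(\mathfrak{h})$. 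The step I expect to be most delicate is the construction in (i): one must verify that reusing the coordinate $\mathbf{e}_{i}$ to realize $f^{*}$ actually yields a bona fide Hoffman graph extending $\mathfrak{h}$, with the correct slim--fat adjacencies, fat--fat non-adjacency, and at least one slim neighbor of $f^{*}$. In (ii) the only subtlety is the norm-$2$/norm-$2$ coincident-index subcase, which is precisely what forces the distance bound to be $2$ rather than $1$ and compels us to manufacture the intermediate vertex $z$ using (i).
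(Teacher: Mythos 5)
The paper does not actually prove this lemma---it is imported verbatim from \cite{Koolen} (Lemma 4.2 and Claim 4.4 there)---so there is no in-paper proof to compare against. Your argument is correct and is essentially the argument of the cited source: part (i) is the standard saturation step (a single-signed coordinate $i$ would let you attach a new fat vertex represented by $\pm\mathbf{e}_i$ while retaining a norm-$3$ representation, so $\lambda_{\min}\geq -3$ by Lemma \ref{relation}, contradicting $(-3)$-saturation), and in (ii) the constraints $\|\psi(x)\|^{2}\leq 2$ from fatness and $(\psi(x),\psi(y))\in\{0,\pm1\}$ correctly reduce everything to the single subcase $\psi(x_1)=\mathbf{e}_i+\delta\mathbf{e}_j$, $\psi(x_2)=-\mathbf{e}_i+\delta\mathbf{e}_j$, where part (i) applied to coordinate $j$ produces the common $S^{-}$-neighbor. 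The only cosmetic caveat is that fatness gives $\|\psi(x)\|^{2}\in\{0,1,2\}$ rather than $\{1,2\}$, but the vertices $x_1,x_2,z$ you work with all have a nonzero coordinate, so this does not affect the argument.
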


\begin{lem}\cite{KLY}\label{contained}
Let $\mathfrak{h}$ be an indecomposable Hoffman graph with an integral representation of norm $3$, then there exists a Hoffman graph $\widetilde{\mathfrak{h}}$ satisfying the following properties:
\begin{enumerate}
\item $\widetilde{\mathfrak{h}}$ is fat, $(-3)$-saturated and integrally representable of norm $3$;
\item $\widetilde{\mathfrak{h}}$ has $\mathfrak{h}$ as an induced Hoffman subgraph;
\item $\widetilde{\mathfrak{h}}$ has the same slim graph as $\mathfrak{h}$.
\end{enumerate}
\end{lem}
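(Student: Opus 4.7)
My plan is to construct $\widetilde{\mathfrak{h}}$ by iteratively attaching fat vertices to $\mathfrak{h}$, maintaining integral representability of norm $3$ and the original slim graph at every step, and then verifying that the terminal graph has all the required properties. Let $\psi:V_s(\mathfrak{h})\to\mathbb{Z}^n$ be the given integral reduced representation, with entries in $\{0,\pm 1\}$. The elementary extension step is: if there is a coordinate $i\in\{1,\ldots,n\}$ and a sign $\varepsilon\in\{+1,-1\}$ such that $N:=\{x\in V_s(\mathfrak{h}):\psi(x)_i=\varepsilon\}$ is nonempty while no $y$ satisfies $\psi(y)_i=-\varepsilon$, then I adjoin a new fat vertex $f$ with slim-neighbor set $N$ and update the representation by $\psi'(x)=\psi(x)-\varepsilon\mathbf{e}_i$ for $x\in N$ and $\psi'(x)=\psi(x)$ otherwise. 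A direct inner-product computation confirms that $\psi'$ is an integral reduced representation of norm $3$ of the enlarged graph $\mathfrak{h}'$: the norm of $\psi'(x)$ drops by exactly one for $x\in N$, matching the gained fat neighbor, and every other pairwise inner product is preserved precisely because no $y$ had $\psi(y)_i=-\varepsilon$.

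I iterate this procedure. Since $\sum_{x\in V_s}\|\psi(x)\|^2=\sum_{x\in V_s}(3-|N_\mathfrak{h}^f(x)|)$ strictly decreases by $|N|\ge 1$ at each move, the process terminates after finitely many steps at a Hoffman graph $\widetilde{\mathfrak{h}}$ with integral reduced representation $\widetilde{\psi}$. By construction, $\widetilde{\mathfrak{h}}$ is integrally representable of norm $3$, contains $\mathfrak{h}$ as an induced Hoffman subgraph, and has the same slim graph as $\mathfrak{h}$, yielding conditions (ii) and (iii). The termination criterion says that for every coordinate $i$ with $\widetilde{\psi}(x)_i\neq 0$ for some $x$, both signs $+1$ and $-1$ are attained across the slim vertices at coordinate $i$; this matches the conclusion of Lemma~\ref{extend}(i).

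For condition (i), I must show that $\widetilde{\mathfrak{h}}$ is both fat and $(-3)$-saturated. For $(-3)$-saturation, suppose toward contradiction that a further fat vertex $g$ can be attached while preserving $\lambda_{\min}\ge-3$. The key sublemma is that such an extension admits an integral reduced representation of norm $3$, obtained by extending $\widetilde{\psi}$ using Lemma~\ref{relation} together with the integrality structure already in place on $\widetilde{\mathfrak{h}}$; the norm-$1$ integer vector representing $g$ must then be $\pm\mathbf{e}_j$ for some $j$, forcing coordinate $j$ to be one-signed in $\widetilde{\psi}$ and contradicting the termination criterion. For fatness, suppose toward contradiction that some slim vertex $x$ has no fat neighbor in $\widetilde{\mathfrak{h}}$, so $\widetilde{\psi}(x)=\epsilon_1\mathbf{e}_{i_1}+\epsilon_2\mathbf{e}_{i_2}+\epsilon_3\mathbf{e}_{i_3}$ with $\epsilon_j\in\{\pm 1\}$; by the termination criterion, each $i_j$ is attained with the opposite sign $-\epsilon_j$ at some slim vertex $z_j$, and Lemma~\ref{extend}(ii) places $(x,z_j)$ at distance at most $2$ in $S^-(\widetilde{\mathfrak{h}})$ for each $j$.

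The main obstacle is completing this fatness argument. Since $x$ has no fat neighbor, Definition~\ref{reducedrep} forces $(\widetilde{\psi}(x),\widetilde{\psi}(y))\in\{0,1\}$ for every other slim vertex $y$, a strong sign restriction on the triples $(\widetilde{\psi}(y)_{i_1},\widetilde{\psi}(y)_{i_2},\widetilde{\psi}(y)_{i_3})$. I intend to combine this with the indecomposability of $\mathfrak{h}$, which by Lemma~\ref{connected} is equivalent to $\mathcal{S}(\mathfrak{h})$ being connected, through a case analysis on the positions of $x,z_1,z_2,z_3$, aiming either to violate these sign constraints directly or to exhibit a partition of $V_s(\widetilde{\mathfrak{h}})$ separating $x$ from some other vertex in a way that pushes back to a decomposition of $\mathfrak{h}$, contradicting the hypothesis. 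Once fatness is established, the $(-3)$-saturation argument above completes the proof.
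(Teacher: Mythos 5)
The paper itself offers no proof to compare against: Lemma~\ref{contained} is imported from \cite{KLY}, which is listed as ``in preparation'', so your attempt can only be judged on its own terms. Your elementary extension step is sound: if coordinate $i$ carries only the sign $\varepsilon$, attaching a fat vertex on its support and replacing $\psi(x)$ by $\psi(x)-\varepsilon\mathbf{e}_i$ there does preserve the defining identities of Definition~\ref{reducedrep} (the inner product of two vertices in $N$ drops by $1$ exactly as their common fat neighbourhood grows by $1$, and cross terms are untouched because the opposite sign never occurs). Termination is clear, and properties (ii), (iii) and the integral representability of $\widetilde{\mathfrak{h}}$ follow. The difficulty is concentrated entirely in property (i), and there both halves of your argument have genuine gaps.

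For $(-3)$-saturation, your ``key sublemma'' is precisely the hard point and is asserted rather than proved. Attaching a fat vertex to a slim set $M$ keeps the smallest eigenvalue at least $-3$ if and only if $N^TN\succeq\chi_M\chi_M^T$, i.e.\ $\chi_M=N^Tw$ for some \emph{real} $w$ with $\|w\|\leq 1$, where $N$ is the matrix of $\widetilde{\psi}$ and $\chi_M$ the indicator vector of $M$. Saturation requires ruling out every such real $w$, not only the integral ones $w=\pm\mathbf{e}_j$ that your termination criterion excludes; Lemma~\ref{relation} converts $\lambda_{\min}\geq-3$ into a real reduced representation only, and nothing you cite upgrades that to an integral one compatible with $\widetilde{\psi}$. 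For fatness, the argument is doubly incomplete: you explicitly leave the decisive case analysis as an intention, and the one tool you do invoke, Lemma~\ref{extend}(ii), carries ``fat and $(-3)$-saturated'' among its hypotheses --- exactly the properties you are in the middle of establishing for $\widetilde{\mathfrak{h}}$ --- so its use there is circular (the same objection applies to any appeal to Lemma~\ref{extend}(i) at that stage). As it stands, your proof shows only that $\widetilde{\mathfrak{h}}$ admits no further \emph{integral} fat-vertex extension and that properties (ii)--(iii) hold, which is strictly weaker than the lemma.
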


\begin{thm}(\cite[Theorem $3.7$ and Theorem $4.9$]{Koolen})\label{lattice}
Let $\mathfrak{h}$ be a fat and indecomposable Hoffman graph with $\lambda_{\min}(\mathfrak{h})\geq-3$. Then the following holds:
\begin{enumerate}
\item If $\mathfrak{h}$ is $(-3)$-saturated and integrally representable of norm $3$, then its special $(-)$-graph $S^{-}(\mathfrak{h})$ is isomorphic to $A_{m},D_{m},\widetilde{A}_{m-1}$ or $\widetilde{D}_{m-1}$, where $m=|V_{s}(\mathfrak{h})|$.
\item If $\mathfrak{h}$ contains a slim vertex with at least two fat neighbors, then $\mathfrak{h}$ is integrally representable of norm $3$.
\end{enumerate}
\end{thm}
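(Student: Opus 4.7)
My plan is to exploit the rigidity that integrality together with fatness imposes on the representing vectors. Since $\mathfrak{h}$ is fat, every slim vertex $x$ satisfies $|N_{\mathfrak{h}}^f(x)|\in\{1,2,3\}$, so by Definition \ref{reducedrep} we have $\|\psi(x)\|^2=3-|N_{\mathfrak{h}}^f(x)|$. Combined with $\psi(x)\in\{0,\pm1\}^n$, this forces each $\psi(x)$ to be one of $0$, $\pm e_i$, or $\pm e_i\pm e_j$ with $i\neq j$. Consequently a $(-)$-edge $\{x,y\}$ in $S^-(\mathfrak{h})$, meaning $(\psi(x),\psi(y))=-1$, forces $x$ and $y$ to share exactly one coordinate with opposite signs and otherwise to have disjoint supports. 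This extreme restriction is what drives both the classification and the construction.

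For part (1), I would encode the situation as an auxiliary multigraph $\Gamma$ on the used coordinate set $I=\{i:\psi(x)_i\neq 0 \text{ for some slim } x\}$, where each slim vertex with support $\{i,j\}$ becomes an edge of $\Gamma$ joining $i$ and $j$, and each slim vertex with support $\{i\}$ becomes a pendant at $i$. By Lemma \ref{extend}(1) every $i\in I$ is used in both signs, and by Lemma \ref{extend}(2) the two sign-users lie within distance two in $S^-(\mathfrak{h})$. After normalizing signs (i.e., replacing some $\psi(x)$ by $-\psi(x)$), the structure of $S^-(\mathfrak{h})$ tracks incidence in $\Gamma$ directly: its components correspond to line graphs of connected subgraphs of $\Gamma$. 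The $(-3)$-saturation forbids further slim or fat attachments, and combining this with the positive semidefiniteness of $Sp(\mathfrak{h})+3I$ restricts $\Gamma$ to a path or a cycle with at most one pendant at each extremal vertex; these four families produce exactly $A_m$, $\widetilde{A}_{m-1}$, $D_m$, and $\widetilde{D}_{m-1}$.

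For part (2), I would build the representation greedily. Pick a slim vertex $v_0$ with $|N_{\mathfrak{h}}^f(v_0)|\ge 2$; then $\|\psi(v_0)\|^2\le 1$, so set $\psi(v_0)=0$ or $\psi(v_0)=e_1$. Traverse the remaining slim vertices along the special graph $\mathcal{S}(\mathfrak{h})$, which is connected by Lemma \ref{connected}, and extend the assignment one vertex at a time. At each new slim $y$ the required inner products with already-placed vectors are the corresponding entries of $Sp(\mathfrak{h})$, the norm is fixed at $3-|N_{\mathfrak{h}}^f(y)|$, and the at-most-two-nonzero shape leaves only finitely many $\{0,\pm1\}$-candidates; a consistent choice exists because $Sp(\mathfrak{h})+3I\succeq 0$, and new coordinates are introduced whenever existing ones cannot accommodate $\psi(y)$. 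Global consistency around cycles of $\mathcal{S}(\mathfrak{h})$ follows from the same positive semidefiniteness.

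The main obstacle is the final step of part (1): ruling out branchings of degree three or more in $\Gamma$, which would yield $E$-type Dynkin diagrams as $S^-(\mathfrak{h})$. Such a branching would force additional slim vectors that either violate integrality or admit a further fat attachment (contradicting $(-3)$-saturation), but making this argument clean requires a careful case analysis of small configurations around a branch point of $\Gamma$.
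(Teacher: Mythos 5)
You should first be aware that the paper offers no proof of Theorem~\ref{lattice}: it is imported verbatim from \cite[Theorems 3.7 and 4.9]{Koolen}, so there is no in-paper argument to compare against and your attempt has to stand on its own. It does not, and the most serious problem is in part (ii). Your greedy construction justifies each extension step by ``a consistent choice exists because $Sp(\mathfrak{h})+3I\succeq 0$,'' but positive semidefiniteness only yields a \emph{real} reduced representation (that is Lemma~\ref{relation}); the existence of a $\{0,\pm 1\}$-valued solution at each step is exactly the integrality that part (ii) asserts, and you never derive it. Moreover, the hypothesis that some slim vertex has at least two fat neighbors enters your argument only as a convenient seed $\psi(v_0)\in\{0,\mathbf{e}_1\}$; every later step is claimed to work by semidefiniteness alone. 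If that were sound, the identical traversal started at an arbitrary vertex would prove that \emph{every} fat indecomposable Hoffman graph with $\lambda_{\min}\geq -3$ is integrally representable of norm $3$ --- which is false, as this very paper shows: Proposition~\ref{Esimilar} and Theorem~\ref{fatseedling} produce fat, indecomposable, $(-3)$-irreducible Hoffman graphs (those with special graph switching equivalent to $\widetilde{E}_6$, $\widetilde{E}_7$, $\widetilde{E}_8$, each slim vertex having exactly one fat neighbor) that are \emph{not} integrally representable of norm $3$. So the two-fat-neighbor hypothesis must be used structurally, not just to start the induction, and your scheme gives no mechanism for that.

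Part (i) also has a gap at its crux. You yourself flag that ruling out branch configurations in the auxiliary multigraph $\Gamma$ ``requires a careful case analysis'' which you do not carry out; that case analysis \emph{is} the theorem. In addition, the intermediate structural claim is not right as stated: you say the components of $S^-(\mathfrak{h})$ are line graphs of connected subgraphs of $\Gamma$, and that $\Gamma$ is a path or cycle with at most one pendant per extremal vertex. But $D_m$ and $\widetilde{D}_{m-1}$ contain induced claws $K_{1,3}$ and hence are not line graphs at all; realizing them integrally requires a doubled coordinate pair (vectors $\mathbf{e}_i-\mathbf{e}_j$ and $\mathbf{e}_i+\mathbf{e}_j$, i.e., a ``petal'' in the generalized-line-graph sense), which is neither a pendant (a vector $\pm\mathbf{e}_i$, coming from a slim vertex with two fat neighbors) nor covered by your list of admissible $\Gamma$. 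Two opposite pendants $\mathbf{e}_i,-\mathbf{e}_i$ at the same coordinate produce a $(-)$-edge between them, not a branch point, so they cannot substitute for the petal. Until the generalized-line-graph bookkeeping is set up correctly and the branch-point exclusion is actually performed (this is where $(-3)$-saturation and Lemma~\ref{extend} must do real work), part (i) is not proved.
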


\section{Tree-like Hoffman graphs}\label{construction}
\subsection{Stripped Hoffman graphs}
Let $\mathfrak{t}_{1}$ and $\mathfrak{t}_{2}$ be two tree-like Hoffman graphs, then the Hoffman graph $\mathfrak{t}:= \mathfrak{t}_{1}\oplus\mathfrak{t}_{2}$ is not tree-like. So, in order to construct tree-like Hoffman graphs from smaller ones, we need to remove some fat vertices from the direct sum. This leads us to the definition of the \emph{stripped Hoffman graph}.
Let $\mathfrak{H}=\{\mathfrak{h}_i\mid i=1,\ldots,r\}$ be a finite family of Hoffman graphs. We define the stripped Hoffman graph $h_s(\mathfrak{H})$ by

$$h_s(\mathfrak{H})=\oplus_{i=1}^{r}\mathfrak{h}_i-\mathop\cup\limits_{i<j}V_f(\mathfrak{h}_i)\cap V_f(\mathfrak{h}_j).$$

\begin{lem}\label{fsum}
Let $\mathfrak{H}=\{\mathfrak{h}_i\mid i=1,\ldots,r\}$ be a family of connected Hoffman graphs. Then the Hoffman graph $h_s(\mathfrak{H})=h_s(\mathfrak{h}_1,\ldots,\mathfrak{h}_r)$ is tree-like if and only if the following conditions are satisfied:
\begin{enumerate}
\item $h_s(\mathfrak{H})$ is connected;
\item $\mathfrak{h}_{i}$ is tree-like for $i=1,\ldots,r$;
\item $|V_{f}(\mathfrak{h}_{i})\cap V_{f}(\mathfrak{h}_{j})|\leq1$ for any $i\neq j$;
\item $V_{f}(\mathfrak{h}_{i_1})\cap V_{f}(\mathfrak{h}_{i_2})\cap V_{f}(\mathfrak{h}_{i_3})=\emptyset$ for all $1\leq i_1< i_2<i_3\leq r$;
\item if a fat vertex $f\in V_{f}(\mathfrak{h}_{i})\cap V_{f}(\mathfrak{h}_{j})$ where $i\neq j$, then $f$ is a leaf for at lest one of them.
\end{enumerate}
\end{lem}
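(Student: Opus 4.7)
The plan is to establish both directions of the equivalence, with the reverse implication being the more substantive one. For the forward direction, assume $h_s(\mathfrak{H})$ is tree-like. Condition (1) is then immediate. For (2)--(5) I proceed by contrapositive: a failure of any of these produces a cycle in $h_s(\mathfrak{H})$. The key mechanism is Lemma~\ref{directsumcombi}(iv): slim vertices $u\in V_s(\mathfrak{h}_i)$ and $v\in V_s(\mathfrak{h}_j)$ with $i\neq j$ are adjacent in the direct sum precisely when they share a single fat neighbor, and such cross-edges persist after the stripping. Using this, a cycle in some $\mathfrak{h}_i$ either survives directly (if it avoids all stripped fats) or is closed by cross-edges at any stripped fat it meets; two shared fats between a pair $(\mathfrak{h}_i,\mathfrak{h}_j)$ create two cross-bridges that, together with internal paths in the connected $\mathfrak{h}_i,\mathfrak{h}_j$, enclose a cycle; a triple intersection produces a triangle of cross-edges among three slim representatives; and a shared fat that is non-leaf on both sides gives a $K_{2,2}$ among four slim neighbors.

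For the reverse direction, assume (1)--(5). Let $k$ be the number of shared fat vertices, unambiguous thanks to (3) and (4). Condition (5) controls the cross-edges tightly: each shared fat $f$, leaf in, say, $\mathfrak{h}_j$ with unique slim neighbor $v_f$, contributes exactly $\deg_{\mathfrak{h}_i}(f)$ cross-edges, all of the form $u\sim v_f$ with $u\in V_s(\mathfrak{h}_i)$ adjacent to $f$. A direct edge--vertex count then yields
\[
|V(h_s(\mathfrak{H}))|=\sum_i|V(\mathfrak{h}_i)|-2k,\qquad |E(h_s(\mathfrak{H}))|=\sum_i|E(\mathfrak{h}_i)|-k,
\]
and using (2) (each $\mathfrak{h}_i$ is a tree, so $|E(\mathfrak{h}_i)|=|V(\mathfrak{h}_i)|-1$) this gives $|V(h_s(\mathfrak{H}))|-|E(h_s(\mathfrak{H}))|=r-k$. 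Given (1), it therefore suffices to prove $k=r-1$. For this I introduce the intersection graph $T^*$ whose vertices are the $\mathfrak{h}_i$'s and whose edges are the $k$ shared fats; (1) forces $T^*$ to be connected, and any cycle in $T^*$ lifts (via the same cross-edge mechanism as in the forward direction) to a cycle in $h_s(\mathfrak{H})$. Hence $T^*$ is a tree, so $k=r-1$, which combined with connectivity closes the argument.

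The main obstacle is the careful bookkeeping of cross-edges under the iterated direct sum. In the forward direction one has to verify that the cycles constructed from violations of (2)--(5) are genuine simple cycles (no repeated vertices or edges), which requires examining how stripped fats interact when a single cycle hits several of them. In the reverse direction one must check that the cross-edges introduced at each shared fat contribute to $h_s(\mathfrak{H})$ exactly in the way the count demands, without secretly creating extra cycles independent of the intersection graph $T^*$. Condition (5) is the essential combinatorial ingredient that makes this bookkeeping work: it forces the cross-edges at each shared fat $f$ to emanate as a star from a single ``leaf-side'' vertex $v_f$, so that the cycle structure of $h_s(\mathfrak{H})$ is controlled entirely by the cycle structure of $T^*$ together with the internal structure of the trees $\mathfrak{h}_i$.
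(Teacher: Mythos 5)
Your forward direction is sound (modulo the bookkeeping you acknowledge), and your Euler-characteristic count in the reverse direction is exactly right: with (ii)--(v) one gets $|V(h_s(\mathfrak{H}))|-|E(h_s(\mathfrak{H}))|=r-k$, where $k$ is the number of shared fat vertices, so everything hinges on showing $k=r-1$. The paper offers no argument to compare against --- its proof is the single sentence ``it is straightforward to check'' --- but your treatment of this hinge point is where the genuine gap lies. You deduce that the intersection graph $T^*$ is acyclic from the observation that a cycle in $T^*$ would lift to a cycle in $h_s(\mathfrak{H})$. That is circular: acyclicity of $h_s(\mathfrak{H})$ is precisely what you are trying to establish in this direction, so you cannot invoke it to constrain $T^*$.

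Moreover, the gap cannot be repaired from hypotheses (i)--(v) alone, because the stated equivalence fails for $r\geq 3$. Take $\mathfrak{h}_1,\mathfrak{h}_2,\mathfrak{h}_3$, where $\mathfrak{h}_i$ consists of a single slim vertex $x_i$ with two fat neighbors, arranged so that $V_f(\mathfrak{h}_i)\cap V_f(\mathfrak{h}_j)=\{f_{ij}\}$ for each pair $i\neq j$. Each $\mathfrak{h}_i$ is a connected tree-like Hoffman graph, every pairwise fat intersection has size one, every triple intersection is empty, and every shared fat vertex is a leaf of both graphs containing it; the direct sum exists (its special matrix is $-2I$), and stripping the three shared fat vertices leaves the triangle $x_1x_2x_3$, which is connected but not a tree. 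So (i)--(v) all hold while $h_s(\mathfrak{H})$ is not tree-like; your own count confirms this, since here $r-k=0$. The lemma needs the additional hypothesis that $T^*$ is acyclic (equivalently, that $\sum_{i<j}|V_f(\mathfrak{h}_i)\cap V_f(\mathfrak{h}_j)|=r-1$), and with that hypothesis your counting argument does close the proof. In the paper's applications the parts always arise by cutting up a tree, so $T^*$ is automatically a tree and nothing downstream is affected; but as a free-standing equivalence the statement, and hence any complete proof of it, requires the extra condition.
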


\begin{proof} It is straightforward to check that the conditions (i)-(v) are sufficient and necessary to ensure that $h_s(\mathfrak{H})$ is tree-like.
\end{proof}

\begin{re}\label{condition}
\begin{enumerate}
\item A tree-like Hoffman graph is indecomposable.
\item Let $\mathfrak{H}=\{\mathfrak{h}_i\mid i=1,\ldots,r\}$ be a family of Hoffman graphs. If $V_{f}(\mathfrak{h}_{i_1})\cap V_{f}(\mathfrak{h}_{i_2})\cap V_{f}(\mathfrak{h}_{r})=\emptyset$ for all $1\leq i_1< i_2\leq r-1$, then $h_s(\mathfrak{h}_1,\ldots,\mathfrak{h}_r)=h_s(h_s(\mathfrak{h}_1,\ldots,\mathfrak{h}_{r-1}),\mathfrak{h}_r)$.
\item If a tree-like Hoffman graph $\mathfrak{t}$ is an induced Hoffman subgraph of the Hoffman graph $\mathfrak{h}_1\oplus\mathfrak{h}_2$ where $V_s(\mathfrak{h}_1\oplus\mathfrak{h}_2)=V_s(\mathfrak{t})$, then $\mathfrak{t}$ is an induced Hoffman subgraph of $h_s(\mathfrak{h}_1,\mathfrak{h}_2)$.
\end{enumerate}
\end{re}

\subsection{Some results on tree-like Hoffman graphs}
In this subsection, we will give several results about tree-like Hoffman graphs with smallest eigenvalue at least $-3$.
\begin{pro}
Let $\mathfrak{t}=(T,l)$ be a tree-like Hoffman graph with $\lambda_{\min}(\mathfrak{t})\geq-3$. Then the special graph of $\mathfrak{t}$ is an edge-signed tree or $\mathfrak{t}\cong$\raisebox{-0.3ex}{\includegraphics[scale=0.13]{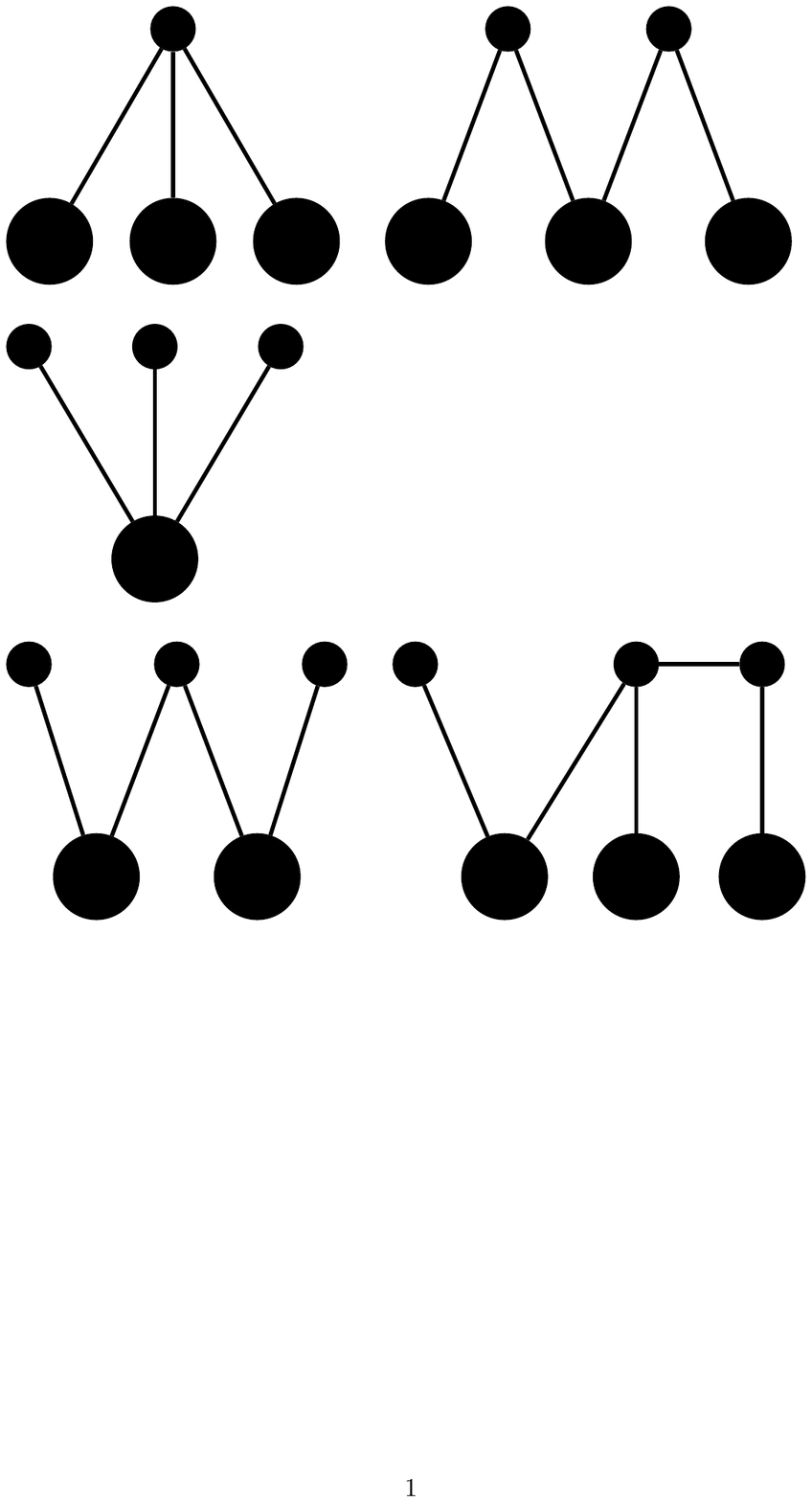}}.
\end{pro}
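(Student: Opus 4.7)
The plan is to characterize, in a tree-like Hoffman graph $\mathfrak{t}$, exactly when the special graph $\mathcal{S}(\mathfrak{t})$ fails to be a tree, and then to invoke $\lambda_{\min}(\mathfrak{t})\ge -3$ to pin down the exceptional case. First I would describe the edges of $\mathcal{S}(\mathfrak{t})$ combinatorially: because $T$ has no cycles, two slim vertices share at most one fat neighbor (else a $4$-cycle in $T$) and two $T$-adjacent slim vertices share no fat (else a triangle). By Definition~\ref{reducedrep}, each slim-slim $T$-edge therefore produces a $(+)$-edge of $\mathcal{S}(\mathfrak{t})$, each pair of slim vertices with a common fat produces a $(-)$-edge, and nothing else is an edge. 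Connectedness of $\mathcal{S}(\mathfrak{t})$ follows by tracing the unique $T$-path between any two slim vertices $x,y$ and short-cutting each intermediate fat vertex (which has exactly two slim $T$-neighbors on the path).

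I would then prove that $\mathcal{S}(\mathfrak{t})$ is a tree if and only if every fat vertex of $\mathfrak{t}$ has at most two slim neighbors. If some fat $f$ has three slim neighbors $v_1,v_2,v_3$, the combinatorial description above forces each pair $\{v_i,v_j\}$ to be a $(-)$-edge, creating a triangle in $\mathcal{S}(\mathfrak{t})$. Conversely, suppose $\mathcal{S}(\mathfrak{t})$ contains a simple cycle $v_1 v_2 \cdots v_k v_1$. Lift each cycle edge $v_i v_{i+1}$ to its realisation in $T$, namely a path of length $1$ when the edge is of type $(+)$ and a length-$2$ path through the shared fat $f_i$ when the edge is of type $(-)$. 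Concatenating the lifts produces a closed walk $W$ in $T$, and because $T$ is a tree $W$ must traverse every $T$-edge an even number of times. A slim-slim $T$-edge can be used only by the $(+)$-lift of a single cycle edge, so its multiplicity in $W$ is $0$ or $1$; evenness forces $0$, and hence no cycle edge is of type $(+)$. The slim-fat $T$-edge $v_i f_i$ is used once by the lift of $v_i v_{i+1}$, and additionally by the lift of $v_{i-1} v_i$ precisely when $f_{i-1}=f_i$; evenness then forces $f_{i-1}=f_i$ for every cyclic index $i$, so all $f_i$ collapse to a single fat $f$ adjacent to $v_1,\ldots,v_k$, giving $|N^s(f)|\ge k\ge 3$.

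Finally, I would show that if $\mathfrak{t}$ is tree-like with $\lambda_{\min}(\mathfrak{t})\ge -3$ and some fat $f_0$ has at least three slim neighbors, then $\mathfrak{t}\cong$\raisebox{-0.3ex}{\includegraphics[scale=0.13]{photo3}}. Interlacing (Theorem~\ref{interlacing}) applied to principal submatrices of $Sp(\mathfrak{t})$ shows that $\lambda_{\min}$ is inherited by every induced sub-Hoffman-graph on slim subsets. If $|N^s(f_0)|\ge 4$, picking four slim neighbors $v_1,v_2,v_3,v_4$ yields a $4\times 4$ principal submatrix with all diagonal entries $\le -1$ and all off-diagonal entries equal to $-1$ (since no two $v_i$ are $T$-adjacent and they share precisely the fat $f_0$ by tree-likeness), so the all-ones Rayleigh quotient is at most $-4$; contradiction. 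Hence $|N^s(f_0)|=3$; write $N^s(f_0)=\{v_1,v_2,v_3\}$. Now suppose $v_1$ has a further neighbor $w$ in $\mathfrak{t}$. If $w$ is slim, then in the minimal case (only $f_0$ is a fat of $\mathfrak{t}$) the principal submatrix of $Sp(\mathfrak{t})$ on $\{v_1,v_2,v_3,w\}$ is
\[
\begin{pmatrix} -1 & -1 & -1 & 1\\ -1 & -1 & -1 & 0\\ -1 & -1 & -1 & 0\\ 1 & 0 & 0 & 0\end{pmatrix},
\]
whose characteristic polynomial factors as $\lambda(\lambda^3+3\lambda^2-\lambda-2)$ and has a root in $(-4,-3)$; additional fats of $\mathfrak{t}$ only subtract positive semidefinite terms from $Sp(\mathfrak{t})$ and lower $\lambda_{\min}$ further. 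If $w$ is fat, the principal submatrix on $\{v_1,v_2,v_3\}$ in the minimal case equals
\[
\begin{pmatrix} -2 & -1 & -1\\ -1 & -1 & -1\\ -1 & -1 & -1\end{pmatrix},
\]
whose smallest eigenvalue is $-2-\sqrt 2<-3$; again additional fats only worsen it. Both cases contradict $\lambda_{\min}(\mathfrak{t})\ge -3$. Therefore each $v_i$ has no neighbor other than $f_0$, $f_0$'s only neighbors are $v_1,v_2,v_3$, and connectedness of $T$ forces $\mathfrak{t}$ to be the claimed star.

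The main obstacle will be the parity argument in the second paragraph: one must exploit that every closed walk in a tree uses each edge an even number of times, first to eliminate $(+)$-type cycle edges and then to collapse all the fat bridges $f_i$ of a special-graph cycle into a single fat vertex. The spectral computations in the third paragraph are short but branch into several cases, each of which must independently yield $\lambda_{\min}<-3$ before interlacing delivers the contradiction.
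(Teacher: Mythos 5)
Your proposal is correct and takes essentially the same route as the paper's proof: both turn on the dichotomy of whether some fat vertex has three or more slim neighbours, ruling out every proper extension of the three-slim star by a smallest-eigenvalue computation, and otherwise obtaining the special graph as an edge-signed tree by replacing each degree-two fat vertex with a $(-)$-edge and discarding fat leaves. The only difference is one of self-containedness: you verify explicitly (via interlacing and the two small matrices) what the paper imports from \cite{KLY}, and you supply the parity/lifting argument for acyclicity of the special graph that the paper leaves implicit in its "replacement" description.
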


\begin{proof} Note that the Hoffman graph \raisebox{-0.3ex}{\includegraphics[scale=0.13]{photo3}} is $(-3)$-saturated (see \cite[Table $1$]{KLY}). So, if  \raisebox{-0.3ex}{\includegraphics[scale=0.13]{photo3}} is a proper induced Hoffman subgraph of $\mathfrak{t}$, then $\mathfrak{t}$ contains \raisebox{-0.3ex}{\includegraphics[scale=0.13]{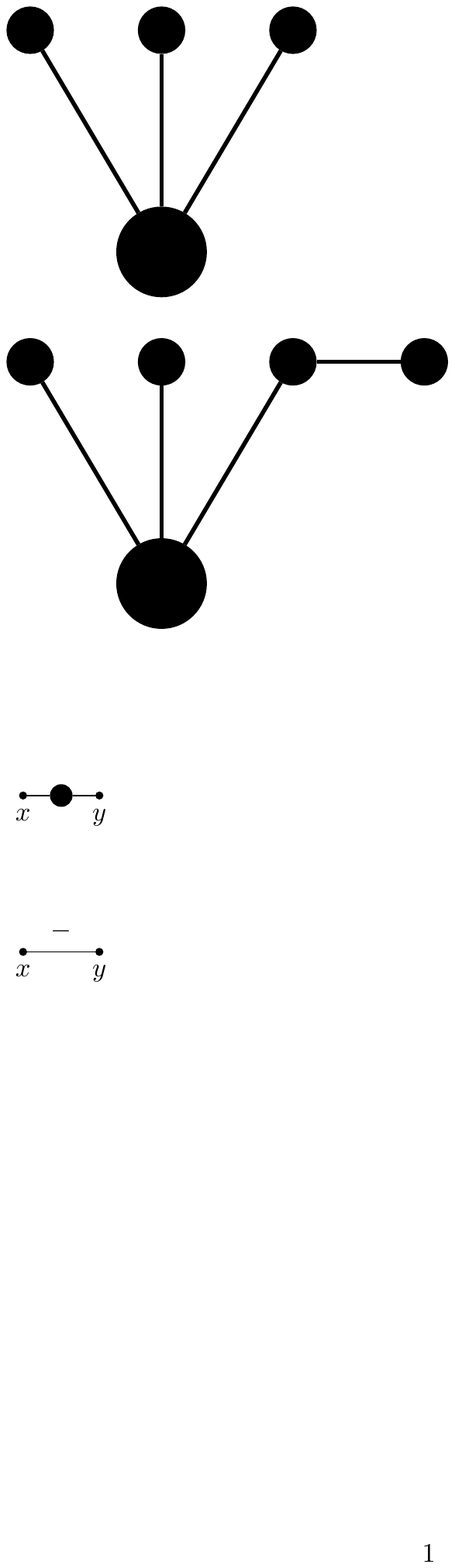}} or \raisebox{-0.3ex}{\includegraphics[scale=0.13]{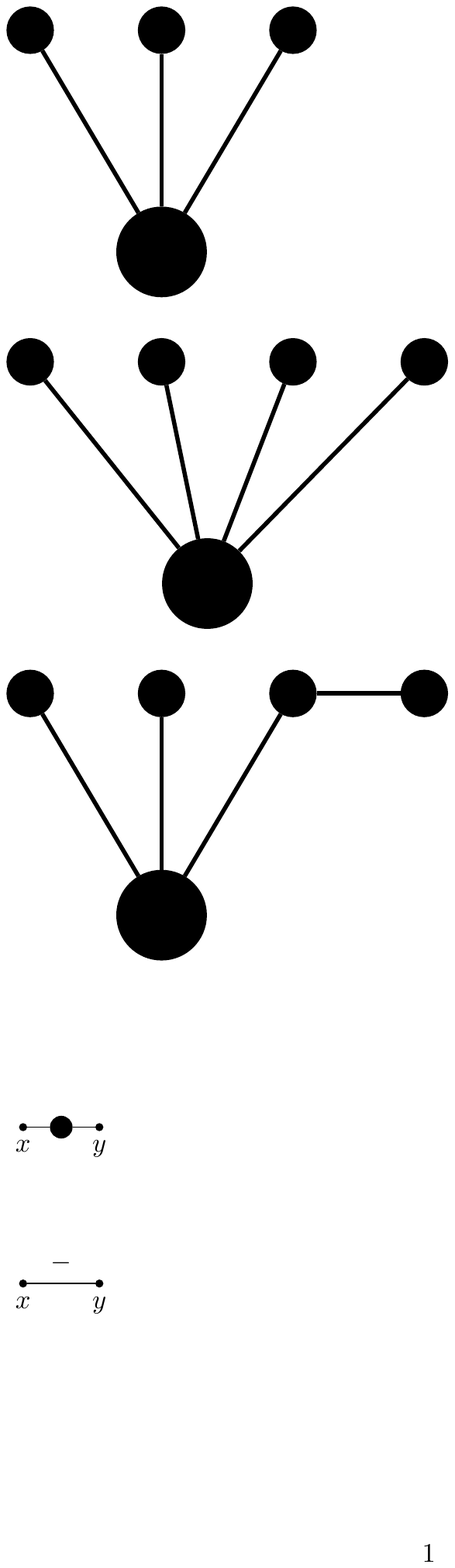}} as an induced Hoffman subgraph, because $\mathfrak{t}$ is tree-like. But $\lambda_{\min}(\raisebox{-0.4ex}{\includegraphics[scale=0.13]{section4induced1}})<-3$ and $\lambda_{\min}(\raisebox{-0.4ex}{\includegraphics[scale=0.13]{section4induced2}})<-3$. This shows that if $\mathfrak{t}\not\cong$ \raisebox{-0.3ex}{\includegraphics[scale=0.13]{photo3}}, then every fat vertex of $\mathfrak{t}$ has at most two slim neighbors (which are not adjacent in $\mathfrak{t}$). Now the special graph of $\mathfrak{t}$ is obtained by replacing \raisebox{-0.5ex}{\includegraphics[scale=0.8]{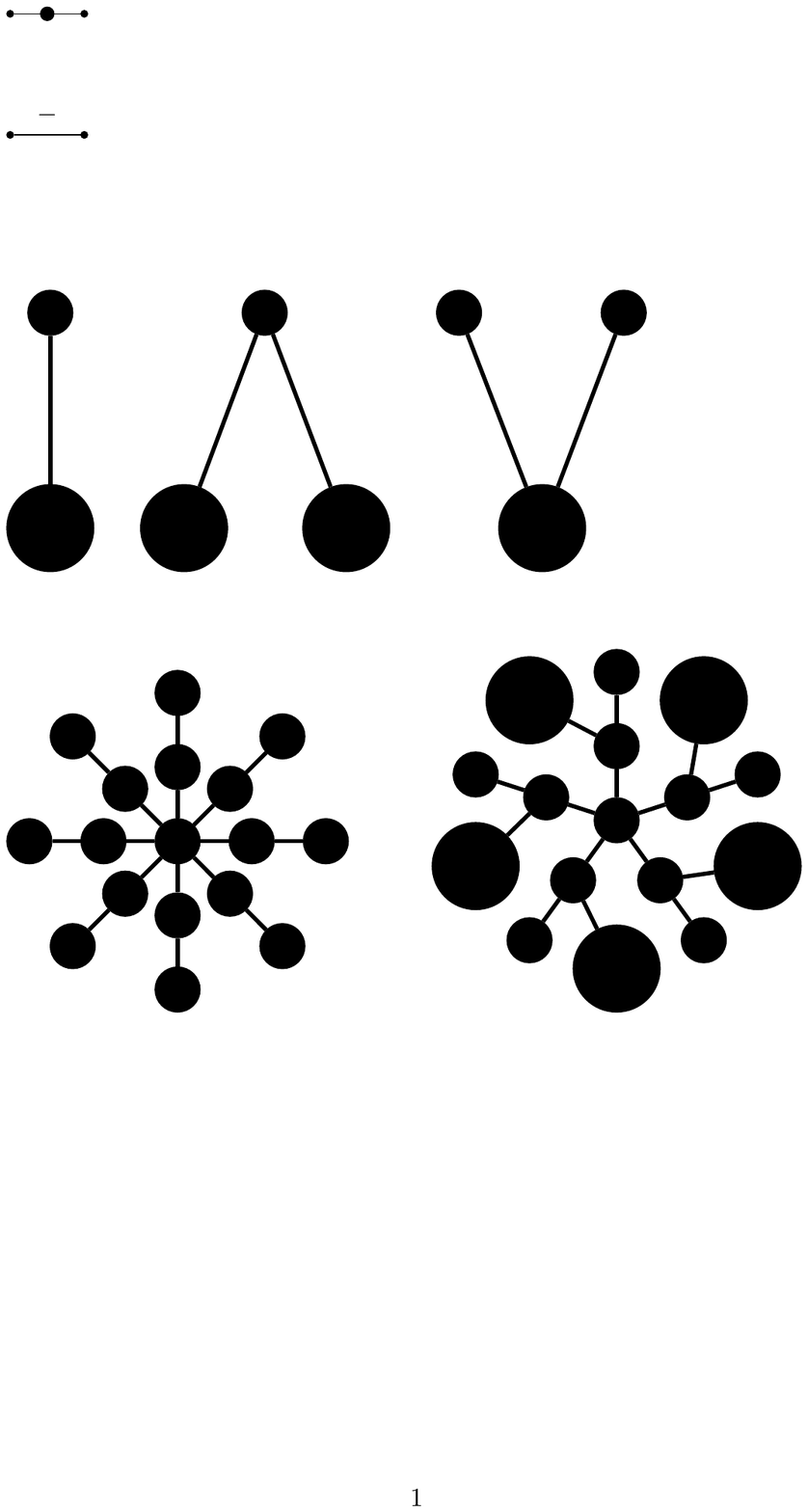}} by \raisebox{-0.5ex}{\includegraphics[scale=0.8]{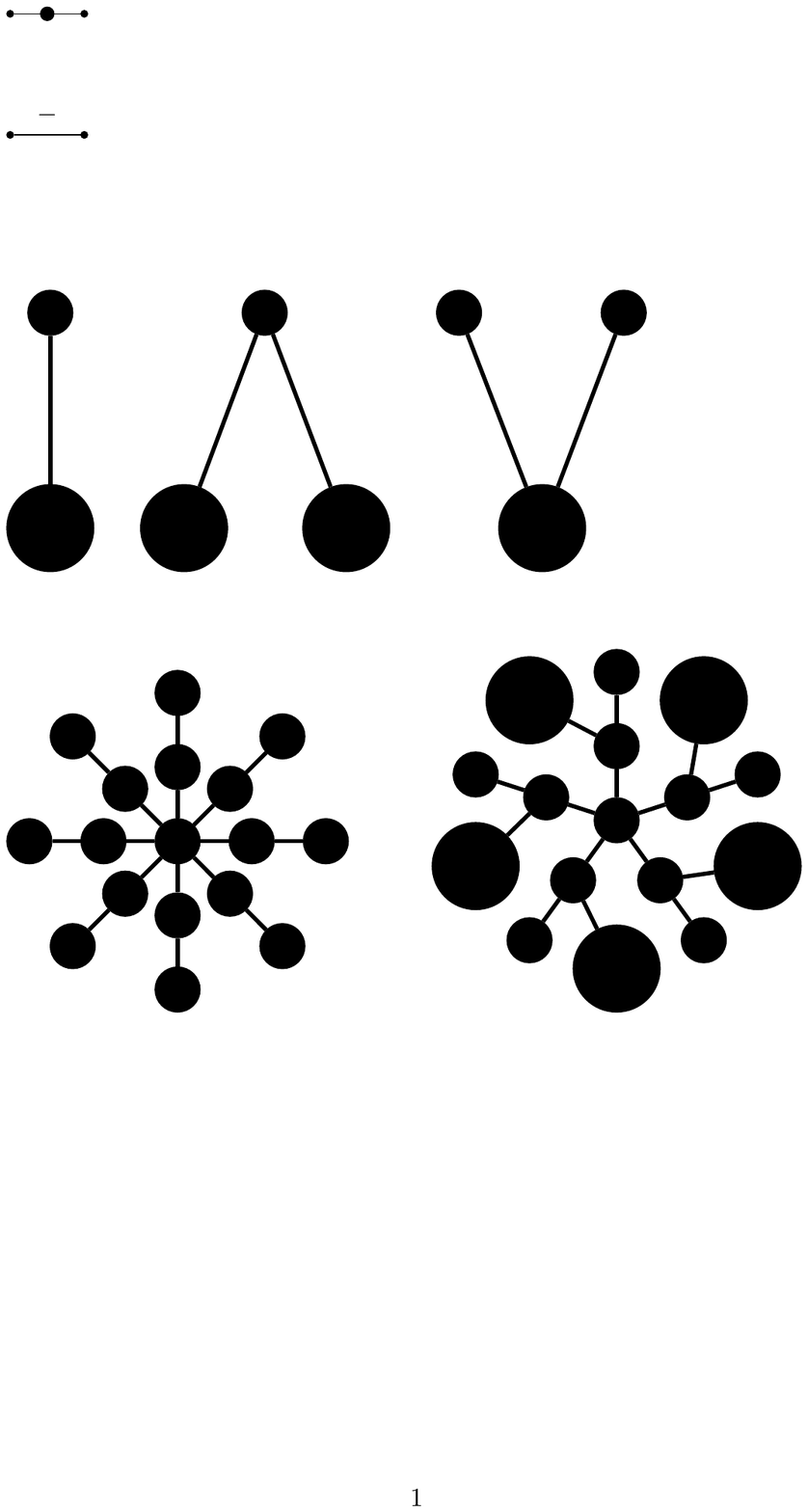}}, and removing the fat leaves of $\mathfrak{t}$. This shows the statement.
\end{proof}

\begin{lem}\label{edgesignedtree}
 Let $\mathcal{T}=(T,\text{sgn})$ be an edge-signed tree. Then $(T,\text{sgn})$ is switching equivalent to $(T,-)$. Therefore, for the signed adjacency matrix $B(\mathcal{T})$ of $\mathcal{T}$, there exists a real diagonal matrix $D$ such that $DB(\mathcal{T})D\leq0$, where $D^{-1}=D$.
 \end{lem}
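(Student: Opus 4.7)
The plan is to establish the first claim by induction on the number of edges of the tree $T$, and then deduce the matrix statement as an immediate corollary using the fact that switching at a vertex corresponds to conjugation of $B(\mathcal{T})$ by a diagonal sign matrix.

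For the inductive proof, the base case $|E(T)|=0$ is trivial. For the inductive step I would exploit the fact that any tree with at least one edge has a leaf. Pick a leaf $\ell$ of $T$ with unique incident edge $e=\{\ell,v\}$, and let $T'$ denote the tree obtained by deleting $\ell$, endowed with the restricted sign function. By the induction hypothesis there is a set $U'\subset V(T')$ whose associated switching transforms $(T',\text{sgn}|_{E(T')})$ into $(T',-)$; these switchings do not involve $\ell$ and therefore leave the sign of $e$ unchanged. Depending on the resulting sign of $e$, I either take $U=U'$ (if $e$ is already negative) or $U=U'\cup\{\ell\}$ (if $e$ is positive), using that switching at a leaf flips exactly one sign, namely that of $e$. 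Either way, every edge of $T$ ends up negative, so $(T,\text{sgn})$ is switching equivalent to $(T,-)$.

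To obtain the matrix statement, I would use the standard observation that switching at a single vertex $x$ corresponds to conjugating $B(\mathcal{T})$ by the diagonal matrix $D_x$ with $(D_x)_{xx}=-1$ and all other diagonal entries equal to $+1$. The composition of switchings at the vertices of $U$ is then realized by the diagonal matrix $D=\prod_{x\in U}D_x$, whose entries all lie in $\{\pm 1\}$, so $D^2=I$ and in particular $D^{-1}=D$. By the first part, $DB(\mathcal{T})D$ is the signed adjacency matrix of $(T,-)$, which equals $-A(T)$ and is therefore entrywise nonpositive.

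I do not foresee any real obstacle; the argument is essentially a bookkeeping exercise enabled by the crucial combinatorial fact that a leaf has exactly one incident edge, so switching at it changes exactly one sign. An equivalent constructive proof would root $T$ arbitrarily and process vertices in BFS order, switching at a non-root vertex whenever the edge to its parent is currently positive; this avoids induction at the cost of a bit more notation, and also makes explicit the set $U$ realized by $D$.
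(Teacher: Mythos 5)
Your proof is correct and takes essentially the same route as the paper, which simply states that the lemma ``follows by an easy induction on the number of vertices'' (equivalent to your induction on edges, since $|E(T)|=|V(T)|-1$ for a tree). Your leaf-removal argument and the translation of switching into conjugation by a $\{\pm1\}$-diagonal matrix are exactly the details the paper leaves implicit.
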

\begin{proof}
This follows by an easy induction on the number of vertices.
\end{proof}

\begin{lem}\label{-similar}
Let $\mathfrak{t}=(T,l)$ be a tree-like Hoffman graph with $\lambda_{\min}(\mathfrak{t})\geq-3$. Then the following holds:
\begin{enumerate}
\item There exists a real diagonal matrix $D$ such that $DSp(\mathfrak{t})D\leq0$, where $Sp(\mathfrak{t})$ is the special matrix of $\mathfrak{t}$ and $D^{-1}=D$.
\item The multiplicity of $\lambda_{\min}(\mathfrak{t})$ is equal to 1.
\end{enumerate}
\end{lem}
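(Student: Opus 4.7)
The plan is to combine the preceding Proposition, the switching trick for edge-signed trees (Lemma \ref{edgesignedtree}), and the Perron--Frobenius Theorem. By that Proposition, either the special graph $\mathcal{S}(\mathfrak{t})$ is an edge-signed tree or $\mathfrak{t}$ is the exceptional Hoffman graph (one fat vertex joined to three mutually non-adjacent slim leaves). In the exceptional case, a direct calculation gives $Sp(\mathfrak{t})=-J$, where $J$ is the $3\times 3$ all-ones matrix; hence $D=I$ settles (i), and the eigenvalues $-3,0,0$ settle (ii). We may therefore assume that $\mathcal{S}(\mathfrak{t})$ is an edge-signed tree.

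The key observation is that in a tree-like Hoffman graph any two slim vertices share at most one common fat neighbor, for two common fat neighbors $f_1,f_2$ of slim vertices $x,y$ would give the cycle $x f_1 y f_2 x$ in $T$. Consequently every off-diagonal entry of $Sp(\mathfrak{t})$ lies in $\{-1,0,1\}$, and it equals the corresponding entry of the signed adjacency matrix $B(\mathcal{S}(\mathfrak{t}))$ of the special graph. Thus
\[
Sp(\mathfrak{t}) \;=\; \mathrm{diag}\bigl(-|N^f_{\mathfrak{t}}(x)|\bigr)_{x\in V_s(\mathfrak{t})} \;+\; B(\mathcal{S}(\mathfrak{t})),
\]
where the diagonal summand is entrywise nonpositive. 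Applying Lemma \ref{edgesignedtree} to $\mathcal{S}(\mathfrak{t})$ produces a diagonal matrix $D$ with $D^{-1}=D$ satisfying $DB(\mathcal{S}(\mathfrak{t}))D\le 0$. Since $D$ commutes with (and preserves) any diagonal summand, conjugating the above identity by $D$ yields $DSp(\mathfrak{t})D\le 0$, proving (i).

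For (ii), set $M:=-DSp(\mathfrak{t})D$. Then $M\ge 0$ and $M$ is similar to $-Sp(\mathfrak{t})$ because $D^{-1}=D$, so $\lambda_{\max}(M)=-\lambda_{\min}(\mathfrak{t})$. By Remark \ref{condition}(i), a tree-like Hoffman graph is indecomposable, and Lemma \ref{connected} then gives that $\mathcal{S}(\mathfrak{t})$ is connected. The off-diagonal support graph of $M$ coincides with $\mathcal{S}(\mathfrak{t})$, so $M$ is an irreducible nonnegative matrix, and Theorem \ref{perronfrobenius}(ii) forces $\lambda_{\max}(M)$ to have algebraic multiplicity one, whence so does $\lambda_{\min}(\mathfrak{t})$. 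The only non-routine step is the clean identification of the off-diagonal part of $Sp(\mathfrak{t})$ with $B(\mathcal{S}(\mathfrak{t}))$, which rests entirely on the at-most-one-common-fat-neighbor observation above; once that is in place, the switching trick and Perron--Frobenius dispatch both parts.
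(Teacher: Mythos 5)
Your proof is correct and follows essentially the same route as the paper: split into the exceptional case and the edge-signed-tree case via the preceding Proposition, apply Lemma \ref{edgesignedtree} to get the switching matrix $D$, and invoke Perron--Frobenius for the multiplicity. You merely supply two details the paper leaves implicit (the at-most-one-common-fat-neighbor observation identifying the off-diagonal part of $Sp(\mathfrak{t})$ with $B(\mathcal{S}(\mathfrak{t}))$, and irreducibility via indecomposability and Lemma \ref{connected}), which only strengthens the write-up.
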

\begin{proof}
If $\mathfrak{t}=$\raisebox{-0.3ex}{\includegraphics[scale=0.13]{photo3}}, $Sp(\mathfrak{t})=\begin{pmatrix}
-1 & -1 & -1 \\
-1 & -1 & -1 \\
-1 & -1 & -1
\end{pmatrix}$. If $\mathfrak{t}\neq$\raisebox{-0.3ex}{\includegraphics[scale=0.13]{photo3}}, its special graph $\mathcal{S}(\mathfrak{t})$ is an edge-signed tree $(T,\text{sgn})$ and for its special matrix $Sp(\mathfrak{t})$, there exists a real diagonal matrix $D$ such that $DSp(\mathfrak{t})D\leq0$ where $D^{-1}=D$ by Lemma \ref{edgesignedtree}, as each off-diagonal entry of $Sp(\mathfrak{t})$ has the same sign of the corresponding entry of the sign adjacency matrix of $(T,\text{sgn})$. Hence (i) holds. Since $-Sp(\mathfrak{t})$ is similar to a non-negative irreducible matrix, (ii) follows by Theorem \ref{perronfrobenius} (ii).
\end{proof}

\begin{pro}\label{smalleigenvalue}
Let $\mathfrak{t}_1=(T_1,l_1)$ and $\mathfrak{t}_2=(T_2,l_2)$ be tree-like Hoffman graphs with $\lambda_{\min}(\mathfrak{t}_{1}), \lambda_{\min}(\mathfrak{t}_{2})\geq-3$. Assume that the Hoffman graph $\mathfrak{t}=h_s(\mathfrak{t}_1,\mathfrak{t}_2)$ is tree-like. Then one of the following holds:
\begin{enumerate}
\item $(\lambda_{\min}(\mathfrak{t}_1)-\lambda_{\min}(\mathfrak{t}))(\lambda_{\min}(\mathfrak{t}_2)-\lambda_{\min}(\mathfrak{t}))<0$;
\item $\lambda_{\min}(\mathfrak{t}_1)= \lambda_{\min}(\mathfrak{t}_2)=\lambda_{\min}(\mathfrak{t})$.
\end{enumerate}
\end{pro}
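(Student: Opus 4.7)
The plan is to translate the problem into Perron-Frobenius theory by comparing $Sp(\mathfrak{t})$ with $Sp(\mathfrak{t}_1)$ and $Sp(\mathfrak{t}_2)$ and then invoking the diagonal similarity provided by Lemma~\ref{-similar}. Since $\mathfrak{t}$ is tree-like and therefore connected, Lemma~\ref{fsum} forces $\mathfrak{t}_1$ and $\mathfrak{t}_2$ to share exactly one fat vertex $f$; I write $Y_i := N^{s}_{\mathfrak{t}_i}(f)$ for its slim neighbourhood inside $\mathfrak{t}_i$. Unpacking the stripped-sum construction---removing $f$ kills each shared fat-neighbour contribution, while the direct sum already introduces a slim-slim edge for every pair in $Y_1 \times Y_2$---gives the rank-one identity
\[
  Sp(\mathfrak{t}) = \begin{pmatrix} Sp(\mathfrak{t}_1) & 0 \\ 0 & Sp(\mathfrak{t}_2) \end{pmatrix} + {\bf w}{\bf w}^T, \qquad {\bf w} = \begin{pmatrix} \mathbf{1}_{Y_1} \\ \mathbf{1}_{Y_2} \end{pmatrix}.
\]
This is the key structural observation; it reduces the problem to a Perron-Frobenius comparison once everything is brought into nonnegative form.

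By Lemma~\ref{-similar} I pick $\pm 1$-diagonal matrices $D_i$ with $M_i := -D_i\,Sp(\mathfrak{t}_i)\,D_i \geq 0$. Any two vertices of $Y_i$ share $f$, and in the tree-like $\mathfrak{t}_i$ they cannot share a second fat vertex, so $(Sp(\mathfrak{t}_i))_{y,y'} = -1$ for distinct $y,y' \in Y_i$; this forces the entries of $D_i$ indexed by $Y_i$ to share a common sign $\sigma_i$, and after replacing $D_2$ by $-D_2$ if necessary I may assume $\sigma_1 \sigma_2 = -1$. Setting $D = \mathrm{diag}(D_1, D_2)$ and $M := -D\,Sp(\mathfrak{t})\,D = \mathrm{diag}(M_1, M_2) - (D{\bf w})(D{\bf w})^T$, a short entry-by-entry check shows $M \geq 0$: the off-diagonal entries within a $Y_i$-block cancel to $0$, the diagonal entries at $y \in Y_i$ become $|N^f_{\mathfrak{t}_i}(y)| - 1 \geq 0$, and the cross entries equal $+\mathbf{1}_{Y_1}\mathbf{1}_{Y_2}^T$ thanks to the sign choice; outside the $Y_i$-blocks nothing changes. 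Since $\mathfrak{t}$ is indecomposable (Remark~\ref{condition}(i)) its special graph is connected (Lemma~\ref{connected}), so $M$ is irreducible, and Theorem~\ref{perronfrobenius} supplies positive Perron eigenvectors ${\bf v} = ({\bf v}^{(1)}, {\bf v}^{(2)})$ of $M$ and ${\bf v}_i$ of $M_i$ with Perron eigenvalues $\rho = -\lambda_{\min}(\mathfrak{t})$ and $\rho_i = -\lambda_{\min}(\mathfrak{t}_i)$.

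Writing $M{\bf v} = \rho {\bf v}$ blockwise produces
\[
  M_1 {\bf v}^{(1)} = \rho {\bf v}^{(1)} + (\alpha - \beta)\mathbf{1}_{Y_1}, \qquad M_2 {\bf v}^{(2)} = \rho {\bf v}^{(2)} - (\alpha - \beta)\mathbf{1}_{Y_2},
\]
with $\alpha := \sum_{y \in Y_1} v^{(1)}_y$ and $\beta := \sum_{y' \in Y_2} v^{(2)}_{y'}$. I finish by a case split on $\mathrm{sgn}(\alpha - \beta)$. When $\alpha > \beta$ the first equation gives $M_1 {\bf v}^{(1)} \geq \rho {\bf v}^{(1)}$ with strict inequality on $Y_1$, so Theorem~\ref{perronfrobenius}(i) yields $\rho \leq \rho_1$ and pairing with ${\bf v}_1 > 0$ forbids equality, giving $\rho < \rho_1$; symmetrically the second equation together with Theorem~\ref{perronfrobenius}(iv) gives $\rho > \rho_2$, which is alternative~(i). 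The case $\alpha < \beta$ is symmetric, and when $\alpha = \beta$ each ${\bf v}^{(i)}$ is a positive eigenvector of $M_i$, so uniqueness of the Perron eigenvector forces $\rho = \rho_1 = \rho_2$, which is alternative~(ii). The chief difficulty I expect is the bookkeeping in the second step---verifying $M \geq 0$ and the sign coordination across the shared vertex $f$ when $|Y_i|$ may exceed $1$---because that is what allows the Perron-Frobenius machinery to take over.
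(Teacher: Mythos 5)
Your proof is correct and follows essentially the same route as the paper: both arguments apply the diagonal similarity of Lemma \ref{-similar} to pass to nonnegative irreducible matrices (with the stripped fat vertex contributing the rank-one border you make explicit) and then conclude by a Perron--Frobenius comparison. The only difference is the direction of the test-vector step --- the paper normalizes and glues the Perron vectors of the two parts into an exact ``mixed'' eigenvector of the whole matrix and sandwiches, whereas you restrict the Perron vector of the whole matrix to the two blocks and compare against each part; your sign-coordination and nonnegativity checks across the shared fat vertex are sound, and the case split on $\alpha-\beta$ delivers exactly the stated dichotomy.
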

\begin{proof}
Suppose $V_s(\mathfrak{t}_{1})=\{x_1,\ldots,x_p\}$ and $V_s(\mathfrak{t}_{2})=\{y_1,\ldots,y_q\}$. As $\mathfrak{t}$ is a tree-like Hoffman graph, we may assume, by Lemma \ref{fsum}, that $|V_f(\mathfrak{t}_1)\cap V_f(\mathfrak{t}_2)|=|\{f\}|=1$ and the fat vertex $f$ is a leaf of $\mathfrak{t}_1$ with $V_{\mathfrak{t}_1}^s(f)=\{x_p\}$ and $V_{\mathfrak{t}_2}^s(f)=\{y_1,\ldots,y_{q_1}\}$.

Since $\mathfrak{t}_1$ and $\mathfrak{t}_2$ are tree-like Hoffman graphs, there exist real diagonal matrices $D_1$ and $D_2$ such that $D_1^{-1}=D_1$, $D_2^{-1}=D_2$, $D_1Sp(\mathfrak{t}_{1})D_1\leq0$ and $D_2Sp(\mathfrak{t}_{2})D_2\leq0$ all hold. For $i=1,2$, let $\widetilde{S}_i=-D_i^{-1}Sp(\mathfrak{t_i})D_i$ and $\lambda_i=\lambda_{\max}(\widetilde{S}_i)=-\lambda_{\min}(\mathfrak{t}_i)$. By Theorem \ref{perronfrobenius}, there exist vectors ${\bf u}$ and ${\bf v}$ such that ${\bf u},{\bf v}>0$, $\widetilde{S}_1{\bf u}=\lambda_1{\bf u}$ and $\widetilde{S}_2{\bf v}=\lambda_2{\bf v}$. We may also assume ${\bf u}_{x_p}=\sum_{i=1}^{q_1}{\bf v}_{y_i}$.

Let $Sp(\mathfrak{t})=\begin{pmatrix}\begin{smallmatrix}S_{11}& S_{12}\\ S_{21} & S_{22}\end{smallmatrix}\end{pmatrix}$, where $S_{11}=Sp(\mathfrak{t})|_{V_s(\mathfrak{t}_1)}$, $S_{22}=Sp(\mathfrak{t})|_{V_s(\mathfrak{t}_2)}$, and $S_{12}=S_{21}^T=\begin{pmatrix}\begin{smallmatrix}&  &  \text{\large{0}} & &  &  \\1 & \cdots & 1  & \cdots & 0\end{smallmatrix}\end{pmatrix}$. Then $\begin{pmatrix}\begin{smallmatrix}D_1 &  \\
& D_2\end{smallmatrix}\end{pmatrix}Sp(\mathfrak{t})\begin{pmatrix}\begin{smallmatrix}D_1 &  \\& D_2 \end{smallmatrix}\end{pmatrix}\leq0$ or $\begin{pmatrix}\begin{smallmatrix}-D_1 &  \\
& D_2\end{smallmatrix}\end{pmatrix}Sp(\mathfrak{t})\begin{pmatrix}\begin{smallmatrix}-D_1 &  \\& D_2 \end{smallmatrix}\end{pmatrix}\leq0$. Without loss of generality, we may assume that we are in the first case. Let
$\widetilde{S}=-\begin{pmatrix}\begin{smallmatrix}D_1 &  \\
& D_2\end{smallmatrix}\end{pmatrix}Sp(\mathfrak{t})\begin{pmatrix}\begin{smallmatrix}D_1 &  \\& D_2 \end{smallmatrix}\end{pmatrix}$ and $\lambda_0=\lambda_{\max}(\widetilde{S})=-\lambda_{\min}(\mathfrak{t})$. Then we have $\widetilde{S}\begin{pmatrix}\begin{smallmatrix}{\bf u} \\ {\bf v} \end{smallmatrix}\end{pmatrix}=\begin{pmatrix}\begin{smallmatrix}\lambda_1{\bf u} \\ \lambda_2{\bf v}\end{smallmatrix}\end{pmatrix}$. First, we assume $\lambda_1\geq\lambda_2$. Then
$\lambda_1\begin{pmatrix}\begin{smallmatrix}{\bf u} \\ {\bf v} \end{smallmatrix}\end{pmatrix}\geq\widetilde{S}\begin{pmatrix}\begin{smallmatrix}{\bf u} \\{\bf v} \end{smallmatrix}\end{pmatrix}=\begin{pmatrix}\begin{smallmatrix}\lambda_1{\bf u} \\ \lambda_2{\bf v} \end{smallmatrix}\end{pmatrix}\geq \lambda_2\begin{pmatrix}\begin{smallmatrix}{\bf u} \\ {\bf v} \end{smallmatrix}\end{pmatrix}$. This implies $\lambda_1\geq \lambda_0\geq\lambda_2$ by Theorem \ref{perronfrobenius}. Moreover, if $\lambda_0=\lambda_1$ or $\lambda_0=\lambda_2$, then $\begin{pmatrix}\begin{smallmatrix}{\bf u} \\{\bf v} \end{smallmatrix}\end{pmatrix}$ is an eigenvector of $\widetilde{S}$ for $\lambda_0$ and $\lambda_1=\lambda_2=\lambda_0$. The case $\lambda_{1}\leq \lambda_{2}$ follows in a similar fashion. This completes the proof.
\end{proof}

In general, we have the following proposition from Proposition \ref{smalleigenvalue}.

\begin{pro}\label{gesmalleigenvalue}
Let $\mathfrak{T}=\{\mathfrak{t}_i\mid i=1,\ldots,r\}$ be a family of tree-like Hoffman graphs with $\lambda_{\min}(\mathfrak{t}_{i})\geq-3$ for $i=1,\ldots,r$. Assume that the Hoffman graph $\mathfrak{t}=h_s(\mathfrak{T})=h_s(\mathfrak{t}_1,\ldots,\mathfrak{t}_r)$ is tree-like. Then the following holds:
\begin{enumerate}
\item $\min\{\lambda_{\min}(\mathfrak{t}_i)\mid i=1,\ldots,r\}\leq\lambda_{\min}(\mathfrak{t})\leq \max\{\lambda_{\min}(\mathfrak{t}_i)\mid i=1,\ldots,r\}$;
\item $\lambda_{\min}(\mathfrak{t})=-3$ if and only if $\lambda_{\min}(\mathfrak{t}_i)=-3$ for all $i\in\{1,\cdots,r\}$.
\end{enumerate}
\end{pro}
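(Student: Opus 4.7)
The plan is to induct on $r$. The base case $r = 1$ is trivial, and $r = 2$ is exactly Proposition \ref{smalleigenvalue}: its case (i) places $\lambda_{\min}(\mathfrak{t})$ strictly between $\lambda_{\min}(\mathfrak{t}_1)$ and $\lambda_{\min}(\mathfrak{t}_2)$, while case (ii) makes all three equal, so part (i) of the present statement holds in either event. The ``if'' half of part (ii) is immediate from (i). For the ``only if'' half at $r=2$, if $\lambda_{\min}(\mathfrak{t}) = -3$ while $\lambda_{\min}(\mathfrak{t}_1),\lambda_{\min}(\mathfrak{t}_2) \geq -3$, then $(\lambda_{\min}(\mathfrak{t}_1)+3)(\lambda_{\min}(\mathfrak{t}_2)+3) \geq 0$, which rules out case (i) of Proposition \ref{smalleigenvalue}; its case (ii) then forces $\lambda_{\min}(\mathfrak{t}_1) = \lambda_{\min}(\mathfrak{t}_2) = -3$.

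For the inductive step $r \geq 3$, I would peel off one summand and reduce to the case $r - 1$. To choose which one, I would introduce the auxiliary \emph{intersection graph} $\Gamma$ on vertex set $\{\mathfrak{t}_1, \ldots, \mathfrak{t}_r\}$, where $\mathfrak{t}_i$ is joined to $\mathfrak{t}_j$ whenever $V_f(\mathfrak{t}_i) \cap V_f(\mathfrak{t}_j) \neq \emptyset$. Using the five conditions of Lemma \ref{fsum}, $\Gamma$ is connected (since $\mathfrak{t}$ is connected and distinct pieces can only be glued together along shared fat vertices) and acyclic (a cycle in $\Gamma$ would produce, via paths internal to the pieces, a cycle in $\mathfrak{t}$ itself, with distinctness of the visited fat vertices guaranteed by conditions (iii) and (iv) of Lemma \ref{fsum}). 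Hence $\Gamma$ is a tree, and I would reindex so that $\mathfrak{t}_r$ is a leaf.

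With this choice, all five conditions of Lemma \ref{fsum} are inherited by $\{\mathfrak{t}_1, \ldots, \mathfrak{t}_{r-1}\}$, and $\mathfrak{t}' := h_s(\mathfrak{t}_1, \ldots, \mathfrak{t}_{r-1})$ is connected because $\Gamma - \mathfrak{t}_r$ remains a tree; hence $\mathfrak{t}'$ is tree-like. Condition (iv) of Lemma \ref{fsum} supplies the hypothesis of Remark \ref{condition}(ii), so $\mathfrak{t} = h_s(\mathfrak{t}', \mathfrak{t}_r)$. Now applying the inductive hypothesis to $\mathfrak{t}'$ together with Proposition \ref{smalleigenvalue} to $h_s(\mathfrak{t}', \mathfrak{t}_r)$ yields part (i) by chaining the two ``between'' inequalities for $\mathfrak{t}'$ and for $\mathfrak{t}$. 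For the ``only if'' half of part (ii), the same base-case argument shows $\lambda_{\min}(\mathfrak{t}') = \lambda_{\min}(\mathfrak{t}_r) = -3$, and then part (ii) of the inductive hypothesis applied to $\mathfrak{t}'$ delivers $\lambda_{\min}(\mathfrak{t}_i) = -3$ for all $i < r$.

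The main obstacle I anticipate is the combinatorial claim that $\Gamma$ is a tree: genuinely all of conditions (iii)--(v) of Lemma \ref{fsum} are needed to rule out ``hidden'' cycles coming from two pieces sharing multiple fat vertices or three pieces sharing one, and one must also verify that deleting a leaf of $\Gamma$ preserves the full list of conditions in Lemma \ref{fsum} for the remaining subfamily. Once this bookkeeping is in place, the analytic content of the proof is entirely supplied by Proposition \ref{smalleigenvalue} and the induction proceeds mechanically.
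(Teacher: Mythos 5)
Your proposal is correct and is essentially the paper's own argument: the paper states this proposition with no proof beyond the remark that it follows from Proposition \ref{smalleigenvalue}, and your induction on $r$ (peeling off a summand via Remark \ref{condition}(ii) and applying the two-summand case) is exactly the intended reduction, with the sign analysis of the product in part (ii) handled correctly. The only comment is that you do not actually need the intersection graph $\Gamma$ to be acyclic --- any non-cut vertex of $\Gamma$ (e.g.\ a leaf of a spanning tree) already lets you remove one summand while keeping the remaining stripped sum connected, so that part of your bookkeeping can be shortened.
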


\begin{pro}\label{degree3tree}
Let $\mathfrak{t}$ be a tree-like Hoffman graph with all internal vertices having valency $3$ and all its leaves fat. Then $\mathfrak{t}=h_s(\mathfrak{t}_1,\ldots,\mathfrak{t}_m)$, where $m=|V_s(\mathfrak{t})|$ and $\mathfrak{t}_i$ is isomorphic to \raisebox{-0.5ex}{\includegraphics[scale=0.13]{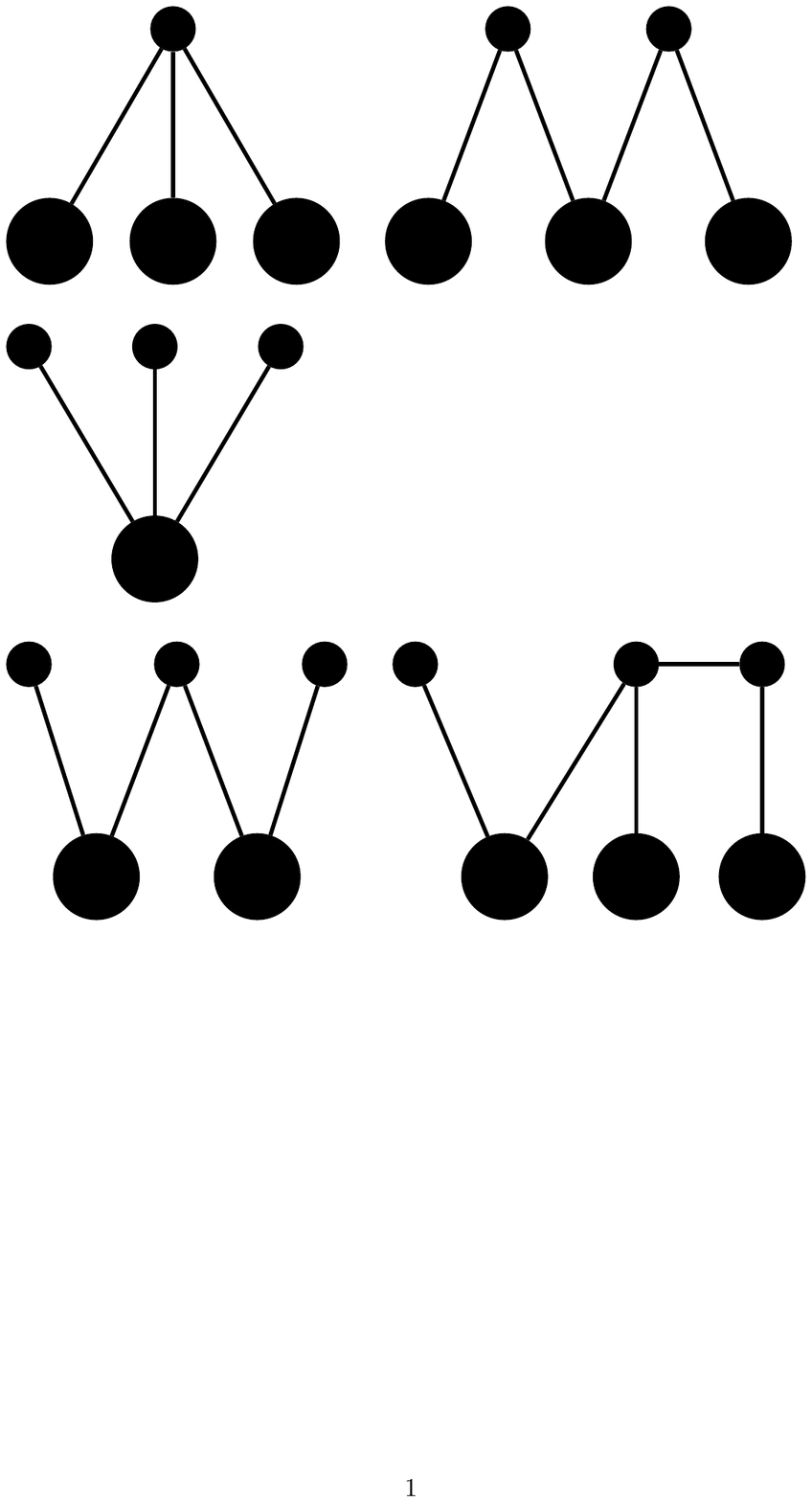}} for $i=1,\ldots,m$.

Moreover, $\mathfrak{t}$ is integrally representable of norm $3$ and $\lambda_{\min}(\mathfrak{t})=-3$.
\end{pro}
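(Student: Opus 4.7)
The strategy is to decompose $\mathfrak{t}$ as a stripped sum of copies of $\mathfrak{h}^{(3)}$ (i.e., photo1), one for each slim vertex, and then read off both the spectral statement and an integral representation from this decomposition.

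First, I analyze the local structure at a slim vertex. Since all leaves of $\mathfrak{t}$ are fat, every slim vertex is internal and therefore has valency exactly $3$; its three neighbors are each either a fat leaf of $\mathfrak{t}$ or another slim vertex. One also checks that under the hypothesis no fat vertex is internal: an internal fat vertex would have valency $3$ and hence three slim neighbors, and in the attempted decomposition would end up in the triple intersection of the three corresponding stars, contradicting condition~(iv) of Lemma~\ref{fsum}. So every fat vertex of $\mathfrak{t}$ is a leaf.

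For each slim vertex $x_i$ I take $\mathfrak{t}_i$ to be a copy of $\mathfrak{h}^{(3)}$, labeling its three fat leaves by the three neighbors of $x_i$ in $\mathfrak{t}$: a fat neighbor of $x_i$ in $\mathfrak{t}$ serves as the corresponding leaf of $\mathfrak{t}_i$, while for each slim neighbor $y$ of $x_i$ I introduce a virtual fat vertex $f_{\{x_i,y\}}$ which is simultaneously a leaf of $\mathfrak{t}_y$. Checking the five conditions of Lemma~\ref{fsum} is then routine: each $\mathfrak{t}_i$ is tree-like; $V_f(\mathfrak{t}_i)\cap V_f(\mathfrak{t}_j)$ is empty or the singleton $\{f_{\{x_i,x_j\}}\}$ (nonempty iff $x_i\sim x_j$ in $\mathfrak{t}$); triple intersections are empty because each edge of the tree involves exactly two slim endpoints; and every shared fat vertex is a leaf of both stars in which it sits. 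Hence $\mathfrak{t}=h_s(\mathfrak{t}_1,\dots,\mathfrak{t}_m)$.

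A direct computation gives $Sp(\mathfrak{h}^{(3)})=(-3)$ and therefore $\lambda_{\min}(\mathfrak{h}^{(3)})=-3$; Proposition~\ref{gesmalleigenvalue}(ii) then forces $\lambda_{\min}(\mathfrak{t})=-3$. For the integral representation, I introduce one coordinate for each fat leaf of $\mathfrak{t}$ and one for each slim-slim edge; I define $\phi(x)$ for a slim vertex $x$ to be the $\{0,1\}$-vector with a $1$ in each of the three coordinates indexed by the three neighbors of $x$ (a slim neighbor being identified with the coordinate of the slim-slim edge joining them), and I set $\phi(f)$ equal to the standard basis vector at the coordinate of the fat leaf $f$. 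Then $(\phi(x),\phi(x))=3$; two adjacent slim vertices share exactly the one coordinate corresponding to their connecting edge, giving inner product $1$; two non-adjacent slim vertices share no coordinate because the tree contains no cycle, giving $0$; and the slim--fat and fat--fat values are immediate. Thus $\phi$ is an integral representation of norm $3$.

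The main obstacle is the first step: one must use the hypothesis together with the $h_s$ constraints to rule out internal fat vertices and pin down the local star structure. Once this is settled, the stripped-sum bookkeeping and the verification of the representation are mechanical.
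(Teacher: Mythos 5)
Your overall route is genuinely different from the paper's. The paper proves the decomposition by induction on $m=|V_s(\mathfrak{t})|$: it picks a slim vertex $x$ with a unique slim neighbour $y$, attaches a new fat vertex adjacent to exactly $x$ and $y$ so that the enlarged graph splits as $\mathfrak{h}_1\oplus\mathfrak{h}_2$ with $\mathfrak{h}_2\cong\mathfrak{h}^{(3)}$, and then flattens the nested stripped sums via Remark~\ref{condition}(ii). You instead build the whole decomposition in one step, assigning a copy of $\mathfrak{h}^{(3)}$ to each slim vertex and a shared ``virtual'' fat vertex to each slim--slim edge. Both work; your version has the advantage of producing an explicit integral representation (one coordinate per fat leaf and per slim--slim edge), which is more self-contained than the paper's terse closing sentence, and the use of Proposition~\ref{gesmalleigenvalue}(ii) for $\lambda_{\min}(\mathfrak{t})=-3$ is exactly what the paper leaves implicit. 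One small imprecision: Lemma~\ref{fsum} only certifies that $h_s(\mathfrak{t}_1,\ldots,\mathfrak{t}_m)$ is \emph{tree-like}; to know that this stripped sum actually \emph{equals} $\mathfrak{t}$ you should also verify, via Lemma~\ref{directsumcombi}, that your ambient graph (the one with the virtual fat vertices added) really is the direct sum $\oplus_{i=1}^m\mathfrak{t}_i$ --- the check is immediate ($Sp=-3I$ since two adjacent slim vertices share exactly the one virtual fat vertex), but it is the check that matters.

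There is, however, one step that does not hold up as written: your exclusion of internal fat vertices is circular. You argue that an internal fat vertex ``would end up in the triple intersection of the three corresponding stars, contradicting condition (iv) of Lemma~\ref{fsum}''; but that only shows that \emph{your intended decomposition would fail} for such a $\mathfrak{t}$, not that such a $\mathfrak{t}$ cannot occur. The hypothesis ``all leaves are fat'' does not literally forbid a fat vertex of valency $3$ with three slim neighbours, each of which carries two further fat leaves; that graph satisfies the stated hypotheses, its special matrix is $-2I-J$ on three slim vertices, so $\lambda_{\min}=-5$, and the conclusion of the proposition fails for it. To be fair, the paper's own proof simply asserts ``note that all its internal vertices are slim'' with no justification, so you are in good company; but a clean fix requires either adding ``every fat vertex is a leaf'' to the hypotheses (as Proposition~\ref{generaldegree3tree} does explicitly), or importing the ambient assumption $\lambda_{\min}(\mathfrak{t})\geq -3$, under which the earlier unnumbered proposition of Section~3.2 shows every fat vertex has at most two slim neighbours and hence, given that all internal vertices have valency $3$, must be a leaf. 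As it stands, this step is the one genuine gap in your write-up.
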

\begin{proof}
We proceed by induction on the number of slim vertices of $\mathfrak{t}$. The base case $m=1$ is trivial. Suppose $m\geq2$. Note that all its internal vertices are slim and there exists a slim vertex $x$ having only one slim neighbor $y$. Now attach a fat vertex to be exactly the fat neighbor of $x$ and $y$. Then the resulting Hoffman graph $\mathfrak{h}=\mathfrak{h}_1\oplus\mathfrak{h}_2$ and $\mathfrak{t}=h_s(\mathfrak{h}_1,\mathfrak{h}_2)$, where $\mathfrak{h}_1$ is the tree-like Hoffman graph satisfying the condition of this proposition and $\mathfrak{h}_2$ is isomorphic to \raisebox{-0.5ex}{\includegraphics[scale=0.13]{photo1}} with the slim vertex $x$. By the induction hypothesis, we have $\mathfrak{h}_1=h_s(\mathfrak{t}_1,\ldots,\mathfrak{t}_{m-1})$. Hence $\mathfrak{t}=h_s(h_s(\mathfrak{t}_1,\ldots,\mathfrak{t}_{m-1}),\mathfrak{h}_2)=h_s(\mathfrak{t}_1,\ldots,\mathfrak{t}_m)$ by Remark \ref{condition} (ii). Considering that the Hoffman graph  \raisebox{-0.5ex}{\includegraphics[scale=0.13]{photo1}} is integrally representable of norm $3$ with smallest eigenvalue $-3$, the proposition holds.
\end{proof}

Using Proposition \ref{degree3tree}, we obtain the following result.
 \begin{pro}\label{generaldegree3tree}
 Let $\mathfrak{t}$ be a tree-like Hoffman graph with all internal vertices have valency at most $3$ and any fat vertex is a leaf. Then $\mathfrak{t}$ is an induced Hoffman subgraph of a tree-like Hoffman graph $\tilde{\mathfrak{t}}=h_s(\mathfrak{t}_1,\ldots,\mathfrak{t}_m)$, where $m=|V_s(\mathfrak{t})|=|V_s(\tilde{\mathfrak{t}})|$ and $\mathfrak{t}_i$ is isomorphic to \raisebox{-0.5ex}{\includegraphics[scale=0.13]{photo1}} for $i=1,\ldots,m$. In particular, $\mathfrak{t}$ is integrally representable of norm $3$.
 \end{pro}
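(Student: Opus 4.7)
The plan is to reduce the statement to Proposition \ref{degree3tree} by enlarging $\mathfrak{t}$ to a tree-like Hoffman graph $\tilde{\mathfrak{t}}$ in which \emph{every} internal vertex has valency exactly $3$ and \emph{every} leaf is fat, while keeping the set of slim vertices unchanged. Once such a $\tilde{\mathfrak{t}}$ is produced, Proposition \ref{degree3tree} directly yields the decomposition $\tilde{\mathfrak{t}} = h_s(\mathfrak{t}_1,\ldots,\mathfrak{t}_m)$ together with an integral representation of $\tilde{\mathfrak{t}}$ of norm $3$, and the claim for $\mathfrak{t}$ then follows because $\mathfrak{t}$ sits inside $\tilde{\mathfrak{t}}$ as an induced Hoffman subgraph.

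Concretely, I would construct $\tilde{\mathfrak{t}}$ as follows. For every slim vertex $x$ of $\mathfrak{t}$ with $\deg_{\mathfrak{t}}(x) < 3$, attach $3-\deg_{\mathfrak{t}}(x)$ new fat vertices, each adjacent only to $x$. Since we only add pendant vertices to a tree, $\tilde{\mathfrak{t}}$ remains tree-like, $V_s(\tilde{\mathfrak{t}}) = V_s(\mathfrak{t})$, and $\mathfrak{t}$ is an induced Hoffman subgraph of $\tilde{\mathfrak{t}}$. By construction every slim vertex of $\tilde{\mathfrak{t}}$ has valency exactly $3$; the leaves of $\tilde{\mathfrak{t}}$ are precisely the (already fat) leaves of $\mathfrak{t}$ together with the newly attached fat vertices, so every leaf of $\tilde{\mathfrak{t}}$ is fat and every internal vertex of $\tilde{\mathfrak{t}}$ is a slim vertex of valency $3$. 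Hence $\tilde{\mathfrak{t}}$ satisfies the hypothesis of Proposition \ref{degree3tree}.

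Applying Proposition \ref{degree3tree} to $\tilde{\mathfrak{t}}$ yields the required decomposition $\tilde{\mathfrak{t}} = h_s(\mathfrak{t}_1,\ldots,\mathfrak{t}_m)$ with each $\mathfrak{t}_i$ of the prescribed form, and produces an integral representation $\phi$ of $\tilde{\mathfrak{t}}$ of norm $3$. Restricting $\phi$ to $V(\mathfrak{t})$ gives a map $V(\mathfrak{t}) \to \mathbb{Z}^n$; since adjacencies and labels in $\mathfrak{t}$ are inherited from $\tilde{\mathfrak{t}}$ (induced Hoffman subgraph), the defining inner-product identities of an integral representation of norm $3$ hold for $\phi|_{V(\mathfrak{t})}$ as well, and therefore $\mathfrak{t}$ is integrally representable of norm $3$.

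The construction is almost entirely bookkeeping, so there is no serious obstacle; the only point worth double-checking is that a slim \emph{leaf} $x$ of $\mathfrak{t}$ (valency $1$) receives the required two additional fat pendants and thereby becomes an internal vertex of $\tilde{\mathfrak{t}}$ with valency $3$, as opposed to remaining a slim leaf. This guarantees that the resulting $\tilde{\mathfrak{t}}$ genuinely meets the ``all leaves are fat'' requirement of Proposition \ref{degree3tree}, after which the rest of the argument is immediate.
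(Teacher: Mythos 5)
Your proposal is correct and is precisely the reduction the paper intends: the paper states this proposition with only the remark ``Using Proposition \ref{degree3tree}, we obtain the following result,'' and your construction of $\tilde{\mathfrak{t}}$ by attaching $3-\deg_{\mathfrak{t}}(x)$ pendant fat vertices to each slim vertex $x$ fills in exactly the missing details. The verification that all leaves of $\tilde{\mathfrak{t}}$ become fat and all internal vertices become slim of valency $3$, including the case of slim leaves, is handled correctly.
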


\subsection{A family of $(-3)$-irreducible tree-like Hoffman graphs}
In this subsection, we will introduce an infinite family $\mathfrak{F}$ of tree-like Hoffman graphs, which are integrally representable of norm $3$. In Section \ref{sectree}, we will use the tree-like Hoffman graphs in this family to construct the integrally representable tree-like Hoffman graphs of norm $3$.

\begin{de}\label{c}
Let $m\geq2$ be a positive integer. We define $\mathfrak{c}_m$ to be the tree-like Hoffman graph with slim vertex set $V_s(\mathfrak{c}_m)$ and fat vertex set $V_f(\mathfrak{c}_m)$ as follows:
$$V_s(\mathfrak{c}_m)=\{y_1,y_2,\ldots,y_m\},~V_f(\mathfrak{c}_m)=\{f_1,f_{1,2},f_m\}, \text{ where the adjacency relation is }$$
$$y_i\sim y_{i+2},~i=1,2,\ldots,m-2,$$
$$f_1\sim y_1,~f_{1,2}\sim y_1,~f_{1,2}\sim y_2,f_m\sim y_m.$$
\end{de}
Note that $\mathfrak{c}_2$ is the tree-like Hoffman graph \raisebox{-0.5ex}{\includegraphics[scale=0.12]{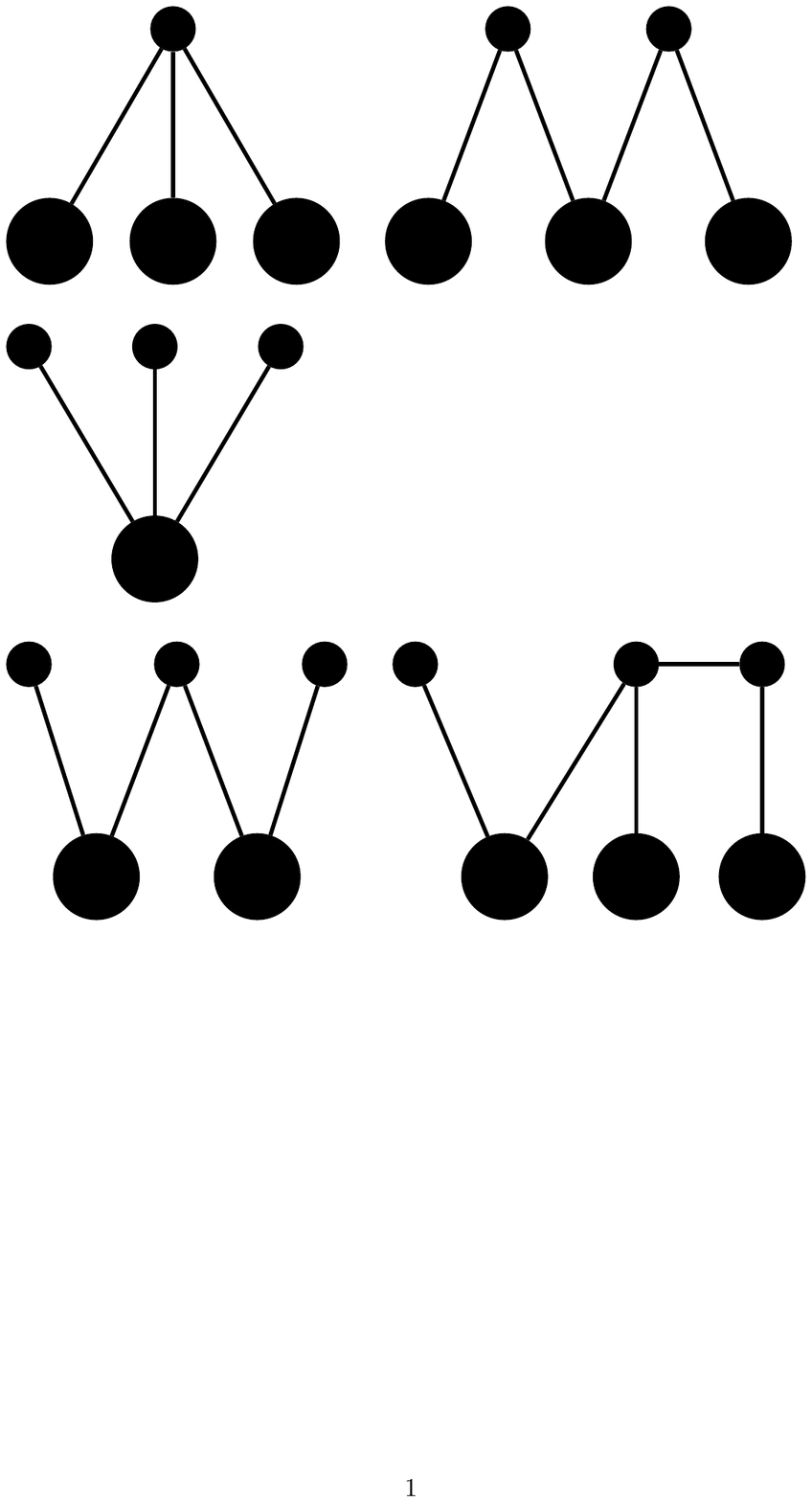}} and $\mathfrak{c}_3$ is the tree-like Hoffman graph \raisebox{-0.5ex}{\includegraphics[scale=0.12]{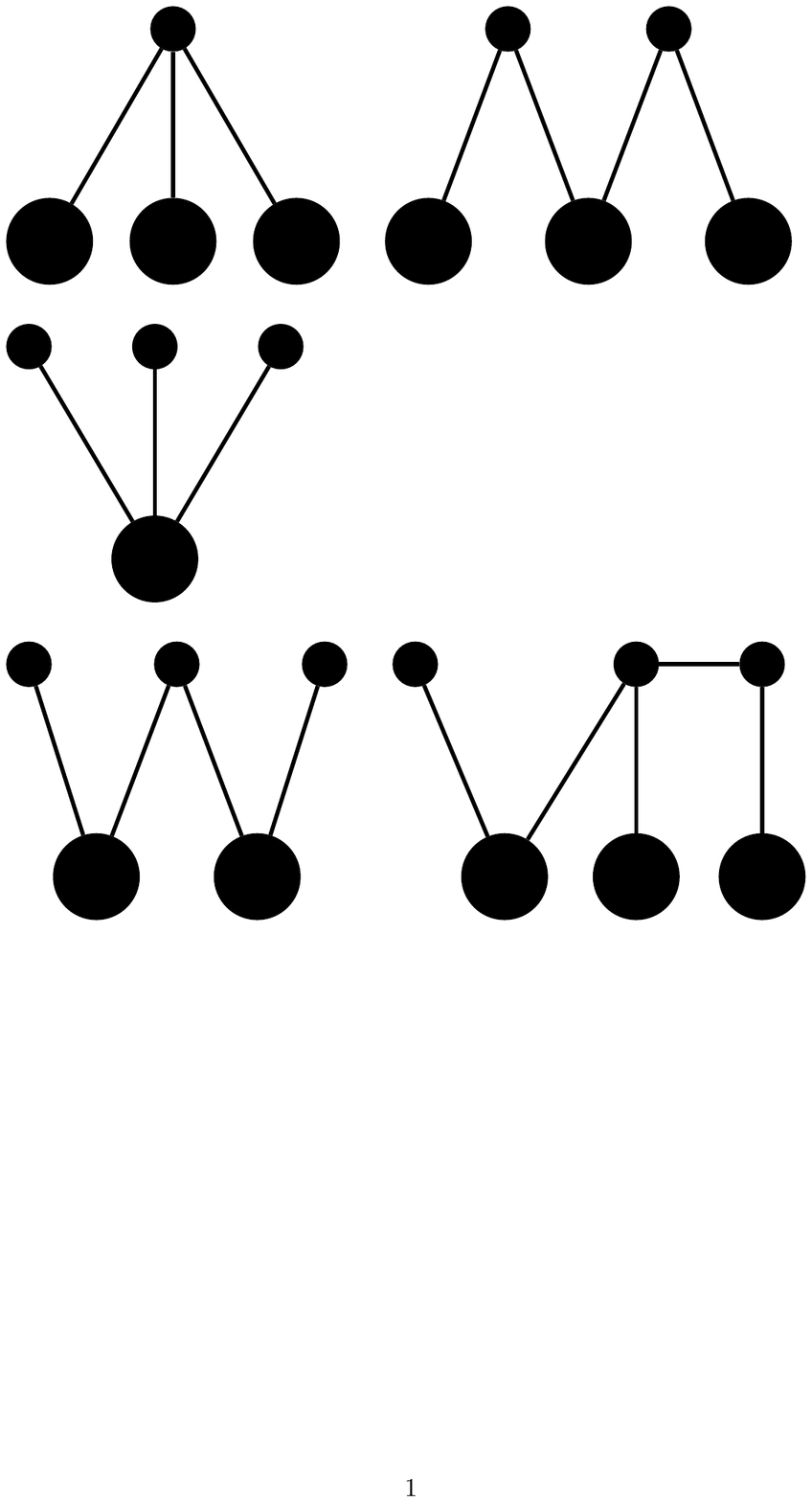}}.
\begin{de}\label{F}
We define the family $\mathfrak{F}$ of tree-like Hoffman graphs as the union of the families $\mathfrak{F}'$ and $\mathfrak{C}$, where $\mathfrak{F}'=\{\raisebox{-0.5ex}{\includegraphics[scale=0.12]{photo1},\includegraphics[scale=0.12]{photo3},
\includegraphics[scale=0.12]{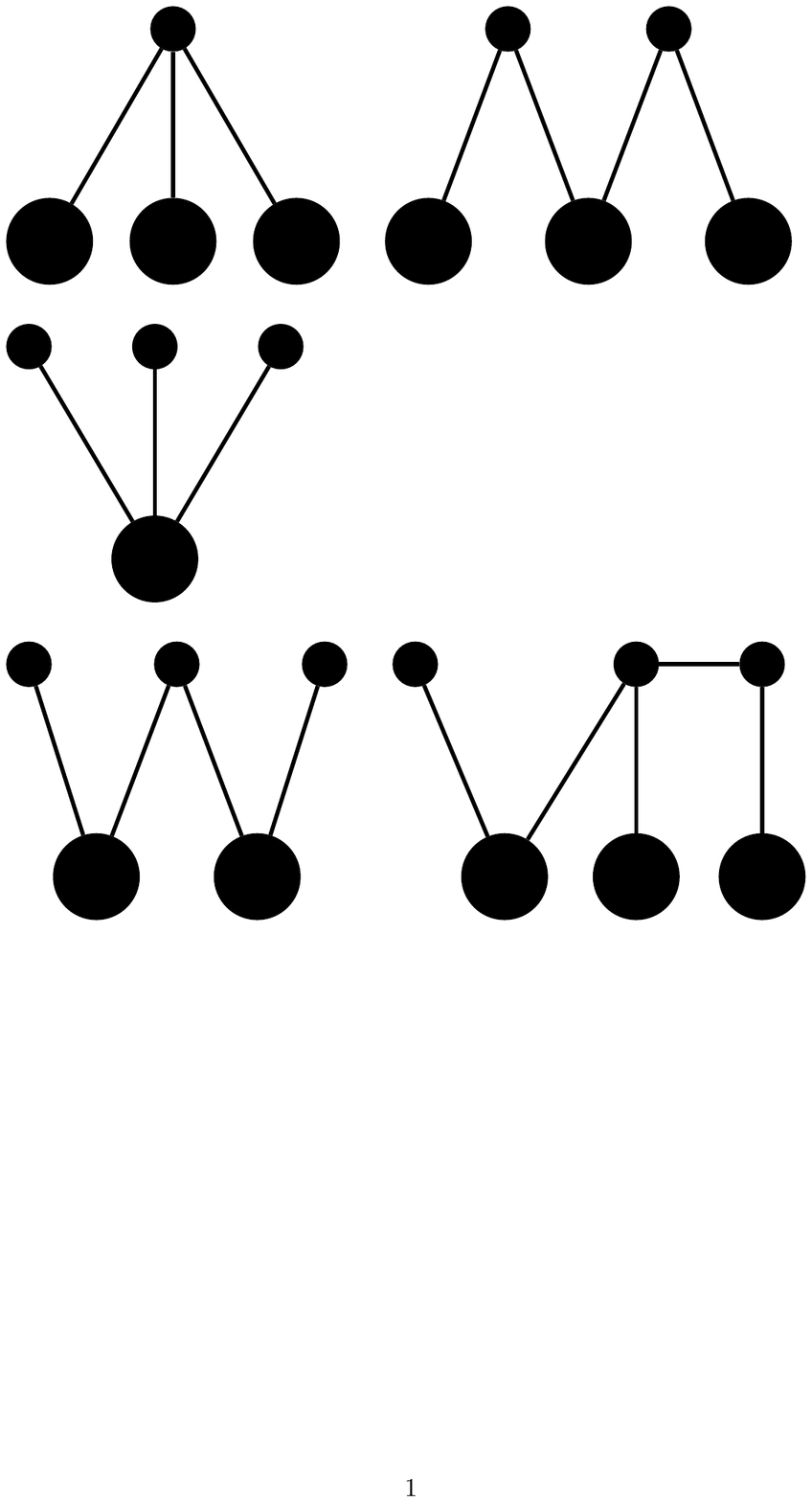}}\}$ and $\mathfrak{C}=\{\mathfrak{c}_i\mid i=2,3,\ldots\}$.
\end{de}

In Lemma \ref{lemF}, we will show that the tree-like Hoffman graphs in $\mathfrak{F}$ are $(-3)$-irreducible. Note that, by Definition \ref{irreducible}, if a Hoffman graph $\mathfrak{h}$ is $(-3)$-reducible, then there exists a way to attach fat vertices to $\mathfrak{h}$ such that the obtained Hoffman graph $\widetilde{\mathfrak{h}}$ is decomposable and satisfies $\lambda_{\min}(\widetilde{\mathfrak{h}})\geq-3$.

\begin{lem}\label{lemF}
Let $\mathfrak{t}$ be a tree-like Hoffman graph in $\mathfrak{F}$. Then $\lambda_{\min}(\mathfrak{t})=-3$. Moreover, $\mathfrak{t}$ is $(-3)$-irreducible and integrally representable of norm $3$.
\end{lem}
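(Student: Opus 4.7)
The plan is to split $\mathfrak{F}=\mathfrak{F}'\cup\mathfrak{C}$ where $\mathfrak{C}=\{\mathfrak{c}_m\mid m\geq 2\}$, handle the three fixed Hoffman graphs of $\mathfrak{F}'$ by direct inspection, and treat the infinite family $\mathfrak{C}$ with an explicit integral representation together with an eigenvector argument. For each member of $\mathfrak{F}'$, which are small fixed Hoffman graphs, I would write down the special matrix, exhibit an integral reduced representation in $\mathbb{Z}^n$ for small $n$, compute the spectrum to confirm $\lambda_{\min}=-3$, and verify $(-3)$-irreducibility by enumerating the finitely many ways of attaching a single additional fat vertex and checking that each either lowers the smallest eigenvalue below $-3$ or keeps the resulting Hoffman graph indecomposable. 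This is a finite case check.

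For the family $\mathfrak{c}_m$, I would first construct an explicit integral reduced representation $\psi\colon V_s(\mathfrak{c}_m)\to\mathbb{Z}^{m-1}$ inductively. A concrete choice is $\psi(y_1)=\mathbf{e}_1$, $\psi(y_2)=-\mathbf{e}_1+\mathbf{e}_2$, $\psi(y_k)=\mathbf{e}_{k-2}+\epsilon_k\mathbf{e}_{k-1}+\mathbf{e}_k$ for $3\le k\le m-1$ with $\epsilon_k=+1$ for $k$ odd and $\epsilon_k=-1$ for $k$ even, and $\psi(y_m)=\mathbf{e}_{m-2}+\epsilon_m\mathbf{e}_{m-1}$; the truncation for $y_m$ reflects $|N^f(y_m)|=1$. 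Routine verification of the inner products against Definition~\ref{reducedrep} (boundary cases involving $y_1,y_2,y_m$; adjacent pairs $y_k\sim y_{k+2}$; and non-adjacent pairs at larger distance) confirms $\psi$ is an integral representation of norm $3$, so by Lemma~\ref{relation}, $\lambda_{\min}(\mathfrak{c}_m)\geq -3$. Moreover, the $m$ vectors $\psi(y_1),\ldots,\psi(y_m)$ live in the $(m-1)$-dimensional space $\mathbb{Z}^{m-1}$ and are therefore linearly dependent, so the Gram matrix $Sp(\mathfrak{c}_m)+3I$ is singular, forcing $\lambda_{\min}(\mathfrak{c}_m)\le -3$; equality follows.

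For $(-3)$-irreducibility, I would assume for contradiction that $\mathfrak{c}_m$ is $(-3)$-reducible. By the remark immediately before the lemma we may take an extension $\widetilde{\mathfrak{h}}$ of $\mathfrak{c}_m$ obtained by attaching only fat vertices $g_1,\ldots,g_\ell$ (with slim-neighborhood indicator vectors $v_{g_j}\in\{0,1\}^{V_s(\mathfrak{c}_m)}$) such that $\widetilde{\mathfrak{h}}=\widetilde{\mathfrak{h}}_1\oplus\widetilde{\mathfrak{h}}_2$, $\lambda_{\min}(\widetilde{\mathfrak{h}})\geq -3$, and $V_s(\widetilde{\mathfrak{h}}_i)\cap V_s(\mathfrak{c}_m)\neq\emptyset$. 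Lemma~\ref{-similar}(ii) together with Theorem~\ref{perronfrobenius} applied to the nonnegative irreducible matrix $-D\,Sp(\mathfrak{c}_m)\,D$ shows that the $-3$-eigenspace of $Sp(\mathfrak{c}_m)$ is one-dimensional, spanned by a vector $\mathbf{v}$ whose entries are all nonzero. From $Sp(\widetilde{\mathfrak{h}})+3I=(Sp(\mathfrak{c}_m)+3I)-\sum_j v_{g_j}v_{g_j}^T\succeq 0$, testing on $\mathbf{v}$ forces $v_{g_j}^T\mathbf{v}=\sum_{y\in N^s(g_j)}\mathbf{v}_y=0$ for every $j$; consequently $\mathbf{v}$ is also a $-3$-eigenvector of $Sp(\widetilde{\mathfrak{h}})$, and since $Sp(\widetilde{\mathfrak{h}})$ is block diagonal with respect to $V_s(\widetilde{\mathfrak{h}}_1)\sqcup V_s(\widetilde{\mathfrak{h}}_2)$, $\mathbf{v}$ restricts to a nonzero $-3$-eigenvector of each $Sp(\widetilde{\mathfrak{h}}_i)$ supported on every slim vertex of $\widetilde{\mathfrak{h}}_i$. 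I would then combine this with Lemma~\ref{directsumcombi}(iv): the original fat $f_{1,2}$ shared by the non-adjacent pair $y_1,y_2$ forces $y_1,y_2$ into the same block; each crossing slim-adjacent pair $y_i\sim y_{i+2}$ must be covered by some valid $g_j$ with $\{y_i,y_{i+2}\}\subseteq N^s(g_j)$; and analysis of which subsets $S\subseteq V_s(\mathfrak{c}_m)$ admit $\sum_{y\in S}\mathbf{v}_y=0$ shows that every such $g_j$ necessarily contains a third slim vertex $y_\ell\in N^s(g_j)$ that is slim-non-adjacent to at least one of $y_i,y_{i+2}$, which by Lemma~\ref{directsumcombi}(iv) must then lie in the same block as both. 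Propagating these forced identifications collapses $V_s(\mathfrak{c}_m)$ onto a single block, contradicting the hypothesis that both blocks meet it.

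The main obstacle is the final propagation step. Concretely, one has to pin down, for arbitrary $m$, the admissible slim neighborhoods $S\subseteq V_s(\mathfrak{c}_m)$ satisfying $\sum_{y\in S}\mathbf{v}_y=0$ and show that each admissible $S$ is rich enough in slim-non-adjacent pairs to collapse every candidate cut. I expect the cleanest route is to compute $\mathbf{v}$ explicitly from the path-like structure of $-D\,Sp(\mathfrak{c}_m)\,D$ (whose spectral radius is $3$), use the resulting sign pattern and growth of $|\mathbf{v}_{y_k}|$ along each of the two chains of $\mathfrak{c}_m$, and then carry out a short case analysis or an induction on $m$ to conclude.
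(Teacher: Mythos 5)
Your treatment of the first two assertions is essentially the paper's: the same explicit reduced representation $\psi_{\mathfrak{c}_m}$ into $\mathbb{Z}^{m-1}$, the same observation that $m$ vectors in an $(m-1)$-dimensional space force $Sp(\mathfrak{c}_m)+3I$ to be singular, hence $\lambda_{\min}(\mathfrak{c}_m)=-3$, and a finite check for $\mathfrak{F}'$. That part is fine.

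The $(-3)$-irreducibility argument, however, has a genuine gap exactly where you flag it, and the one concrete inference you do state in the propagation step does not follow. You correctly derive that every attached fat vertex $g_j$ must satisfy $\sum_{y\in N^s(g_j)}\mathbf{v}_y=0$ and that $\mathbf{v}$ (with full support, by Lemma \ref{-similar} and Theorem \ref{perronfrobenius}) restricts to a $(-3)$-eigenvector of each block; but this is only a necessary condition on the $g_j$, and everything now hinges on showing that no admissible system of $g_j$'s can realize a nontrivial cut. Your claim that a third slim vertex $y_\ell\in N^s(g_j)$ non-adjacent to \emph{at least one} of $y_i,y_{i+2}$ ``must then lie in the same block as both'' is not what Lemma \ref{directsumcombi}(iv) gives: non-adjacency to $y_i$ only forces $y_\ell$ into the block of $y_i$, and since $y_i$ and $y_{i+2}$ are by hypothesis in different blocks, nothing lies in the block of both. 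So the collapse is not established, and the required analysis of the zero-sum subsets of $\mathbf{v}$ along the two chains of $\mathfrak{c}_m$ is left entirely open. The paper avoids all of this with a shorter argument: take a reducing extension $\tilde{\mathfrak{c}}_m$ with the \emph{fewest} attached fat vertices, let $m'$ be the least index at which a new fat vertex is attached (one checks $2\leq m'<m$), and observe that the indecomposable factor containing $y_1$ then contains, as an induced Hoffman subgraph, $\mathfrak{c}_{m'}$ together with one extra slim vertex $y_{m'+1}$ attached to $y_{m'-1}$. By Lemma \ref{-similar}(i) and the strict monotonicity in Theorem \ref{perronfrobenius}(v), this proper enlargement of $\mathfrak{c}_{m'}$ has smallest eigenvalue strictly below $-3$, contradicting $\lambda_{\min}(\tilde{\mathfrak{c}}_m)\geq-3$. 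If you want to keep your eigenvector route, you must actually compute the sign pattern and magnitudes of $\mathbf{v}$ and classify the zero-sum neighborhoods; otherwise I would recommend switching to the minimal-extension argument.
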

\begin{proof}
If $\mathfrak{t}\in\mathfrak{F}'$, the result is obvious. Now we assume that $\mathfrak{t}=\mathfrak{c}_m\in\mathfrak{C}$.

First, we prove that $\lambda_{\min}(\mathfrak{c}_m)=-3$. It is easy to obtain that $\lambda_{\min}(\mathfrak{c}_2)=-3$. If $m\geq3$, define the reduced representation $\psi_{\mathfrak{c}_m}$ of $\mathfrak{c}_m$ of norm $3$ as follows:
$$\psi_{\mathfrak{c}_m}(y_1)={\bf e}_1~,\psi_{\mathfrak{c}_m}(y_2)=-{\bf e}_1+{\bf e}_2~,$$
$$ \psi_{\mathfrak{c}_m}(y_i)={\bf e}_{i-2}+(-1)^{i-1}{\bf e}_{i-1}+{\bf e}_i,~i=3,\ldots,m-1,$$
$$\psi_{\mathfrak{c}_m}(y_m)={\bf e}_{m-2}+(-1)^{m-1}{\bf e}_{m-1}.$$
This shows that $\mathfrak{c}_m$ is integrally representable of norm $3$. Let $N$ be the matrix whose ${y_i}^{th}$ column is equal to $\psi_{\mathfrak{c}_m}(y_i)$, for $i=1,...,m$. Then $Sp(\mathfrak{c}_m)+3I=N^TN$ and $\lambda_{\min}(\mathfrak{c}_m)\geq-3$. Note that $N$ is an $(m-1)\times m$ matrix. This implies that the rank of the matrix $Sp(\mathfrak{c}_m)+3I$ is at most $m-1$ and $-3$ is an eigenvalue of $\mathfrak{c}_m$. Hence $\lambda_{\min}(\mathfrak{c}_m)=-3$.

Now we show that $\mathfrak{c}_m$ is $(-3)$-irreducible. Suppose this is not the case. Let $\tilde{\mathfrak{c}}_m$ be the decomposable Hoffman graph with $\lambda_{\min}(\tilde{\mathfrak{c}}_m)\geq-3$, which is obtained by attaching smallest number of fat vertices to $\mathfrak{c}_m$. Let $\tilde{\mathfrak{c}}_{m,1}$ be the indecomposable factor of $\tilde{\mathfrak{c}}_m$ with $y_1\in V_s(\tilde{\mathfrak{c}}_{m,1})$. Note that there is no fat vertex $f\in V_f(\tilde{\mathfrak{c}}_{m,1})-V_f(\mathfrak{c}_m)$ such that $f\in N_{\tilde{\mathfrak{c}}_{m,1}}^f(y_i,y_j)$ with $i-j=1(\mod 2)$, where $y_i,y_j\in V_s(\tilde{\mathfrak{c}}_{m,1})$. Let $m'=\min\{i\mid y_i$ has a fat neighbor in the set $V_f(\tilde{\mathfrak{c}}_{m,1})-V_f(\mathfrak{c}_m)\}$. It is easy to check that $2\leq m'< m$. Then $\tilde{\mathfrak{c}}_{m,1}$ has the tree-like Hoffman graph $\mathfrak{c}_{m'}(y_{m'+1})$, which is obtained by attaching slim vertex $y_{m'+1}$ to $\mathfrak{c}_{m'}$ as a slim neighbor of the vertex $y_{m'-1}$, as an induced Hoffman subgraph. By Lemma \ref{-similar} (i) and Theorem \ref{perronfrobenius} (v), we have that $\lambda_{\min}(\mathfrak{c}_{m'})=-3>\lambda_{\min}(\mathfrak{c}_{m'}(y_{m'+1}))$. Hence $-3>\lambda_{\min}(\mathfrak{c}_{m'}(y_{m'+1}))\geq\lambda_{\min}(\tilde{\mathfrak{c}}_{m,1})\geq\lambda_{\min}(\tilde{\mathfrak{c}}_m)=-3$. This gives a contradiction and the lemma holds.
\end{proof}

\begin{re}
The tree-like Hoffman graph $\mathfrak{c}_m$ has a unique integral representation of norm $3$, up to isomorphism.
\end{re}

\section{Integrally representable tree-like Hoffman graphs}\label{sectree}
In this section, we will study integrally representable tree-like Hoffman graphs of norm $3$ and in Corollary \ref{integraltree}, we will show that each of them is a stripped Hoffman graph of a finite family of tree-like Hoffman graphs. Now we start with the following lemma.

\begin{lem}\label{step1}
Let $\mathfrak{t}$ be an integrally representable tree-like Hoffman graph of norm $3$. Then there exists a family of tree-like Hoffman graphs $\mathfrak{T}=\{\mathfrak{t}_1$, \ldots, $\mathfrak{t}_{r(\mathfrak{t})}\}$ such that $\mathfrak{t}$ is an induced Hoffman subgraph of the Hoffman graph $\oplus_{i=1}^{r(\mathfrak{t})}\mathfrak{t}_i$, where
\begin{enumerate}
\item $\mathfrak{t}_i$ is integrally representable of norm $3$;
\item each Hoffman graph $\mathfrak{h}_i$ that is integrally representable of norm $3$ and contains $\mathfrak{t}_i$ as an induced Hoffman subgraph with $V_s(\mathfrak{h}_i)=V_s(\mathfrak{t}_i)$ is indecomposable;
\item the Hoffman graph $\oplus_{i=1}^{r(\mathfrak{t})}\mathfrak{t}_i$ has the same slim graph as $\mathfrak{t}$.
\end{enumerate}
\end{lem}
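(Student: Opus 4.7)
I proceed by induction on $|V_s(\mathfrak{t})|$. The base case $|V_s(\mathfrak{t})|=1$ is trivial: set $r(\mathfrak{t})=1$ and $\mathfrak{t}_1 = \mathfrak{t}$, since every Hoffman graph with a single slim vertex is indecomposable, giving (ii); properties (i) and (iii) are immediate. For the inductive step I distinguish two cases according to whether $\mathfrak{t}$ itself already satisfies (ii). If it does, again take $r(\mathfrak{t})=1$ and $\mathfrak{t}_1=\mathfrak{t}$. If it does not, the failure of (ii) produces a Hoffman graph $\widetilde{\mathfrak{t}}$ containing $\mathfrak{t}$ as an induced Hoffman subgraph, integrally representable of norm $3$, with $V_s(\widetilde{\mathfrak{t}}) = V_s(\mathfrak{t})$, that decomposes nontrivially as $\widetilde{\mathfrak{t}} = \mathfrak{h}^1 \oplus \mathfrak{h}^2$. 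This induces a partition $V_s(\mathfrak{t}) = V_s^1 \sqcup V_s^2$ with both parts nonempty.

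The key geometric observation, using that $\mathfrak{t}$ is tree-like, is that Lemma \ref{directsumcombi}(iii)-(iv) together with the absence of triangles in $\mathfrak{t}$ forces every fat vertex of $\mathfrak{t}$ to have all its slim neighbors inside one part: a shared fat $f$ of $x \in V_s^1$ and $y \in V_s^2$ would make $x, y$ adjacent by (iv), yielding a triangle $\{f, x, y\}$ in $\mathfrak{t}$. I use this to construct Hoffman graphs $\mathfrak{t}^1, \mathfrak{t}^2$: on side $i$, take $V_s^i$ as the slim vertex set, keep those original fat vertices of $\mathfrak{t}$ whose slim neighbors lie in $V_s^i$, and for each slim edge $xy$ of $\mathfrak{t}$ crossing the partition, attach a new leaf fat vertex $f_{xy}$ at $x$ in $\mathfrak{t}^1$ and at $y$ in $\mathfrak{t}^2$ (treating the two copies as a single shared fat vertex in the eventual direct sum). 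Since deleting the crossing slim edges from the underlying tree of $\mathfrak{t}$ leaves a forest and leaf fats create no cycles, each $\mathfrak{t}^i$ breaks into connected tree-like components. Each such component is an induced Hoffman subgraph of $\widetilde{\mathfrak{t}}$, hence integrally representable of norm $3$ by Lemma \ref{relation}, and has strictly fewer slim vertices than $\mathfrak{t}$, so the induction hypothesis yields a family for it satisfying (i)-(iii). I take $\mathfrak{T}$ to be the union of these recursive families.

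Properties (i) and (ii) for each member of $\mathfrak{T}$ are intrinsic and come directly from the inductive hypothesis. The main obstacle is checking that the direct sum $\bigoplus_{\mathfrak{t}_j \in \mathfrak{T}} \mathfrak{t}_j$ really contains $\mathfrak{t}$ as an induced Hoffman subgraph with the same slim graph, i.e.\ condition (iii). This is a combinatorial verification via Lemma \ref{directsumcombi}(iv): the crossing fats $f_{xy}$, each shared between exactly one component on each side of the first-level partition (and analogously at deeper recursion levels), recreate precisely the crossing slim-slim adjacencies of $\mathfrak{t}$ without introducing spurious ones, while slim adjacencies inside each component are preserved by the recursive instance of (iii). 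Every fat vertex of $\mathfrak{t}$ sits inside a unique component (by the key observation iterated at every level), hence appears in the final direct sum with its $\mathfrak{t}$-slim-neighborhood intact, so all slim-fat edges of $\mathfrak{t}$ are also recovered and $\mathfrak{t}$ is induced in $\bigoplus_j \mathfrak{t}_j$.
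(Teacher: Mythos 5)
Your overall strategy---induct on the number of slim vertices, pass to a nontrivially decomposable integrally representable ambient Hoffman graph, split the slim vertices along that decomposition, and recurse on the resulting tree-like pieces---is the same as the paper's, and your key observation that no fat vertex of $\mathfrak{t}$ can have slim neighbors in both parts (via Lemma \ref{directsumcombi}(iv) and the absence of triangles) is correct. The gap lies in how you encode the crossing slim edges. You attach one \emph{new} leaf fat vertex $f_{xy}$ per crossing edge $xy$, but in $\widetilde{\mathfrak{t}}=\mathfrak{h}^1\oplus\mathfrak{h}^2$ a single shared fat vertex $f$ may serve several crossing edges with a common endpoint: if $f\sim x$ with $x\in V_s^1$ and $f\sim y_1,\ldots,y_k$ with $y_i\in V_s^2$, then by Lemma \ref{directsumcombi}(iv) all $k$ edges $xy_i$ are crossing edges witnessed by the same $f$. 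Your construction then gives $x$ the $k$ distinct fat leaves $f_{xy_1},\ldots,f_{xy_k}$ where $\widetilde{\mathfrak{t}}$ has only the single vertex $f$, so the component of $\mathfrak{t}^1$ containing $x$ is \emph{not} an induced Hoffman subgraph of $\widetilde{\mathfrak{t}}$ and the sentence ``hence integrally representable of norm $3$'' is unjustified. (Separately, Lemma \ref{relation} only produces a real representation; integral representability of an induced Hoffman subgraph follows instead by restricting the integral representation of the ambient graph---which again requires the piece to actually be induced.)

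The failure is not merely cosmetic. Let $\mathfrak{t}$ be the star with slim center $x$, slim leaves $u,y_1,y_2$ and one fat leaf $g$ at $x$; it is integrally representable of norm $3$, e.g.\ $\phi(x)={\bf e}_1+{\bf e}_2+{\bf e}_g$, $\phi(u)={\bf e}_1+{\bf e}_4+{\bf e}_5$, $\phi(y_1)={\bf e}_2+{\bf e}_6+{\bf e}_7$, $\phi(y_2)={\bf e}_2-{\bf e}_6+{\bf e}_8$. Attaching one fat vertex $f$ adjacent to $x,y_1,y_2$ gives a decomposable integrally representable $\widetilde{\mathfrak{t}}$ with parts $\{x,u\}$ and $\{y_1,y_2\}$, and both crossing edges $xy_1,xy_2$ are served by this single $f$. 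Your $\mathfrak{t}^1$ then has $x$ with three fat neighbors $g,f_{xy_1},f_{xy_2}$ and a slim neighbor $u$ sharing no fat vertex with $x$; by Lemma \ref{maximalweight} together with Lemma \ref{directsumcombi}(iv) this piece has smallest eigenvalue strictly less than $-3$, so it is not integrally representable of norm $3$ and the induction hypothesis cannot be applied to it. The repair is what the paper does: instead of inventing a fat vertex per crossing edge, keep the genuine fat vertices of the ambient decomposition that are either fat vertices of $\mathfrak{t}$ or shared between two factors, restricted to each indecomposable factor. Each resulting piece is then an honest induced Hoffman subgraph of the ambient graph (hence integrally representable by restriction), and a shared fat vertex is a leaf on at least one side while retaining all of its slim neighbors on the other.
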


\begin{proof} The proof is given by induction on the number $m$ of slim vertices of $\mathfrak{t}$. If $m=1$, the result is obviously true. So we may assume that $m\geq2$ and there exists a decomposable and integrally representable Hoffman graph $\mathfrak{g}$ of norm $3$, that contains $\mathfrak{t}$ as an induced Hoffman subgraph with $V_s(\mathfrak{g})=V_s(\mathfrak{t})$, as otherwise $\mathfrak{T}=\{\mathfrak{t}\}$. Suppose $\mathfrak{g}=\oplus_{i=1}^{r}\mathfrak{g}_i$, where $\mathfrak{g}_i$ is the indecomposable factor of $\mathfrak{g}$ for $i=1,\ldots,r$ with $r\geq2$. Note that for each $i$, the slim graph of $\mathfrak{g}_i$ is a tree or forest. Let $\mathfrak{g}_i'$ be the induced subgraph of $\mathfrak{g}_i$, where $V_s(\mathfrak{g}_i')=V_s(\mathfrak{g}_i)$ and $V_f(\mathfrak{g}_i')=\{f\in V_f(\mathfrak{g}_i)\mid f\in V_f(\mathfrak{t})$ or there exists $j\neq i$ such that $f\in V_f(\mathfrak{g}_j)\}$. As a consequence, $\mathfrak{g}_i'$ is a disjoint union of tree-like Hoffman graphs, that is $\mathfrak{g}_{i}'=\cup_{j=1}^{r_i}\mathfrak{t}_{i,j}=\oplus_{j=1}^{r_i}\mathfrak{t}_{i,j}$, where $\mathfrak{t}_{i,j}$ is an integrally representable tree-like Hoffman graph of norm $3$. By the induction hypothesis, we have that $\mathfrak{t}_{i,j}$ is an induced Hoffman subgraph of the Hoffman graph $\oplus_{k=1}^{r(\mathfrak{t}_{i,j})}\mathfrak{t}_{i,j,k}$, where $\oplus_{k=1}^{r(\mathfrak{t}_{i,j})}\mathfrak{t}_{i,j,k}$ has the same slim graph as $\mathfrak{t}_{i,j}$ and $\mathfrak{t}_{i,j,k}$ satisfies (i) and (ii). Hence $\mathfrak{t}$ is an induced Hoffman subgraph of Hoffman graph $\oplus_{i=1}^{r}\oplus_{j=1}^{r_i}\oplus_{k=1}^{r(\mathfrak{t}_{i,j})}\mathfrak{t}_{i,j,k}$, which satisfies (i)-(iii) and this completes the proof.
\end{proof}

For the proof of the next theorem, we need to introduce the definition of the \emph{weighted graph}. A weighted graph is a pair $(G,w)$ of a graph $G$ and a weight function $w:V(G) \to \mathbb{Z}_{\geq 0}$. The \emph{weighted special $(-)$-graph} of a Hoffman graph $\frak{h}$ is the weighted graph $(S^-(\mathfrak{h}),w)$, where $w(x):=|\{f: f\ \text{ is a fat neighbor of } x \text{ in } \frak{h}\}|$.

\begin{thm}\label{step2}
Let $\mathfrak{T}$ be the family of non-isomorphic tree-like Hoffman graphs $\mathfrak{t}$ which satisfy the following properties:
 \begin{enumerate}
 \item $\mathfrak{t}$ is integrally representable of norm $3$;
 \item each Hoffman graph $\mathfrak{h}$ that is integrally representable of norm $3$ and contains $\mathfrak{t}$ as an induced Hoffman subgraph with $V_s(\mathfrak{h})=V_s(\mathfrak{t})$ is indecomposable.
 \end{enumerate}
Then the maximal tree-like Hoffman graphs in $\mathfrak{T}$ are isomorphic to one of the Hoffman graphs in $\mathfrak{F}$.
\end{thm}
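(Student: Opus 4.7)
The plan is to take an arbitrary maximal tree-like $\mathfrak{t}\in\mathfrak{T}$ and pin down its structure by matching it against the classification of $(-3)$-saturated indecomposable Hoffman graphs provided by Theorem~\ref{lattice}(i). Applying condition (ii) of the definition of $\mathfrak{T}$ with $\mathfrak{h}=\mathfrak{t}$ shows $\mathfrak{t}$ is indecomposable, so Lemma~\ref{contained} produces a fat, $(-3)$-saturated integral extension $\widetilde{\mathfrak{t}}$ with $V_s(\widetilde{\mathfrak{t}})=V_s(\mathfrak{t})$ and the same slim graph as $\mathfrak{t}$. A second application of (ii) forces $\widetilde{\mathfrak{t}}$ itself to be indecomposable, so Theorem~\ref{lattice}(i) yields $S^-(\widetilde{\mathfrak{t}})\in\{A_m,D_m,\widetilde{A}_{m-1},\widetilde{D}_{m-1}\}$, where $m=|V_s(\mathfrak{t})|$.

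Two preliminary simplifications clear the ground before the case analysis. First, if some slim vertex of $\mathfrak{t}$ has three or more fat neighbors, then Lemma~\ref{maximalweight} (with $t=3$) combined with the indecomposability of $\mathfrak{t}$ forces $\mathfrak{t}=\mathfrak{h}^{(3)}$, i.e.\ the element of $\mathfrak{F}'$ with one slim vertex and three fat leaves. Second, since $\mathfrak{t}$ is tree-like, two slim vertices adjacent in $\mathfrak{t}$ cannot share a fat neighbor in $\mathfrak{t}$ (such a common neighbor would complete a triangle in the underlying tree); consequently every fat vertex of $\mathfrak{t}$ is attached to at most two mutually non-adjacent slim vertices. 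This already pins down the special graph of $\mathfrak{t}$ well enough to be compared with $S^-(\widetilde{\mathfrak{t}})$.

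The heart of the proof is then a case analysis on the Dynkin type of $S^-(\widetilde{\mathfrak{t}})$. For each type I would first describe all fat, $(-3)$-saturated indecomposable Hoffman graphs realizing that $S^-$-shape with at most two fat neighbors per slim vertex, using Lemma~\ref{extend} to control the $\{0,\pm1\}$ pattern of the reduced representation and in particular to locate pairs of slim vertices with opposite-sign coordinates. Then I would isolate those induced Hoffman subgraphs $\mathfrak{t}\subseteq\widetilde{\mathfrak{t}}$ on the same slim vertex set which are tree-like, and invoke maximality in $\mathfrak{T}$ to select the one carrying the richest fat structure. The $A_m$ case should recover exactly the zig-zag configuration of $\mathfrak{c}_m$: two slim paths $y_1\sim y_3\sim\cdots$ and $y_2\sim y_4\sim\cdots$ glued through a common fat neighbor $f_{1,2}$ of $y_1$ and $y_2$, with pendant fat leaves $f_1,f_m$ forced by maximality, giving $\mathfrak{t}\cong\mathfrak{c}_m\in\mathfrak{C}$. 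The types $D_m$, $\widetilde{A}_{m-1}$ and $\widetilde{D}_{m-1}$ are expected to collapse to the two remaining graphs of $\mathfrak{F}'$, since a branching or cyclic structure in $S^-$ together with tree-likeness and the bound on fat-multiplicity per slim vertex forces $\mathfrak{t}$ to be small.

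The hardest part will be this case-by-case reconstruction: for each $S^-$-type I must distinguish inside $\widetilde{\mathfrak{t}}$ which fat vertices are already in $\mathfrak{t}$ and which were attached during saturation, and show that maximality plus tree-likeness leaves exactly one admissible configuration. The cyclic case $\widetilde{A}_{m-1}$ looks particularly delicate, because a cycle in $S^-$ sits awkwardly with the requirement that the underlying graph of $\mathfrak{t}$ be a tree, so excluding new infinite families here will need the full strength of the combinatorial direct-sum criterion of Lemma~\ref{directsumcombi}, of Lemma~\ref{extend}(ii) bounding $(-)$-distances between opposite-sign coordinates, and of the uniqueness of the integral representation of $\mathfrak{c}_m$ recorded at the end of Section~\ref{construction}.
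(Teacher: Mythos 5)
Your skeleton matches the paper's: pass to a fat, $(-3)$-saturated, integrally representable extension via Lemma~\ref{contained}, use condition (ii) of the theorem to force that extension to be indecomposable, invoke Theorem~\ref{lattice}(i) to restrict its special $(-)$-graph to $A_m$, $D_m$, $\widetilde{A}_{m-1}$ or $\widetilde{D}_{m-1}$, dispose of slim vertices with three fat neighbors by Lemma~\ref{maximalweight}, and then reconstruct $\mathfrak{t}$ case by case. However, the reconstruction is exactly where the content of the theorem lives, and you have only announced it (``I would first describe\dots'', ``is expected to collapse\dots''). In the paper this step occupies a sequence of six claims about the weighted special $(-)$-graph $(S^-(\mathfrak{h}),w)$, which together show that for $m\geq 4$ the types $D_m$, $\widetilde{D}_{m-1}$ and $\widetilde{A}_{m-1}$ are outright impossible (not merely ``small''): branching in $S^-(\mathfrak{h})$ contradicts the weight constraints, and the cyclic type forces adjacencies $x_{i-1}\sim x_{i+1}$ in the slim graph that close a cycle in the tree $T$. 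The small members of $\mathfrak{F}'$ come from a separate finite check at $m\leq 3$ using an external classification, which your plan does not provide for.

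Two concrete gaps in the parts you did argue. First, your bound ``every fat vertex is attached to at most two non-adjacent slim vertices'' is derived from tree-likeness of $\mathfrak{t}$, but the quantity you actually need to control is the number of \emph{common} fat neighbors of two slim vertices in the saturated extension $\widetilde{\mathfrak{t}}$, which is generally not tree-like (saturation may attach a fat vertex to two adjacent slim vertices, creating a triangle). The paper's Claim~\ref{pcommonfatneighbor} establishes $|N^f_{\widetilde{\mathfrak{t}}}(y_1,y_2)|\leq 1$ from the inner-product constraint $(\psi(y_1),\psi(y_2))\in\{0,\pm1\}$ together with the classification of the exceptional configuration, not from tree-likeness. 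Second, in the $A_m$ case maximality in $\mathfrak{T}$ is not enough to isolate $\mathfrak{c}_m$: several weight patterns (e.g.\ $w(x_1)=w(x_m)=1$ at both ends, or all interior weights $1$) produce genuine tree-like, integrally representable candidates that are excluded only because they embed, with the same slim vertex set, into a \emph{decomposable} integrally representable Hoffman graph --- i.e.\ they violate condition (ii), via Proposition~\ref{generaldegree3tree}. Your plan invokes condition (ii) only at the outset to get indecomposability, so as written it would leave these spurious configurations standing.
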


\begin{proof}
Let $\mathfrak{t}$ be a maximal tree-like Hoffman graph in $\mathfrak{T}$. Lemma \ref{contained} shows that there exists a fat, $(-3)$-saturated and integrally representable Hoffman graph $\mathfrak{h}$ of norm $3$, whic is obtained by attaching fat vertices to $\mathfrak{t}$. The assumption of the theorem implies that $\mathfrak{h}$ is indecomposable. Let the graph $T$ be the slim graph of $\mathfrak{h}$ and $|V_s(\mathfrak{h})|=|V_s(\mathfrak{t})|=|V_s(T)|=|\{x_1,\ldots,x_m\}|=m$. Note that $T$ is a tree or forest.

If $m\leq3$, then Koolen et al. \cite{KLY} showed that $\mathfrak{h}$ is one of the Hoffman graphs in Figure \ref{ftype}.

\begin{figure}[H]
\ffigbox{
\begin{subfloatrow}
\ffigbox[1\FBwidth]{\caption*{$\mathfrak{h}_1$}}{\includegraphics[scale=0.8]{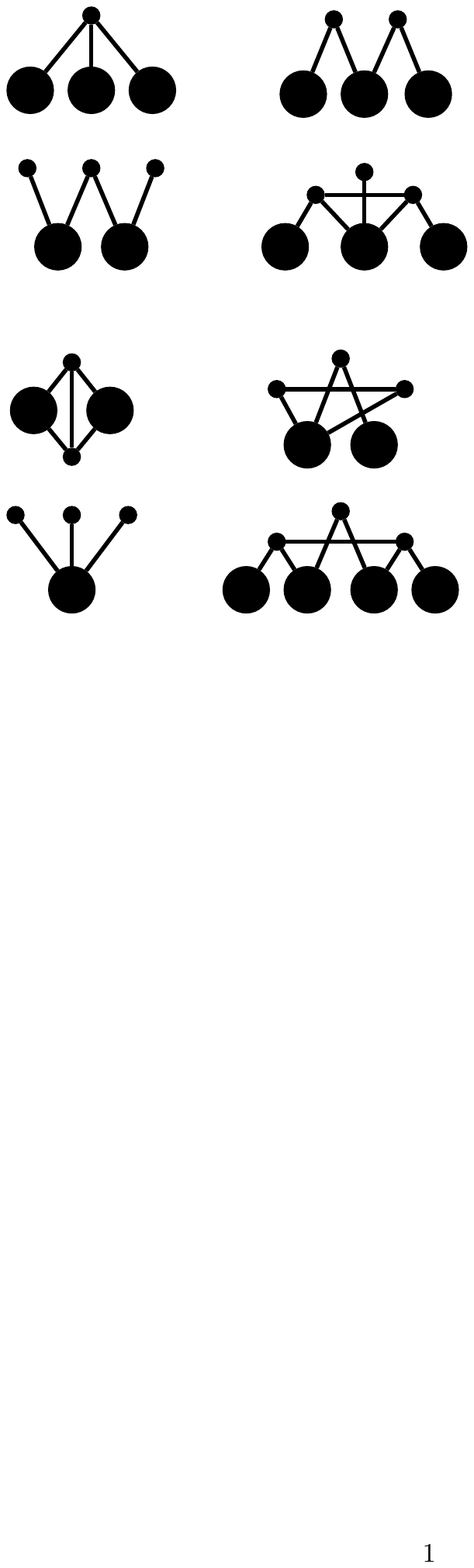}}\hspace{0.6cm}
\ffigbox[1\FBwidth]{\caption*{$\mathfrak{h}_2$}}{\includegraphics[scale=0.8]{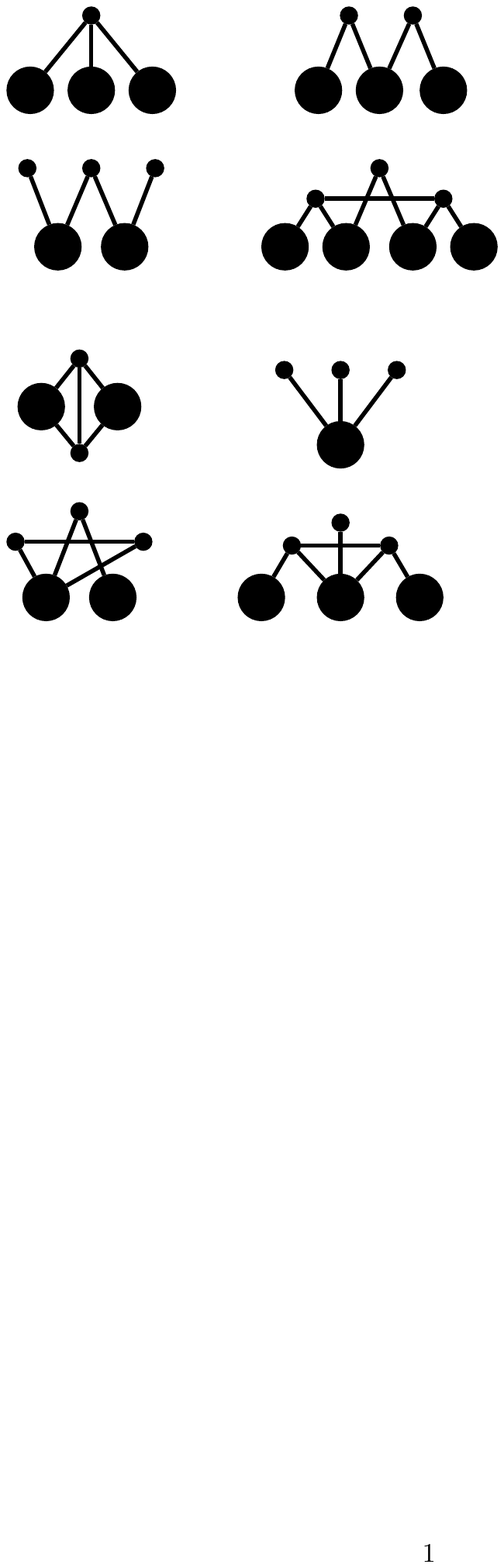}}\hspace{0.6cm}
\ffigbox[1\FBwidth]{\caption*{$\mathfrak{h}_3$}}{\includegraphics[scale=0.8]{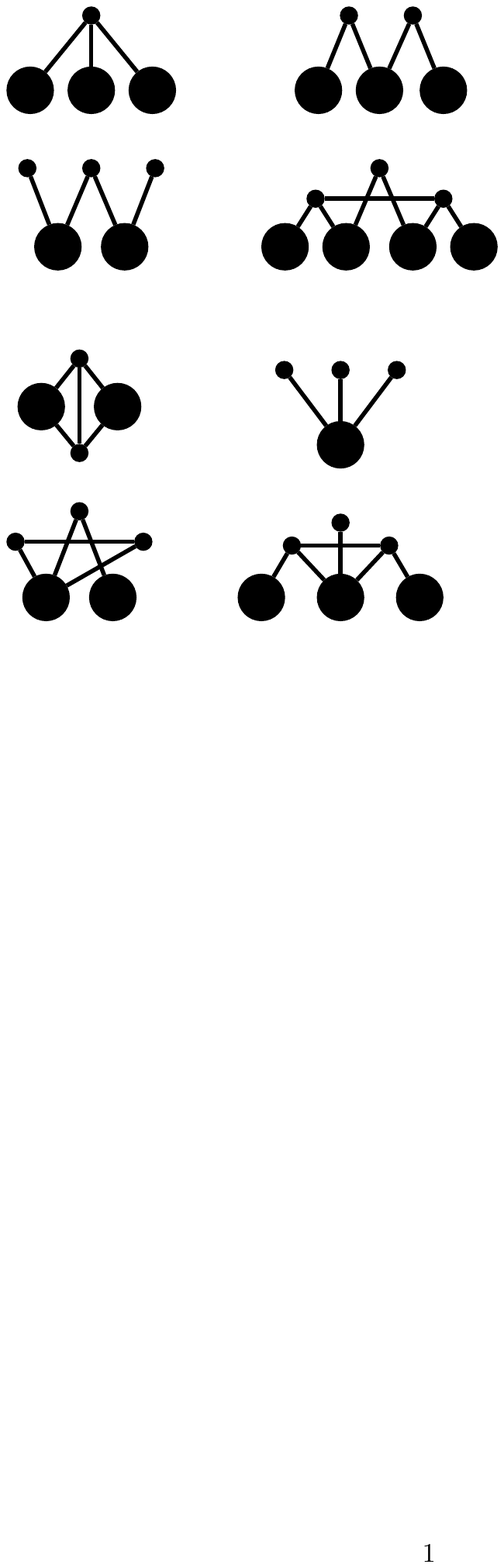}}\hspace{0.6cm}
\ffigbox[1\FBwidth]{\caption*{$\mathfrak{h}_4$}}{\includegraphics[scale=0.8]{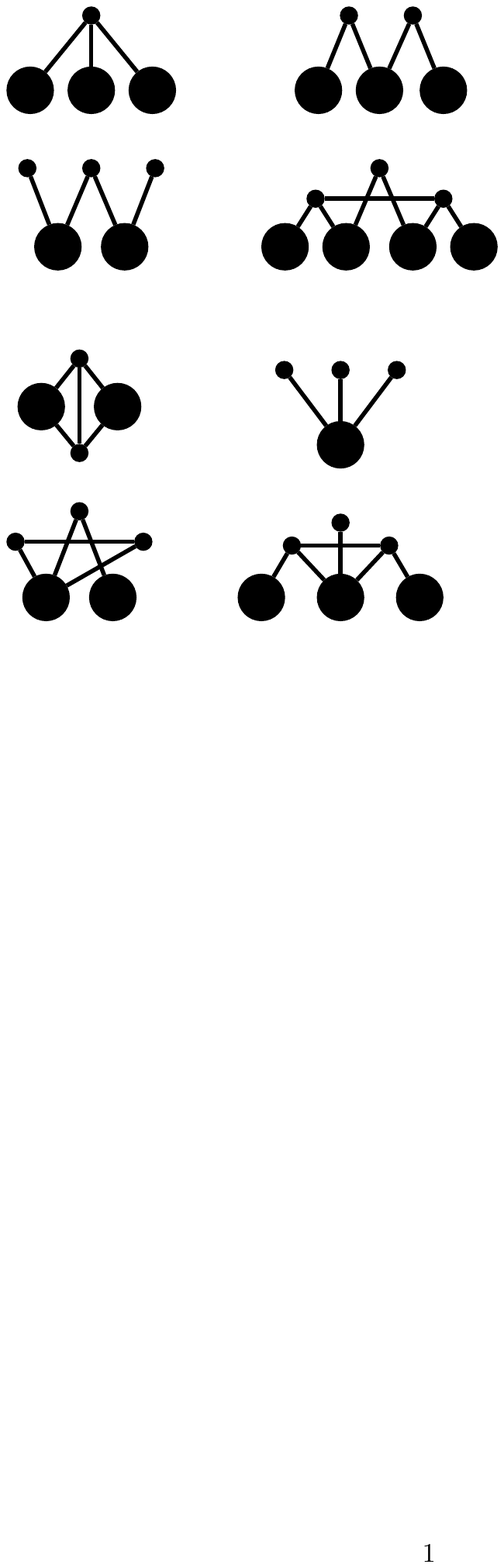}}
\end{subfloatrow}\vspace{0.3cm}

\begin{subfloatrow}
\ffigbox[1\FBwidth]{\caption*{$\mathfrak{h}_5$}}{\includegraphics[scale=0.8]{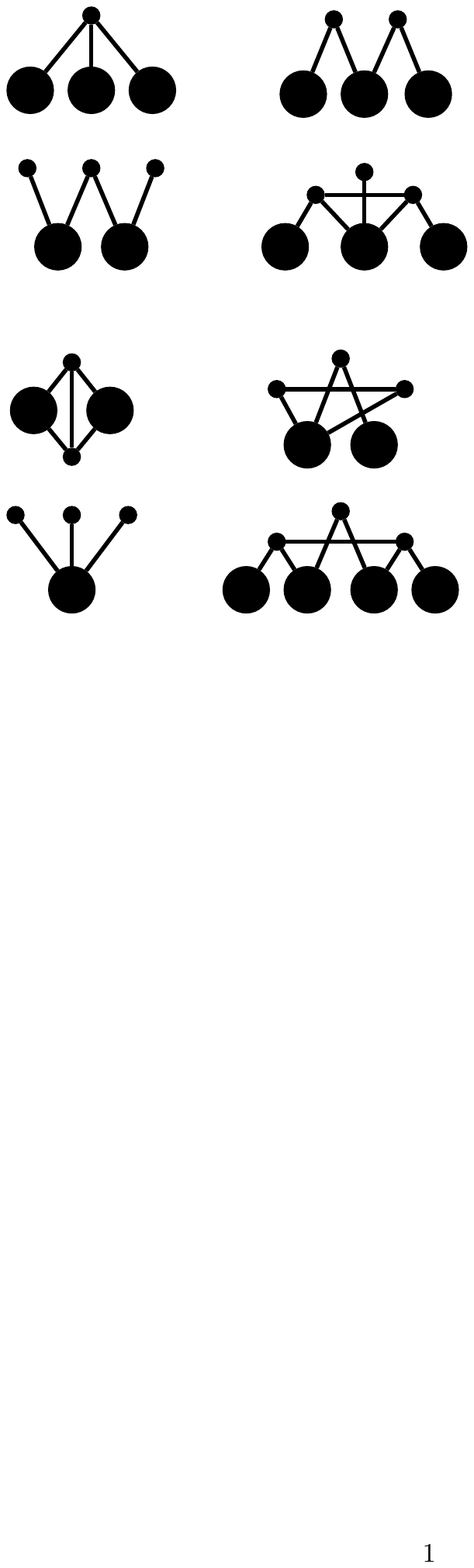}}\hspace{0.6cm}
\ffigbox[1\FBwidth]{\caption*{$\mathfrak{h}_6$}}{\includegraphics[scale=0.8]{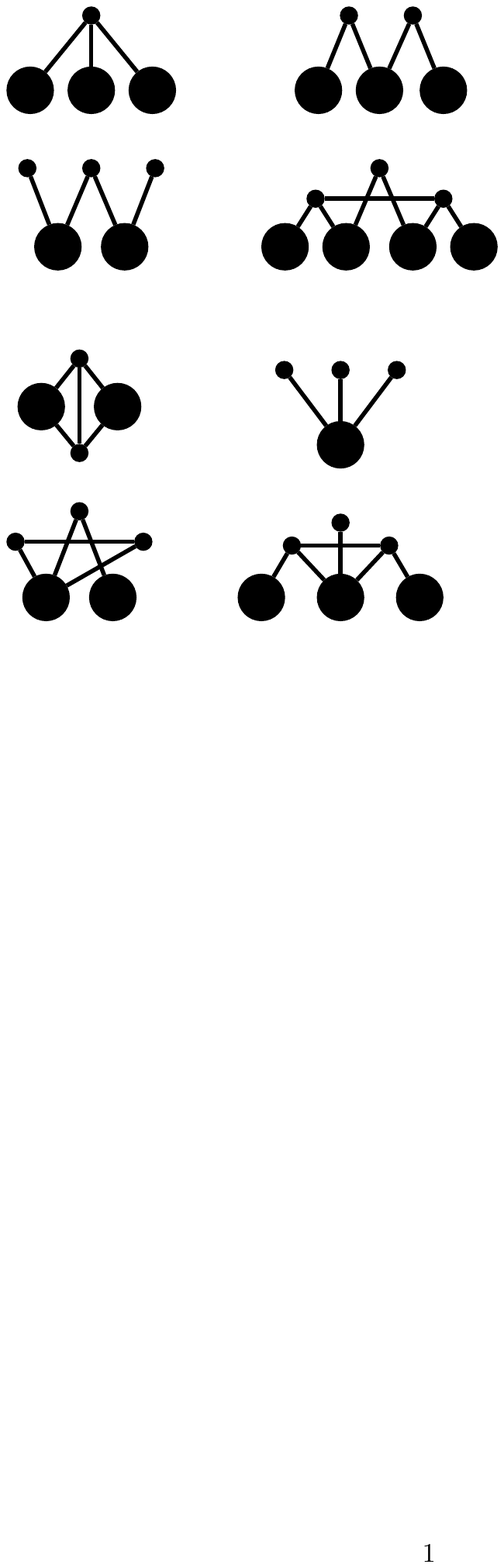}}\hspace{0.6cm}
\ffigbox[1\FBwidth]{\caption*{$\mathfrak{h}_7$}}{\includegraphics[scale=0.8]{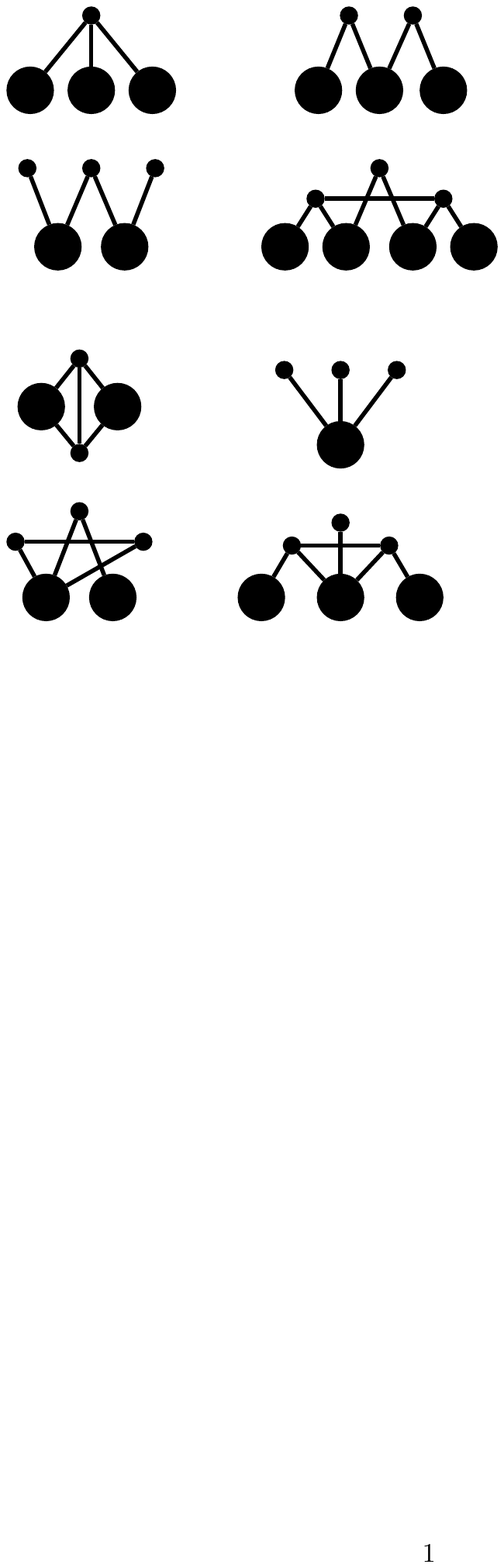}}\hspace{0.6cm}
\ffigbox[1\FBwidth]{\caption*{$\mathfrak{h}_8$}}{\includegraphics[scale=0.8]{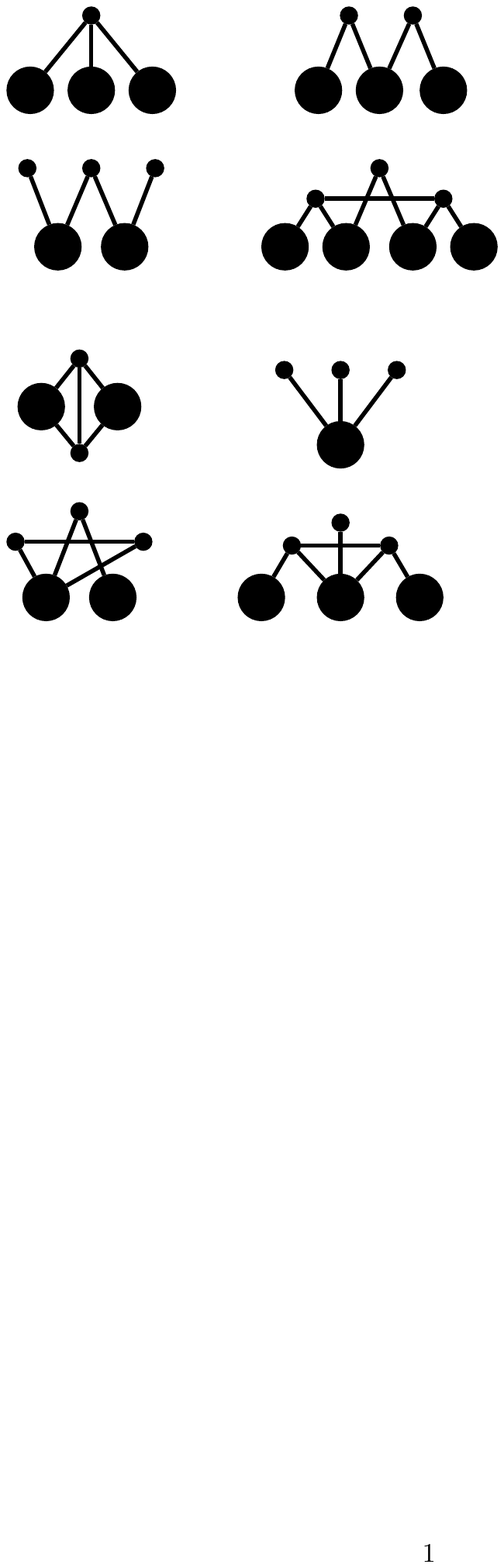}}
\end{subfloatrow}
}
{\caption{}\label{ftype}}
\end{figure}
Since the tree-like Hoffman graph $\mathfrak{t}$ is an induced Hoffman subgraph of $\mathfrak{h}_i$ for some $i\in\{1,\ldots,8\}$, it is easy to check that $\mathfrak{t}$ is exactly isomorphic to one of the Hoffman graphs \raisebox{-0.5ex}{\includegraphics[scale=0.12]{photo1},\includegraphics[scale=0.12]{photo2},\includegraphics[scale=0.12]{photo3},
\includegraphics[scale=0.12]{photo4},\includegraphics[scale=0.12]{photo5}} and the theorem holds.

Now we consider the case $m\geq4$. We will proceed with a sequence of claims, which we will use later in the proof. Denote by $S^-(\mathfrak{h})$ (resp. $(S^-(\mathfrak{h}),w)$) the special $(-)$-graph (resp. weighted special $(-)$-graph) of $\mathfrak{h}$. Denote by $\phi$ (resp. $\psi$) an integral representation (resp. reduced representation) of $\mathfrak{h}$ of norm $3$. If there exists a vertex $x_i$ such that $w(x_i)=3$ in $(S^-(\mathfrak{h}),w)$, then $\mathfrak{h}$ is isomorphic to the Hoffman graph \raisebox{-0.3ex}{\includegraphics[scale=0.12]{photo1}} by Lemma \ref{maximalweight} with $m=1$. Hence we have $w(x_i)\in\{1,2\}$ for any vertex $x_i$ of $S^-(\mathfrak{h})$.

\begin{claim}\label{pcommonfatneighbor}
Suppose that $y_1$ and $y_2$ are two distinct slim vertices of $\mathfrak{h}$, then $|N^f_{\mathfrak{h}}(y_1,y_2)|\leq1$.
\end{claim}
\begin{proof}[(Proof of Claim \ref{pcommonfatneighbor})] If $|N^f_{\mathfrak{h}}(y_1,y_2)|\geq2$, then $(\psi(y_1),\psi(y_2))=-1$ and $y_1\sim y_2$ in $\mathfrak{h}$ for $(\psi(y_1),\psi(y_2))\in\{0,\pm1\}$. From \cite{KLY}, we find that $\mathfrak{h}$ is exactly the Hoffman graph \raisebox{-0.8ex}{\includegraphics[scale=0.8]{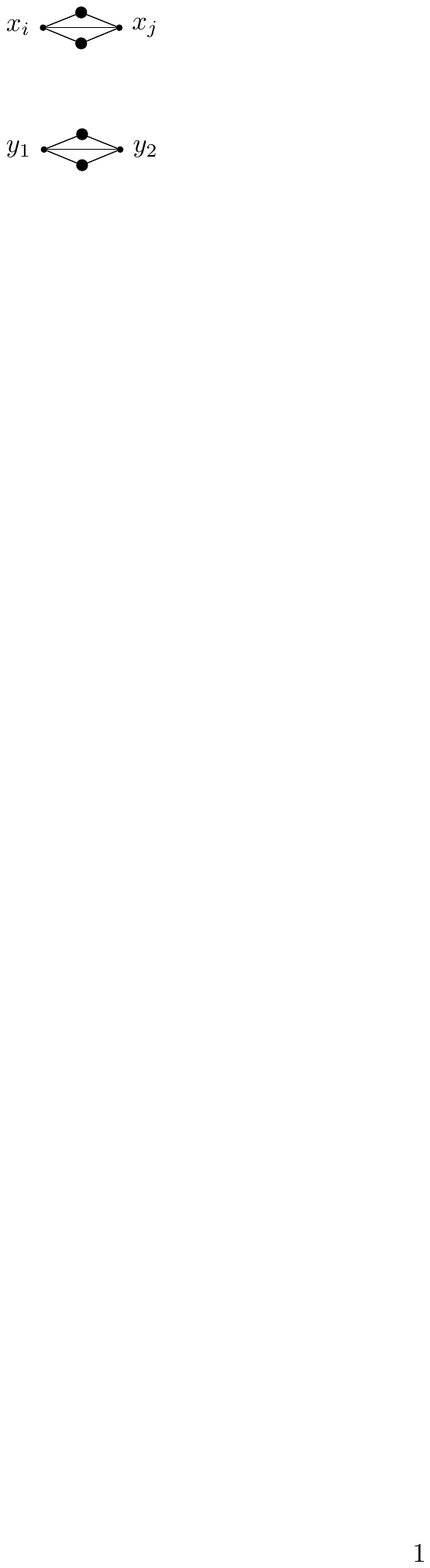}} and this contradicts the condition $m\geq4$.
\end{proof}

\begin{claim}\label{p22connected}
Suppose that the path $P=y_1y_2$ is an induced subgraph of $S^-(\mathfrak{h})$ with $w(y_1)=w(y_2)=2$. Then one of $y_1$ and $y_2$ is a leaf of $S^-(\mathfrak{h})$.
Moreover, if there exists a vertex $y_3$ adjacent to $y_2$ in $S^-(\mathfrak{h})$, then $y_3\sim y_1$ in $\mathfrak{h}$.
\end{claim}
\begin{proof}[(Proof of Claim \ref{p22connected})] By \cite{KLY}, if neither $y_1$ nor $y_2$ is a leaf of $S^-(\mathfrak{h})$, then $\mathfrak{h}$ is one of the Hoffman graphs \raisebox{-0.3ex}{\includegraphics[scale=0.12]{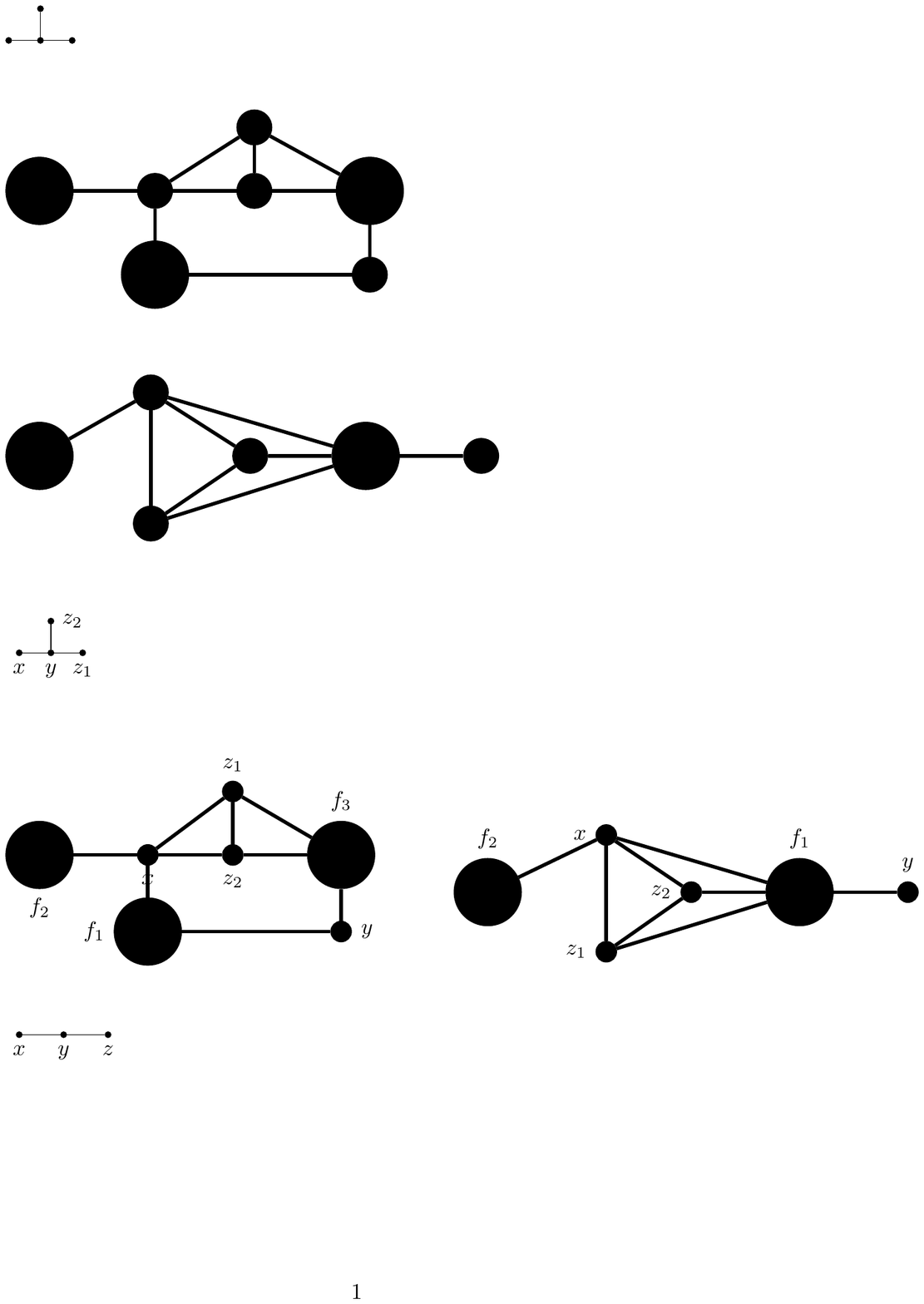}}, \raisebox{-0.3ex}{\includegraphics[scale=0.13]{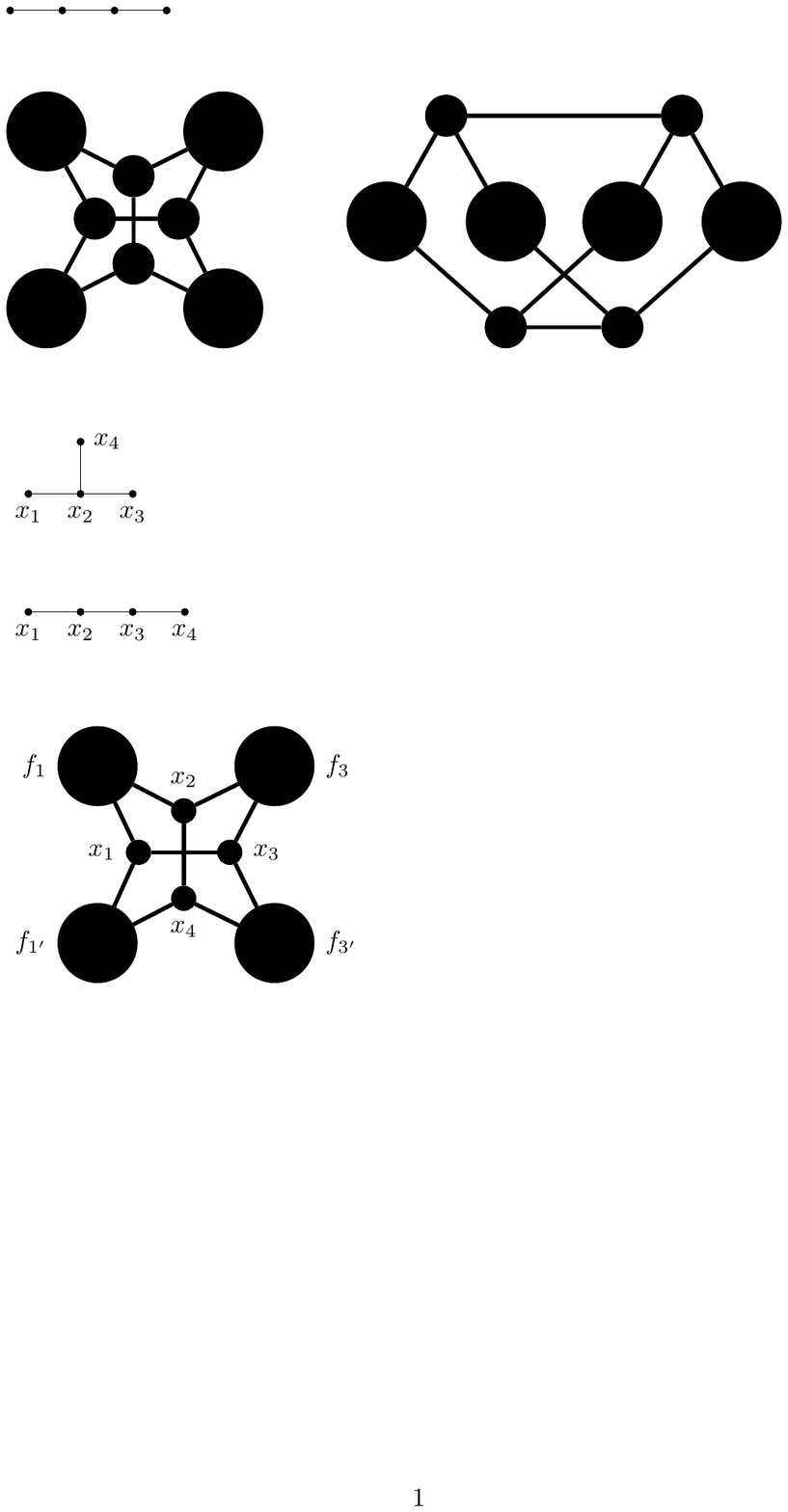}} and \raisebox{-0.3ex}{\includegraphics[scale=0.13]{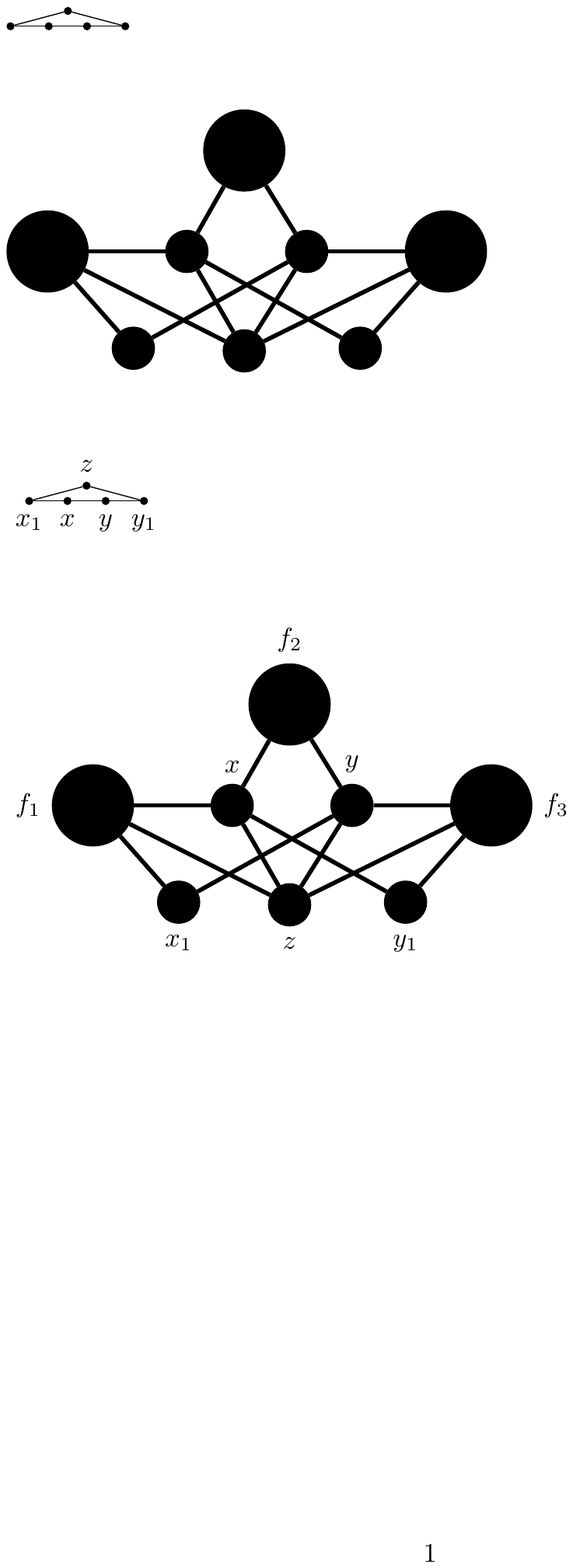}}. As the slim graph $T$ of $\mathfrak{h}$ has no cycle, $\mathfrak{h}$ is not the Hoffman graph \raisebox{-0.3ex}{\includegraphics[scale=0.13]{3leaf_1_hn}}. If $\mathfrak{h}$ is the Hoffman graph \raisebox{-0.3ex}{\includegraphics[scale=0.13]{p22connected_4_}}, then the tree-like Hoffman graph $\mathfrak{t}$ is exactly the Hoffman graph \raisebox{0.15ex}{\includegraphics[scale=0.13]{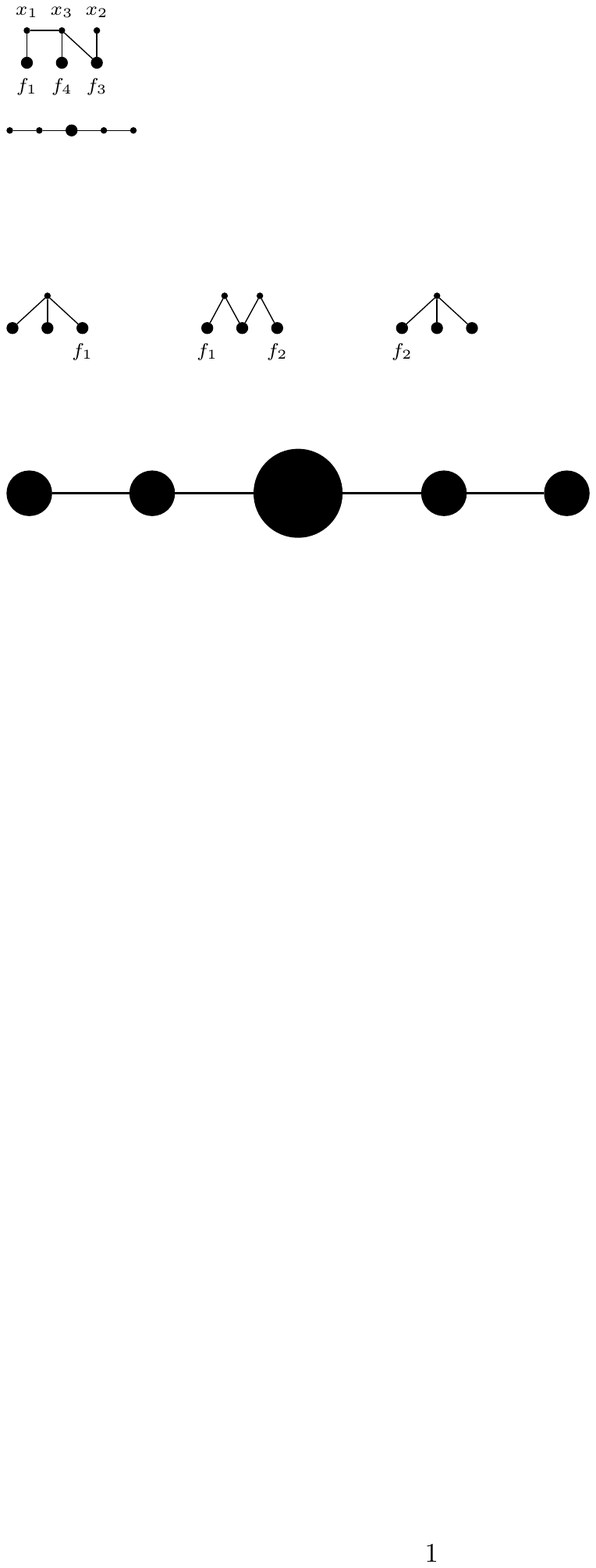}}. It is easy to see that $\mathfrak{t}$ is an induced subgraph of $\mathfrak{g}=$ \raisebox{-2.5ex}{\includegraphics[scale=0.8]{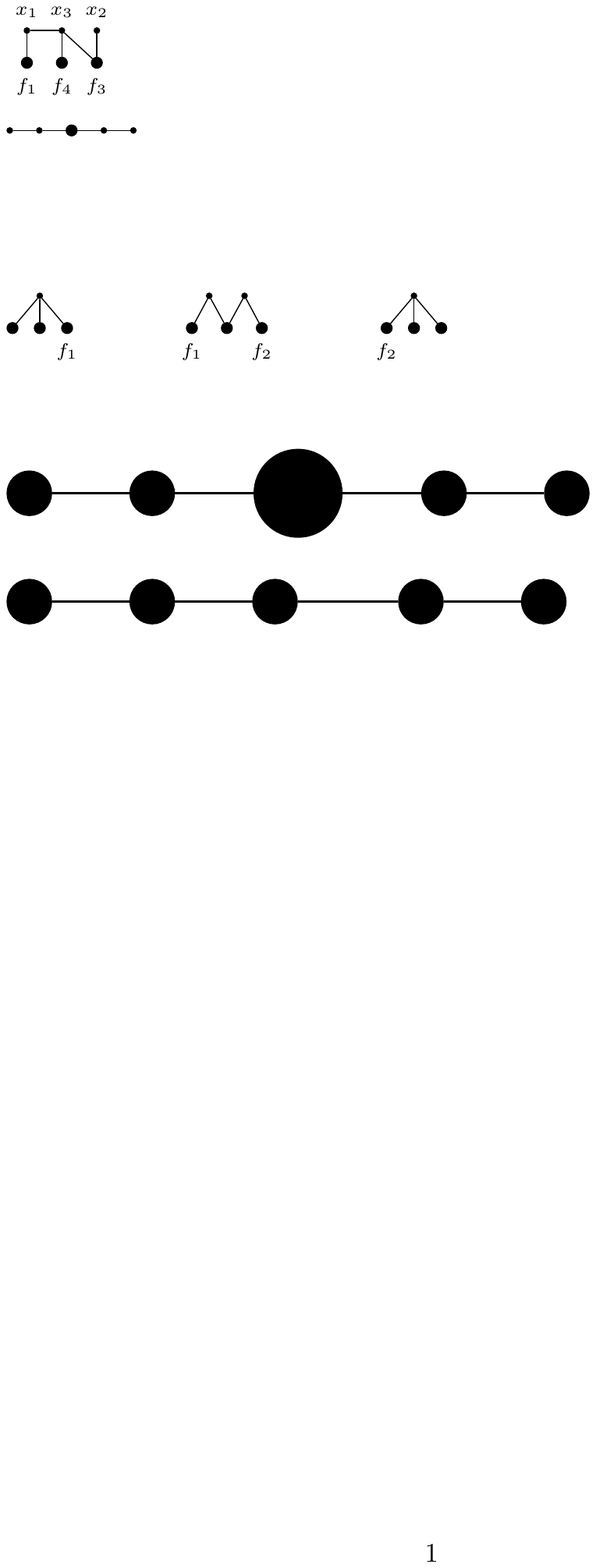}}$\oplus$\raisebox{-2.5ex}{\includegraphics[scale=0.8]{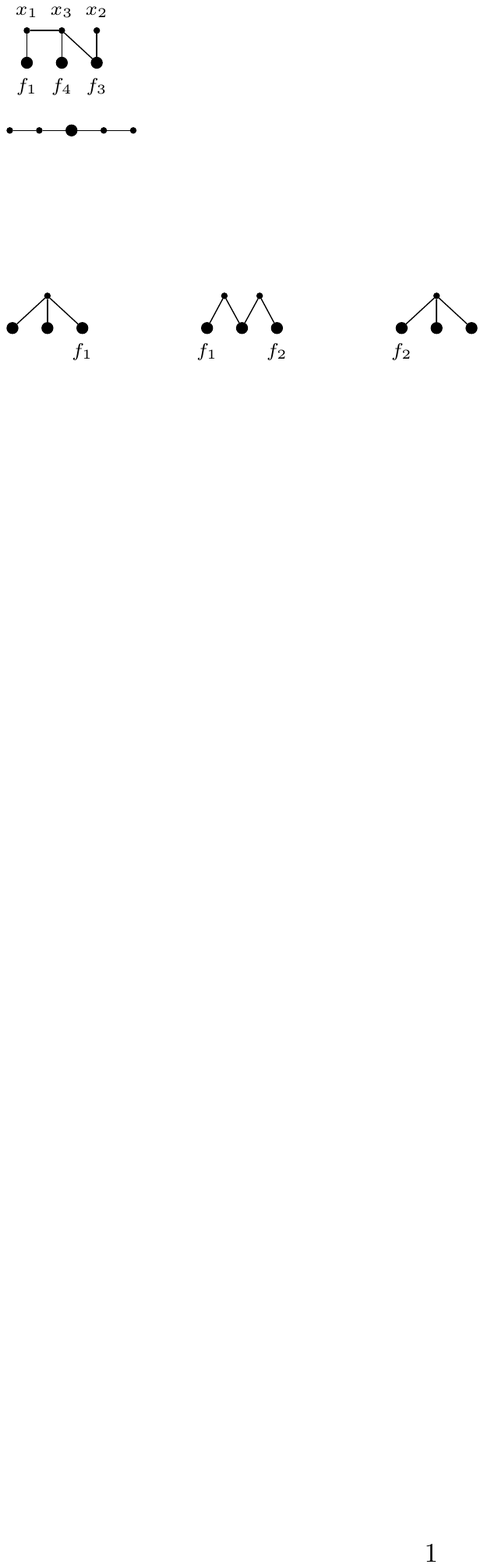}}$\oplus$\raisebox{-2.5ex}{\includegraphics[scale=0.8]{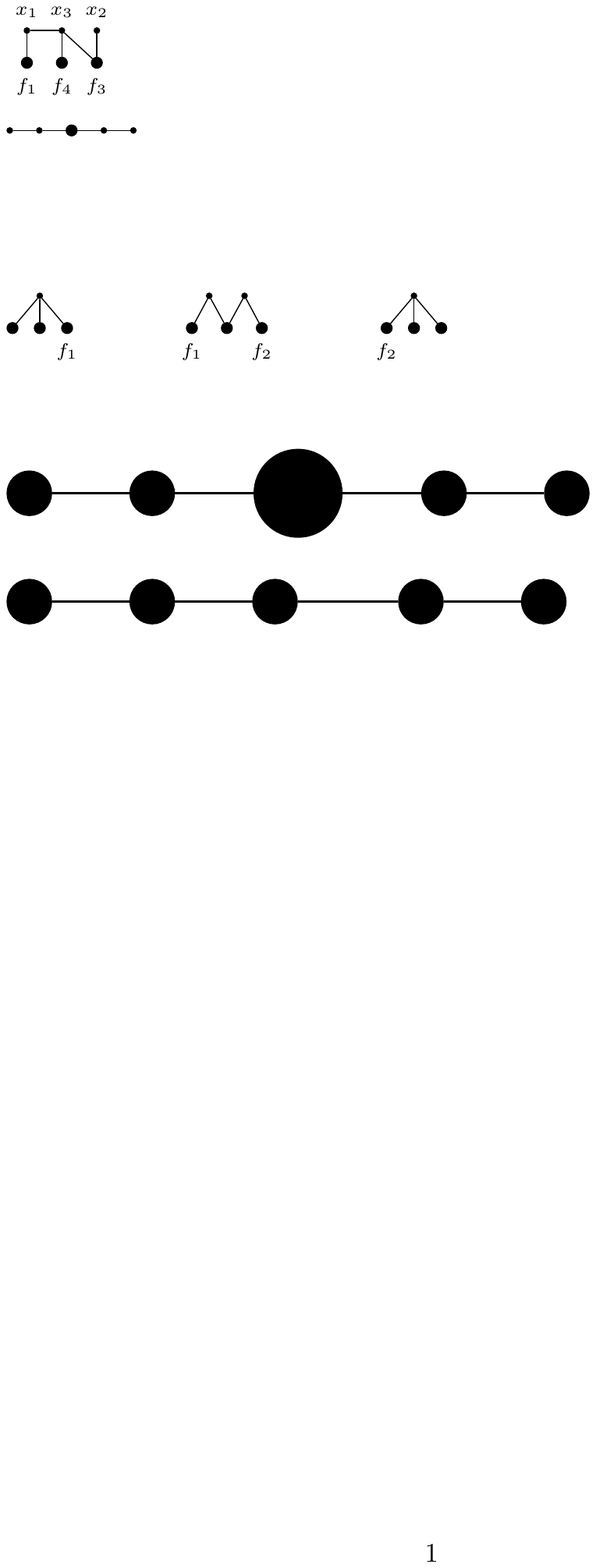}} and $\mathfrak{g}$ is integrally representable of norm $3$ and decomposable. This contradicts the assumption of the theorem. If $\mathfrak{h}$ is the Hoffman graph \raisebox{-0.3ex}{\includegraphics[scale=0.12]{p22connected_5_}}, then $\mathfrak{t}$ is exactly the Hoffman graph \raisebox{0.3ex}{\includegraphics[scale=0.13]{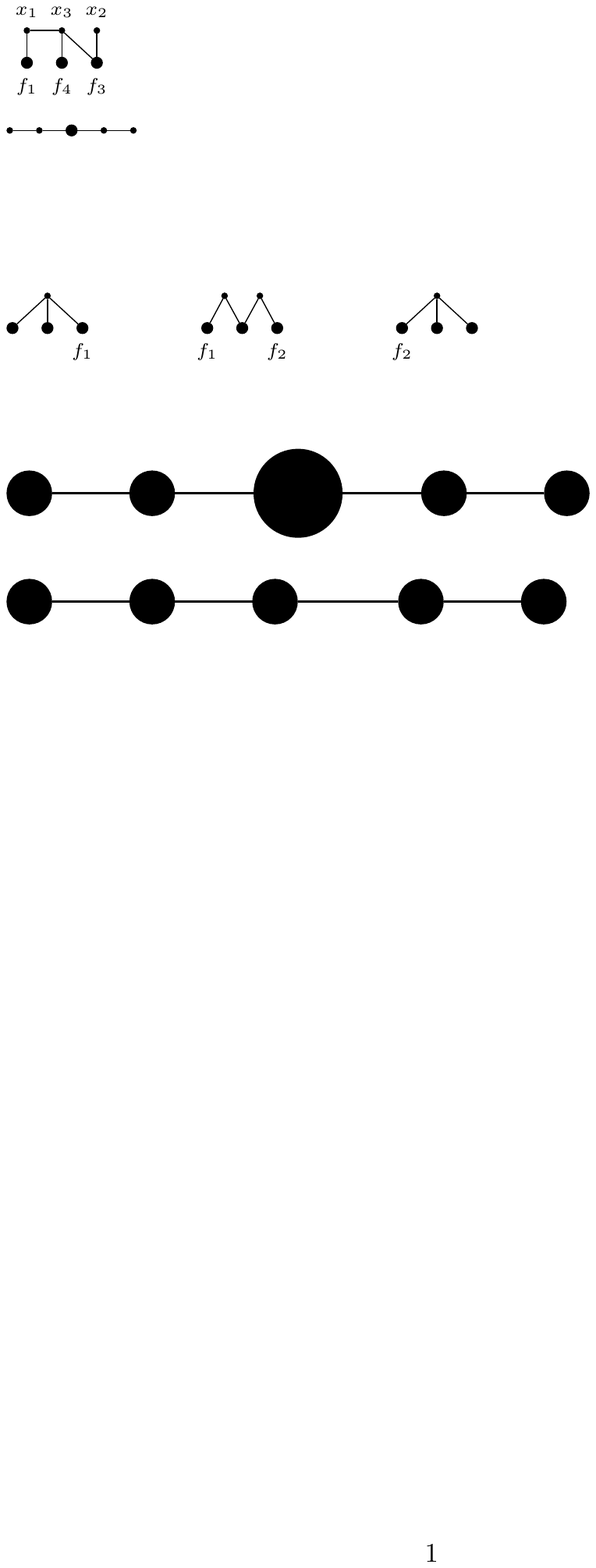}} and this gives a contradiction to the assumption of the theorem again by Proposition \ref{generaldegree3tree}. Without loss of generality, we may define $\psi(y_1)={\bf e}_1$ and $\psi(y_2)=-{\bf e}_1$, it is easy to see that if $y_3$ is adjacent to $y_2$ in $S^-(\mathfrak{h})$, then $(\psi(y_3),{\bf e_1})=1$ and $y_3\sim y_1$ in $\mathfrak{h}$.
\end{proof}

\begin{claim}\label{p22ends}
Suppose that the path $P=y_1y_2\cdots y_p$ is an induced subgraph of $S^-(\mathfrak{h})$ with $w(y_i)=1$ for $i=2,3,\ldots,p-1$. Then the graph $\bar{P}$, the complement of $P$, is an induced subgraph of $T$. Hence $\bar{P}$ has no cycle and $p\leq4$. In particular, if $p=3$ or $4$, then $y_1\sim y_p$ in $\mathfrak{h}$.
\end{claim}
\begin{proof}[(Proof of Claim \ref{p22ends})] This follows by the fact that the unique fat neighbor of $y_2$ is the fat neighbor of all the other $y_i$'s.
\end{proof}

\begin{claim}\label{p11endsfatneigbhor}
Suppose that the path $P=y_1y_2y_3$ is an induced subgraph of $S^-(\mathfrak{h})$ with $w(y_1)=w(y_3)=1$ and $w(y_2)=2$. Then $N_{\mathfrak{h}}^f(y_1,y_3)=\emptyset.$
\end{claim}
\begin{proof}[(Proof of Claim \ref{p11endsfatneigbhor})] Otherwise, $|N_{\mathfrak{h}}^f(y_{1},y_{3})|=1$ and $(\psi(y_{1}),\psi(y_{3}))\leq0$. As $y_{1}\not\sim y_{3}$ in $S^-(\mathfrak{h})$, we have $(\psi(y_{1}),\psi(y_{3}))=0$ and $y_1\sim y_3$ in $\mathfrak{h}$. Now we may define $\psi(y_{1})={\bf e}_1+{\bf e}_2,~\psi(y_{2})=-{\bf e}_2,~\psi(y_{3})=-{\bf e}_1+{\bf e}_2,$ and $\phi(y_{1})={\bf e}_1+{\bf e}_2+{\bf e}_{f_1},~\phi(y_{2})=-{\bf e}_2+{\bf e}_{f_1}+{\bf e}_{f_2},~\phi(y_{3})=-{\bf e}_1+{\bf e}_2+{\bf e}_{f_1}.$
If there exists a vertex $y_i$ adjacent to $y_{1}$ or $y_{3}$ in $S^-(\mathfrak{h})$ where $i\neq 2$, then $N_{\mathfrak{h}}^f(y_i,y_{j})=\{f_1\}$ and $(\psi(y_i),\psi(y_{j}))\leq0$ for $j=1,2,3$. This is not possible. Hence there exists a vertex $y_k$ adjacent to $y_2$ in $S^-(\mathfrak{h})$ as $S^-(\mathfrak{h})$ is connected by Theorem \ref{lattice} (i) and $|V_s(\mathfrak{h})|=m\geq4$. But we have $y_k\not\sim y_{1}$ and $y_k\not\sim y_{3}$ in $S^-(\mathfrak{h})$, since $S^-(\mathfrak{h})$ is isomorphic to $A_m,~\widetilde{A}_{m-1},~D_m$ or $\widetilde{D}_{m-1}$. Hence $(\psi(y_k),\psi(y_2))<0$, $(\psi(y_k),\psi(y_1))\geq0$ and $(\psi(y_k),\psi(y_{3}))\geq0$. It is easy to check that $(\psi(y_k),\psi(y_{1}))=0$ or $(\psi(y_k),\psi(y_{3}))=0$ is not possible. Hence $(\psi(y_k),\psi(y_{1}))>0$ and $(\psi(y_k),\psi(y_{3}))>0$ and this implies that $y_k\sim y_{1}$ and $y_k\sim y_{3}$ in $\mathfrak{h}$ and $y_{1}y_{3}y_k$ is a cycle of $T$. This gives a contradiction.
\end{proof}

\begin{claim}\label{p11ends}
Suppose the path $P=y_{1}y_{2}y_{3}$ is an induced subgraph of $S^-(\mathfrak{h})$ with $w(y_{1})=w(y_{3})=1$ and $w(y_{2})=2$. Then one of the following holds:
\begin{itemize}
\item[$(a)$] $y_{1}\sim y_{3}$ in $\mathfrak{h}$;
\item[$(b)$] $y_{1}$ or $y_{3}$ is a leaf of $S^-(\mathfrak{h})$.
\end{itemize}
\end{claim}
\begin{proof}[(Proof of Claim \ref{p11endsfatneigbhor})] Suppose this is not the case. Then $y_{1}\not\sim y_{3}$ in $\mathfrak{h}$. We also have $y_{1}\not\sim y_{3}$ in $S^-(\mathfrak{h})$. So $(\psi(y_1),\psi(y_3))=0$ and we may define
$\psi(y_{1})={\bf e}_1+{\bf e}_2,~\psi(y_{2})=-{\bf e}_2,~\psi(y_{3})=-{\bf e}_1+{\bf e}_2.$ From Claim \ref{p11endsfatneigbhor}, we find $N_{\mathfrak{h}}^f(y_{1},y_{3})=\emptyset.$ Hence without loss of generality, define $\phi(y_{1})={\bf e}_1+{\bf e}_2+{\bf e}_{f_1},~\phi(y_{2})=-{\bf e}_2+{\bf e}_{f_1}+{\bf e}_{f_2},~\phi(y_{3})=-{\bf e}_1+{\bf e}_2+{\bf e}_{f_2}.$
As neither $y_{1}$ nor $y_{3}$ is a leaf of $S^-(\mathfrak{h})$, there exists a vertex $y_i'\sim y_{i}$ in $S^-(\mathfrak{h})$ for $i=1,3$. It follows that $(\phi(y_1'),-{\bf e}_1+{\bf e}_{f_1})=2$ and $(\phi(y_3'),{\bf e}_1+{\bf e}_{f_2})=2$. From Lemma \ref{extend} \rm{(ii)}, the vertices $y_1'$ and $y_{3}'$ have distance at most $2$ in $S^-(\mathfrak{h})$. If there exists a vertex $y_4$ adjacent to $y_1'$ and $y_{3}'$ in $S^-(\mathfrak{h})$, then $w(y_4)=2$ as otherwise $y_1'\sim y_3'$ by Claim \ref{p22ends} and $y_1'y_2y_3'$ is a cycle in $T$. But, if $w(y_4)=2$, we can not find $\psi(y_4)$ satisfying $(\psi(y_4),\psi(y_1'))=-1$ and $(\psi(y_4),\psi(y_3'))=-1$, as $(\psi(y_1'),{\bf e}_1)=-1$, $(\psi(y_3'),{\bf e}_1)=1$ and $(\psi(y_1'),\psi(y_3'))\geq0$. Hence $y_1'\sim y_3'$ in $S^-(\mathfrak{h})$ and $V_s(\mathfrak{h})=\{y_{1},y_{2},y_{3},y_1',y_3'\}$. If $w(y_1')\neq2$ in $(S^-(\mathfrak{h}),w)$, then $\psi(y_1')=-{\bf e}_1\pm {\bf e}_q$ for some $q\geq3$ and $(\psi(y_3'),{\bf e}_q)=0$. Now we can not find $y\in V_s(\mathfrak{h})$ such that $\psi(y)_q=-\psi(y_1')_q$. This contracts Lemma \ref{extend} \rm{(i)}. So $w(y_1')=2$. Similarly, $w(y_3')=2$. Then by Claim \ref{p22connected}, we obtain a contradiction.
\end{proof}

\begin{claim}\label{dm}
The graph \raisebox{-0.6ex}{\includegraphics[scale=0.8]{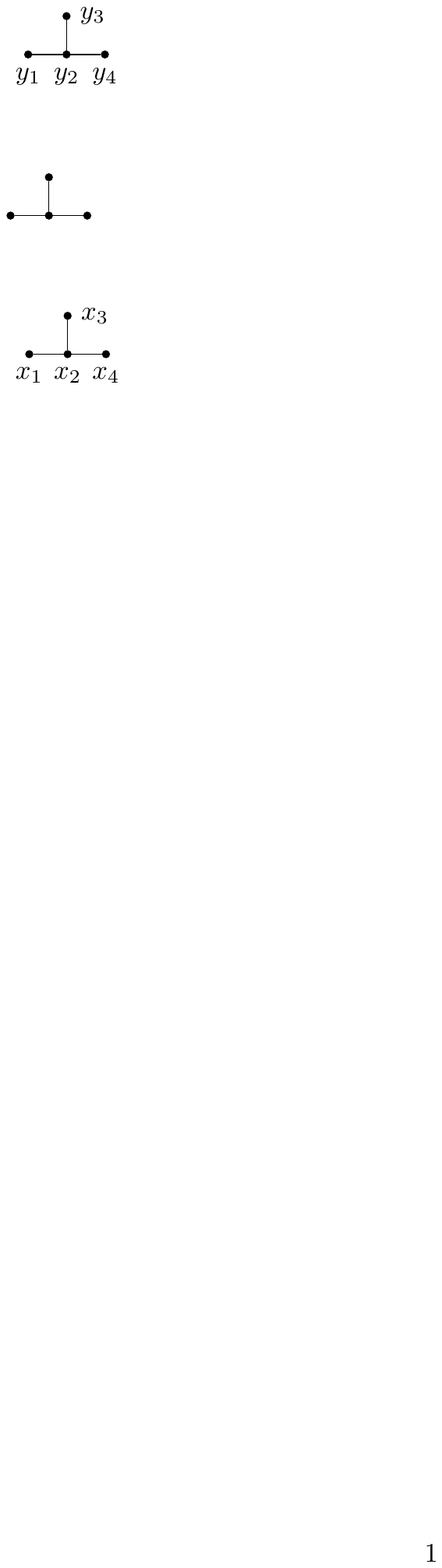}} can not be an induced subgraph of $S^-(\mathfrak{h})$.
\end{claim}
\begin{proof}[(Proof of Claim \ref{dm})] Assume \raisebox{-2.0ex}{\includegraphics[scale=0.8]{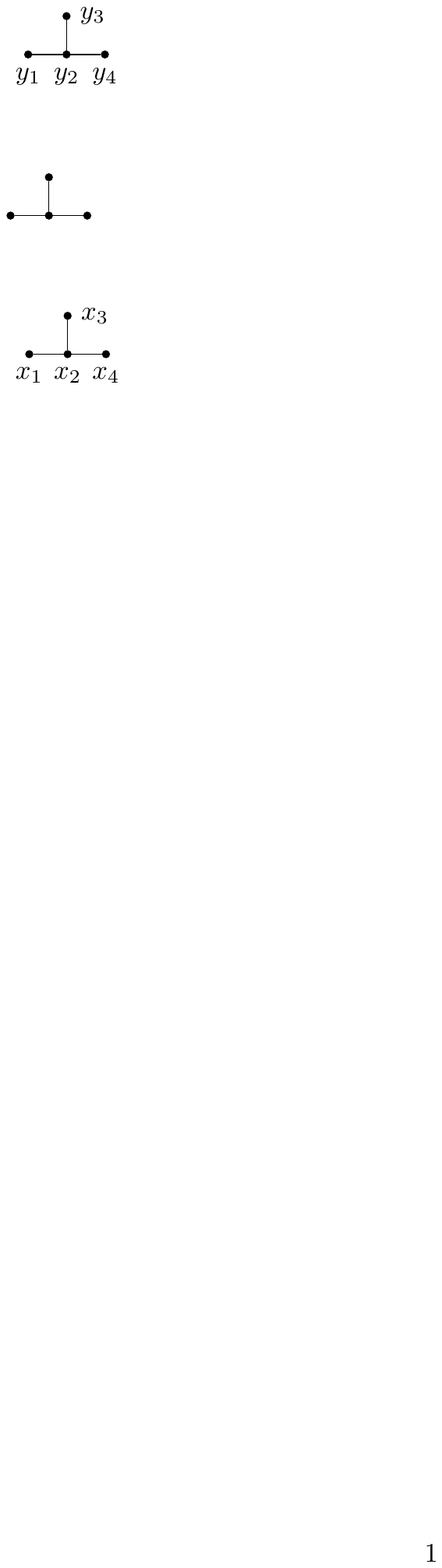}} is an induced subgraph of $S^-(\mathfrak{h})$ with $y_1$ and $y_3$ as leaves. From \cite{KLY}, we obtain that all leaves adjacent to $y_2$ in $S^-(\mathfrak{h})$ have weight $1$, otherwise $\mathfrak{h}$ is one of the Hoffman graphs \raisebox{-0.15ex}{\includegraphics[scale=0.12]{3leaf_1_hn}} and \raisebox{-0.15ex}{\includegraphics[scale=0.12]{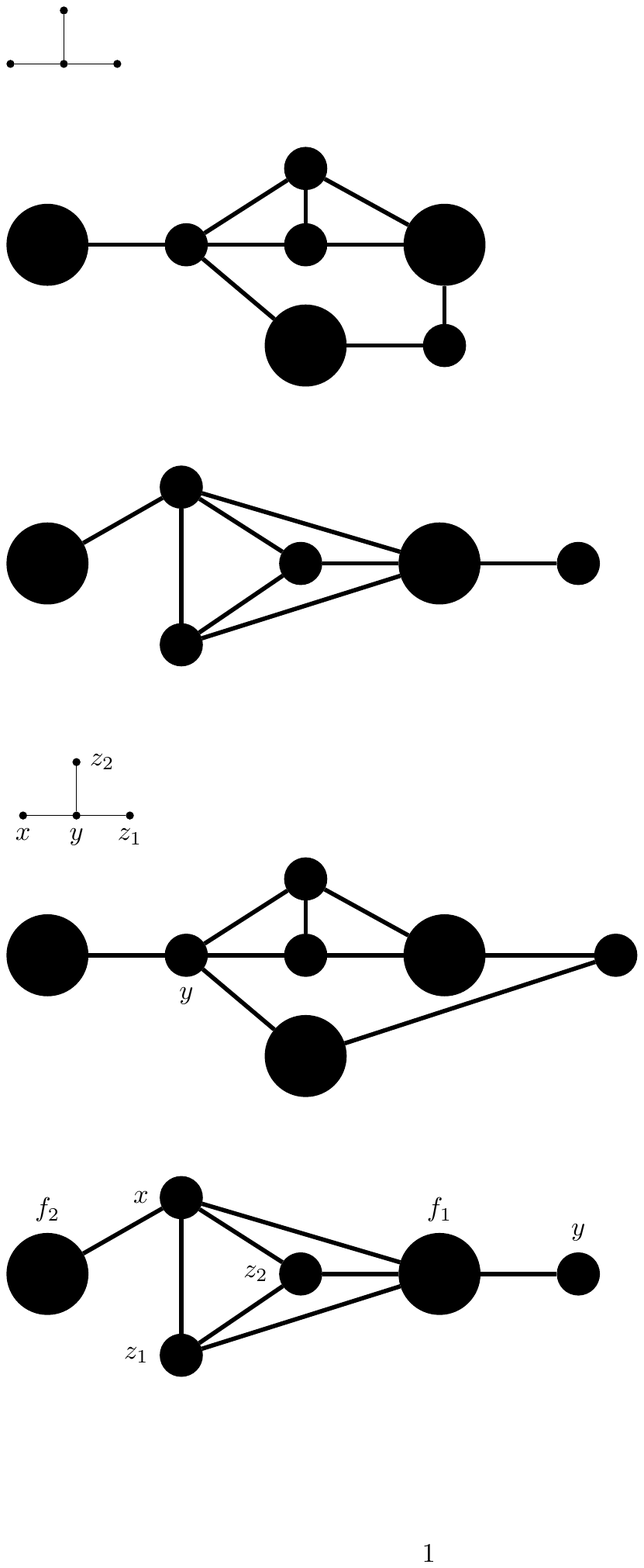}} and this is not possible as their slim graphs have a cycle. If $w(y_2)=1$, then $y_1y_3y_4$ is a cycle in $T$ by Claim \ref{p22ends}. Hence $w(y_2)=2$. If $w(y_4)=2$, then $y_4$ is a leaf in $\mathfrak{h}$ by Claim \ref{p22connected} and this is also not possible. So $w(y_4)=1$. Then for $y_1$, $y_2$ and $y_4$, there must be $2$ vertices share one common fat neighbor as $w(y_2)=2$ and it contradicts Claim \ref{p11endsfatneigbhor}.
\end{proof}

Claim \ref{dm} and Theorem \ref{lattice} imply that the special $(-)$-graph $S^-(\mathfrak{h})$ is isomorphic to either $A_m$ or $\widetilde{A}_{m-1}$. Let us first consider that $S^-(\mathfrak{h})\cong\widetilde{A}_{m-1}$, that is, $S^-(\mathfrak{h})$ is the cycle $x_1x_2\cdots x_mx_1$. For the path $P=x_{i-1}x_{i}x_{i+1}$ (where $i\mod m$) of $S^-(\mathfrak{h})$, we have $(w(x_{i-1}),w(x_i),w(x_{i+1}))\in\{(1,1,1),(1,1,2),(1,2,1),(2,1,1),(2,1,2)\}.$ By using Claim \ref{p22ends} and Claim \ref{p11ends}, we have $x_{i-1}\sim x_{i+1}$ in $\mathfrak{h}$ for all $i\in\{1,2,\ldots,m\}$. If $m$ is even, then $x_1x_3\cdots x_{m-1}$ is a cycle of $T$; if $m$ is odd, then $x_1x_3\cdots x_mx_2x_4\cdots x_{m-1}$ is a cycle of $T$. This shows that $S^-(\mathfrak{h})\not\cong\widetilde{A}_{m-1}$.

Hence $S^-(\mathfrak{h})\cong A_m$ and assume $S^-(\mathfrak{h})=x_1x_2\cdots x_m$ with $m\geq4$. By using Claim \ref{p22ends} and Claim \ref{p11ends} again, we find that there exists two paths $P_1$ and $P_2$ in the graph $T$, where
$$P_1=x_3x_5\cdots x_{2\lfloor\frac{m}{2}\rfloor-1},~P_2=x_2x_4\cdots x_{2\lfloor\frac{m-1}{2}\rfloor}.$$
As $T$ contains no cycle, both paths $P_1$ and $P_2$ are exactly the induced subgraphs of $T$. Moreover, there exists at most one edge between $P_1$ and $P_2$ in the graph $T$. Note that if there is a weighted path $(P,w|_P)$ in $(S^-(\mathfrak{h}),w)$, where $P=x_{i+1}x_{i+2}x_{i+3}x_{i+4}$ with $w(x_{i+1})=w(x_{i+4})=2$ and $w(x_{i+2})=w(x_{i+3})=1$, then \raisebox{-0.5ex}{\includegraphics[scale=0.8]{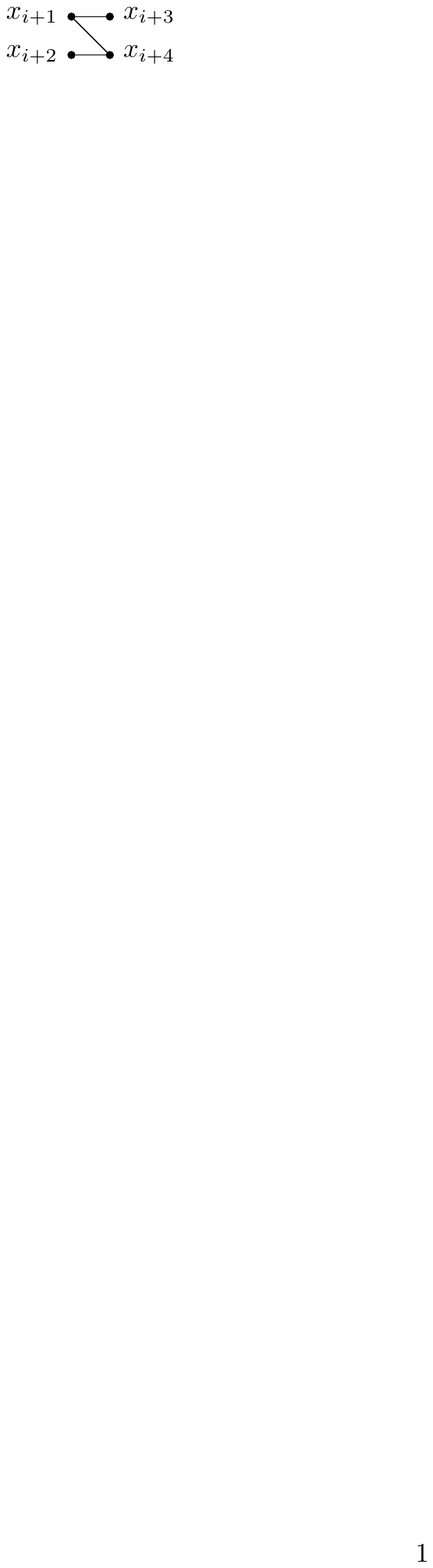}} is an induced subgraph of $T$ by Claim \ref{p22ends}. This implies that there is at most one weighted path isomorphic to $(P,w|_P)$ in $(S^-(\mathfrak{h}),w)$.

Now we focus on the possibilities for the tree-like Hoffman graph $\mathfrak{t}$. Note that for help of understanding, the numbers in the Hoffman graphs in Figure \ref{fproofcase1} and Figure \ref{fproofcase2} denote the number of its fat neighbors in the Hoffman graph $\mathfrak{h}$.

\begin{enumerate}
\item $w(x_1)=1$ or $w(x_m)=1$ in $(S^-(\mathfrak{h}),w)$.  Suppose $w(x_1)=1$, by \cite{KLY}, we find $w(x_2)=2$ and $w(x_3)=1$. Without loss of generality, we may define
$\psi(x_1)={\bf e}_1+{\bf e}_2,\psi(x_2)=-{\bf e}_2, \psi(x_3)=-{\bf e}_1+{\bf e}_2$ by Lemma \ref{extend} and $\phi(x_1)={\bf e}_1+{\bf e}_2+{\bf e}_{f_1}, \phi(x_2)=-{\bf e}_2+{\bf e}_{f_1}+{\bf e}_{f_2}, \phi(x_3)=-{\bf e}_1+{\bf e}_2+{\bf e}_{f_2}$ by Claim \ref{p11endsfatneigbhor}.

    If $w(x_4)=2$, then $\psi(x_4)={\bf e}_1$ and  $\phi(x_4)={\bf e}_1+{\bf e}_{f_2}+{\bf e}_{f_i}$ for some $i>3$. Now it is easy to check that there is no another vertex adjacent to $x_4$ in $S^-(\mathfrak{h})$ and $V_s(\mathfrak{h})=\{x_1,x_2,x_3,x_4\}$. The Hoffman graph $\mathfrak{t}$ is the Hoffman graph $\mathfrak{t}_{1,1}$ in Figure \ref{fproofcase1} and this is not possible.

    If $w(x_4)=1$, then $w(x_5)=2$ by Claim \ref{p22ends}. This also implies that $w(x_m)=2$, as otherwise we find two weighted paths isomorphic to $(P,w|_P)$ in $(S^-(\mathfrak{h}),w)$. If $w(x_{m-1})=1$, the Hoffman graph $\mathfrak{t}$ is the Hoffman graph $\mathfrak{t}_{1,2}$ in Figure \ref{fproofcase1}; if $w(x_{m-1})=2$, the Hoffman graph $\mathfrak{t}$ is the Hoffman graph $\mathfrak{t}_{1,3}$ in Figure \ref{fproofcase1}.
    \begin{figure}[H]
  \ffigbox{
  \begin{subfloatrow}
  \ffigbox[1\FBwidth]{\caption*{$\mathfrak{t}_{1,1}$}}{\includegraphics[scale=0.8]{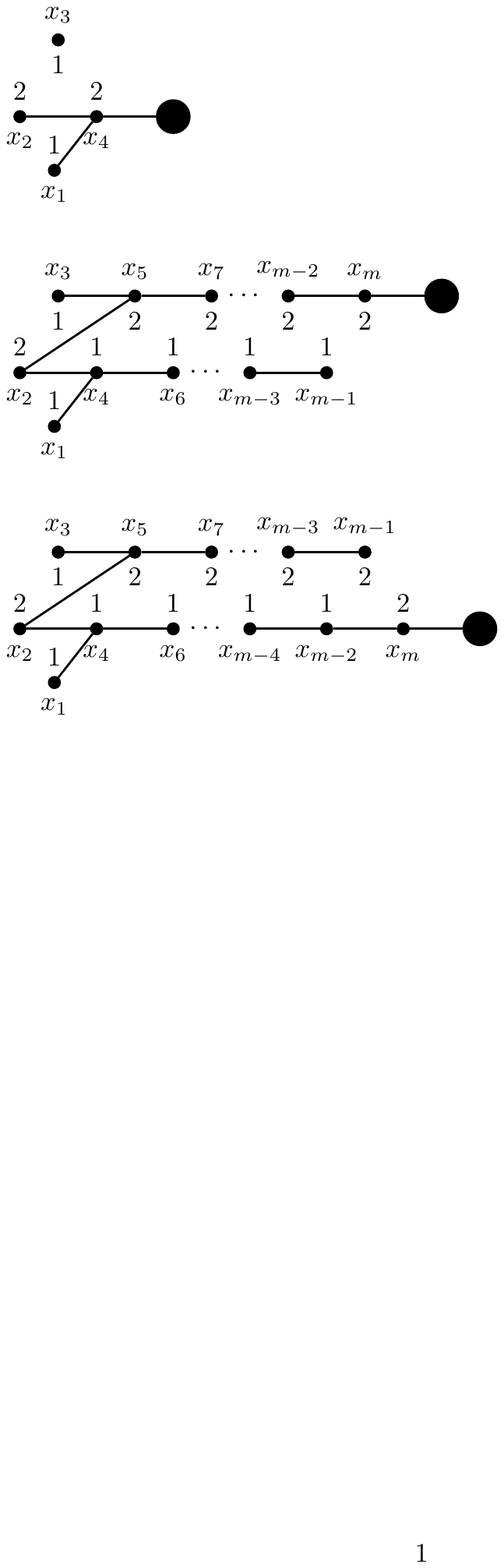}}\hspace{0.3cm}
  \ffigbox[1\FBwidth]{\caption*{$\mathfrak{t}_{1,2}$}}{\includegraphics[scale=0.8]{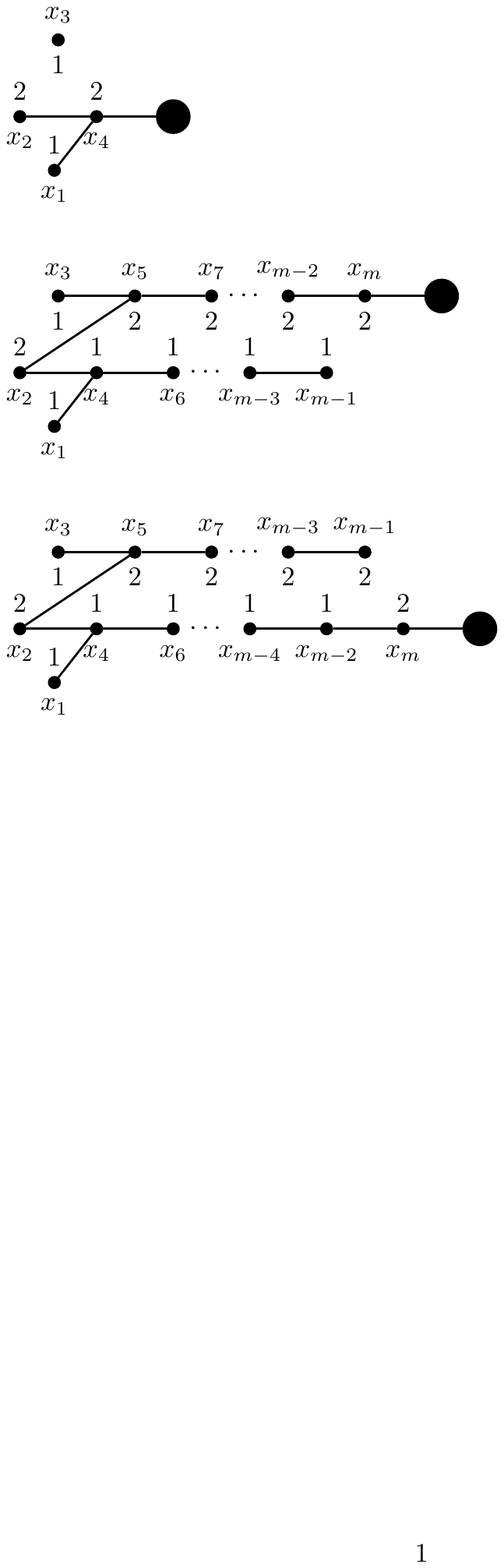}}\hspace{0.3cm}
  \ffigbox[1\FBwidth]{\caption*{$\mathfrak{t}_{1,3}$}}{\includegraphics[scale=0.8]{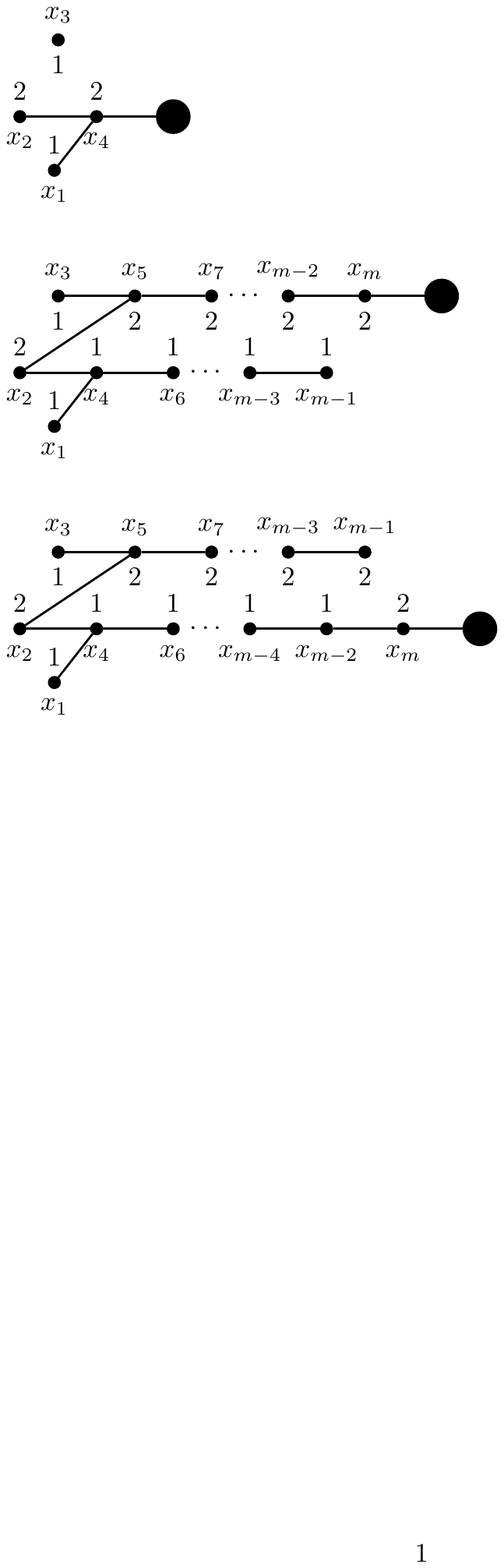}}\hspace{0.3cm}
  \end{subfloatrow}\vspace{0.3cm}
  }
  {\caption{}\label{fproofcase1}}
   \end{figure}
   By using Proposition \ref{generaldegree3tree}, we find that neither $\mathfrak{t}=\mathfrak{t}_{1,2}$ nor $\mathfrak{t}=\mathfrak{t}_{1,3}$, since $\mathfrak{t}$ satisfies the condition (ii) of Theorem \ref{step2}.
\item $w(x_1)=w(x_m)=2$. If $w(x_2)=w(x_{m-1})=1$ or there is a weighted path isomorphic to $(P,w|_P)$ in $(S^-(\mathfrak{h}),w)$, then $\mathfrak{t}$ is the tree-like Hoffman graph such that all its internal vertices have valency at most $3$ and all fat vertices are leaves. By using Proposition \ref{generaldegree3tree} again, we find that this is also not possible.

     Now we have $w(x_2)=2$ or $w(x_{m-1})=2$ and there is no weighted path isomorphic to $(P,w|_P)$ in $(S^-(\mathfrak{h}),w)$. Without loss of generality, we may assume $w(x_2)=2$. Then $\mathfrak{t}$ is one of the following in Figure \ref{fproofcase2} and $\mathfrak{t}$ is isomorphic to $\mathfrak{c}_m$.
        \begin{figure}[H]
        \ffigbox{
         \begin{subfloatrow}
         \ffigbox[1\FBwidth]{\caption*{$\mathfrak{t}_{2,1}$}}{\includegraphics[scale=0.7]{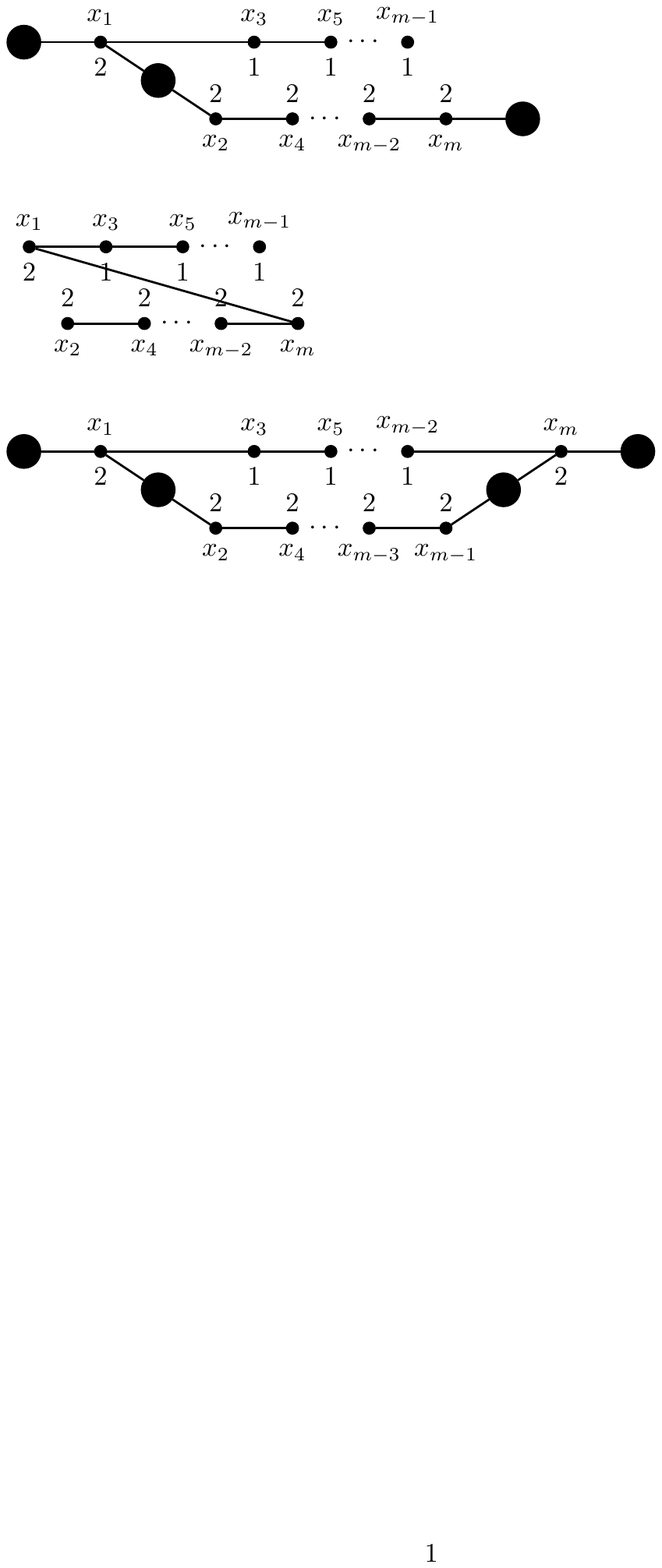}}\hspace{-0.3cm}
         \ffigbox[1\FBwidth]{\caption*{$\mathfrak{t}_{2,2}$}}{\includegraphics[scale=0.7]{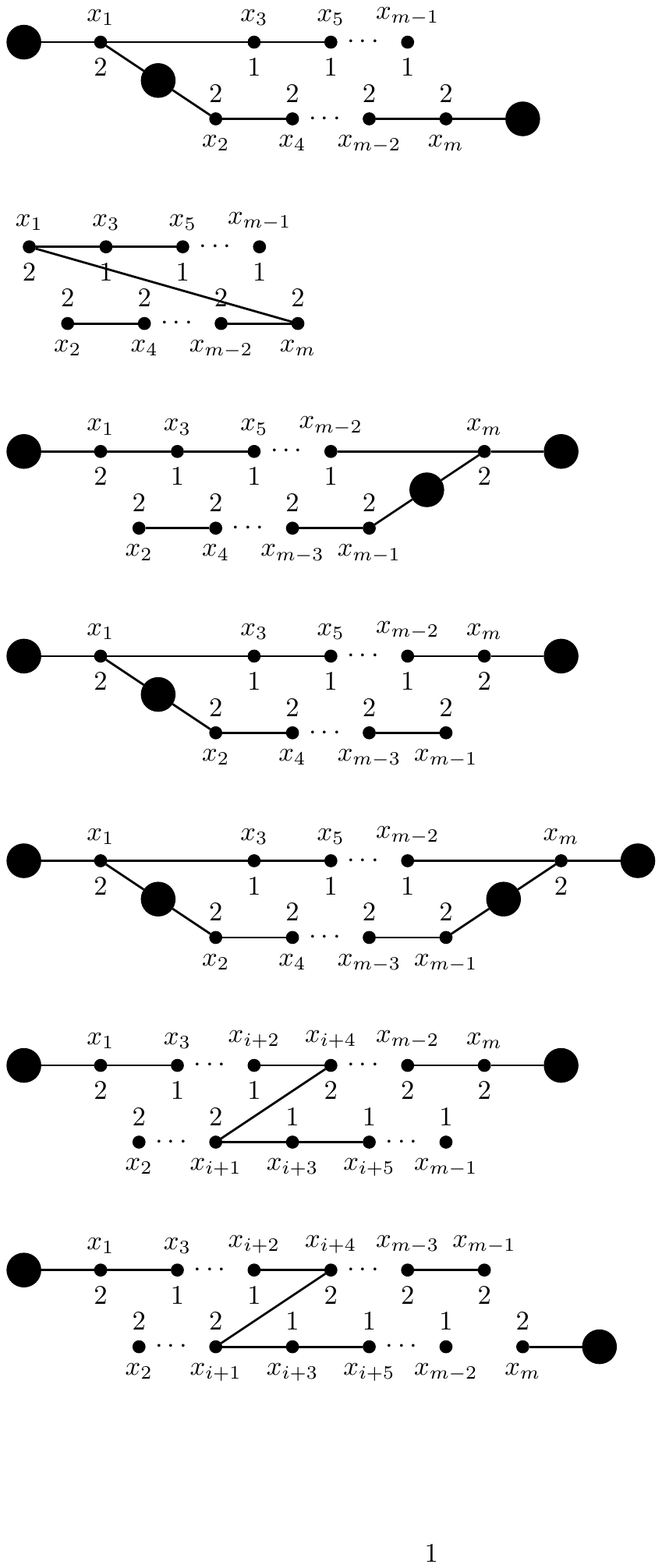}}\hspace{0cm}
         \ffigbox[1\FBwidth]{\caption*{$\mathfrak{t}_{2,3}$}}{\includegraphics[scale=0.7]{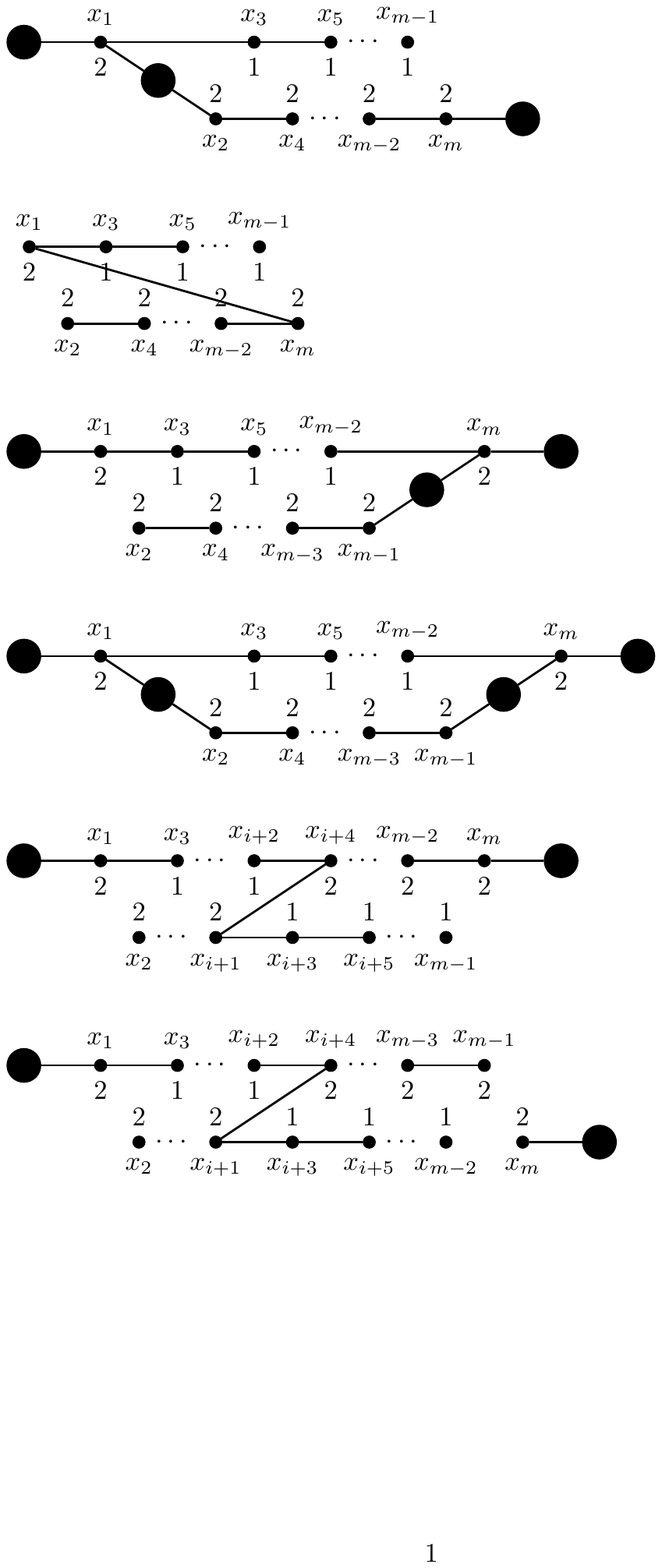}}\hspace{0cm}
         \end{subfloatrow}
          }
         {\caption{}\label{fproofcase2}}
         \end{figure}
\end{enumerate}
This completes the proof.
\end{proof}

\begin{cor}\label{integraltree}
Let $\mathfrak{t}$ be an integrally representable tree-like Hoffman graph of norm $3$ with $\lambda_{\min}(\mathfrak{t})=-3$, then $\mathfrak{t}=h_s(\mathfrak{t}_1,\ldots,\mathfrak{t}_{r(\mathfrak{t})})$, where $\mathfrak{t}_i$ is isomorphic to one of the Hoffman graphs in $\mathfrak{F}$, for $i=1,\ldots,r(\mathfrak{t})$.

Moreover, if $\mathfrak{t}$ is an integrally representable tree-like Hoffman graph of norm $3$, then $\mathfrak{t}$ is an induced Hoffman subgraph of an integrally representable tree-like Hoffman graph $\tilde{\mathfrak{t}}$ of norm $3$ with $\lambda_{\min}(\tilde{\mathfrak{t}})=-3$ and $V_s(\tilde{\mathfrak{t}})=V_s(\mathfrak{t})$.
\end{cor}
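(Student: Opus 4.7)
The plan is to combine Lemma \ref{step1}, Theorem \ref{step2}, and Proposition \ref{gesmalleigenvalue}(ii), together with a rigidity argument on the $(-3)$-eigenvector using Lemma \ref{-similar}.

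For the first assertion, assume $\lambda_{\min}(\mathfrak{t})=-3$ and apply Lemma \ref{step1} to obtain a family $\{\mathfrak{t}_1,\ldots,\mathfrak{t}_{r(\mathfrak{t})}\}$ of tree-like Hoffman graphs such that $\mathfrak{t}$ is an induced Hoffman subgraph of $\oplus_{i=1}^{r(\mathfrak{t})}\mathfrak{t}_i$ with the same slim graph, each $\mathfrak{t}_i$ satisfying the hypotheses of Theorem \ref{step2}. The recursive construction in the proof of Lemma \ref{step1} shows that every fat vertex of any $\mathfrak{t}_i$ is either a fat vertex of $\mathfrak{t}$ or is shared with some other $\mathfrak{t}_j$, so Remark \ref{condition}(iii) together with the definition of the stripping gives $\mathfrak{t}=h_s(\mathfrak{t}_1,\ldots,\mathfrak{t}_{r(\mathfrak{t})})$. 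Proposition \ref{gesmalleigenvalue}(ii) then forces $\lambda_{\min}(\mathfrak{t}_i)=-3$ for every $i$. To conclude each $\mathfrak{t}_i\in\mathfrak{F}$, I would invoke Theorem \ref{step2} and Lemma \ref{contained} to embed $\mathfrak{t}_i$ as an induced Hoffman subgraph of a maximal $\tilde{\mathfrak{t}}_i\in\mathfrak{F}$ with $V_s(\tilde{\mathfrak{t}}_i)=V_s(\mathfrak{t}_i)$, and then show $\mathfrak{t}_i=\tilde{\mathfrak{t}}_i$ by the rigidity below.

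Any extra fat vertex $f\in V_f(\tilde{\mathfrak{t}}_i)\setminus V_f(\mathfrak{t}_i)$ with slim-neighbor set $S_f$ contributes $\mathbf{1}_{S_f}\mathbf{1}_{S_f}^{T}$ to the positive semidefinite difference $Sp(\mathfrak{t}_i)-Sp(\tilde{\mathfrak{t}}_i)$. Writing the $(-3)$-eigenvector of $Sp(\mathfrak{t}_i)$ as $v=Du$ via Lemma \ref{-similar}(i), with $u>0$ Perron and $D$ a $\pm 1$-signing, and using $\lambda_{\min}(\tilde{\mathfrak{t}}_i)=-3$ from Lemma \ref{lemF}, one obtains
\[
-3(v,v)=(v,Sp(\mathfrak{t}_i)v)=(v,Sp(\tilde{\mathfrak{t}}_i)v)+\sum_f(\mathbf{1}_{S_f}^{T}v)^2\ge -3(v,v)+\sum_f(\mathbf{1}_{S_f}^{T}v)^2,
\]
so $\mathbf{1}_{S_f}^{T}v=0$ for every extra $f$. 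For $|S_f|=1$ this reads $D_{ss}u_s=0$, contradicting $u>0$. For $|S_f|\ge 2$, an inspection of the $\mathfrak{F}$-family shows that removing $f$ from $\tilde{\mathfrak{t}}_i$ disconnects its special graph (for instance, the vertex $f_{1,2}$ in $\mathfrak{c}_m$ is the unique $(-)$-edge joining the parallel paths $y_1y_3\cdots$ and $y_2y_4\cdots$), so $\mathfrak{t}_i$ would be decomposable, violating $\mathfrak{t}_i\in\mathfrak{T}$. Hence no extra $f$ exists, and $\mathfrak{t}_i=\tilde{\mathfrak{t}}_i\in\mathfrak{F}$, which proves the first part.

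For the second (``moreover'') assertion, drop the $\lambda_{\min}=-3$ hypothesis and repeat the construction: apply Lemma \ref{step1}, extend each $\mathfrak{t}_i$ to $\tilde{\mathfrak{t}}_i\in\mathfrak{F}$ by Theorem \ref{step2} and Lemma \ref{contained}, and set $\tilde{\mathfrak{t}}:=h_s(\tilde{\mathfrak{t}}_1,\ldots,\tilde{\mathfrak{t}}_{r(\mathfrak{t})})$. Once conditions (i)--(v) of Lemma \ref{fsum} have been verified, $\tilde{\mathfrak{t}}$ is tree-like, Remark \ref{condition}(iii) gives $\mathfrak{t}$ as an induced Hoffman subgraph of $\tilde{\mathfrak{t}}$, and Proposition \ref{gesmalleigenvalue}(ii) combined with Lemma \ref{lemF} yields $\lambda_{\min}(\tilde{\mathfrak{t}})=-3$. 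The main obstacle is exactly this tree-likeness check, namely, ensuring that no two of the $\tilde{\mathfrak{t}}_i$'s share more than one fat vertex and that any shared fat vertex is a leaf in at least one of them; this follows by careful bookkeeping of the sharing pattern through the recursive construction in the proof of Lemma \ref{step1}.
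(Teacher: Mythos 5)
Your proposal follows the same route as the paper, whose entire proof is the single sentence that Proposition \ref{gesmalleigenvalue}, Lemma \ref{step1} and Theorem \ref{step2} ``show the result immediately''; you assemble exactly these three ingredients, and your added Perron-eigenvector rigidity step (forcing $\mathbf{1}_{S_f}^{T}v=0$ for every extra fat vertex and hence ruling them out) is a correct and genuinely needed piece of glue that the paper leaves implicit, as is your bookkeeping that the stripped sum has no fat vertices beyond those of $\mathfrak{t}$. The one soft spot is the unproved assertion that each $\mathfrak{t}_i$ embeds in a \emph{maximal} element of $\mathfrak{T}$ with the same slim vertex set: Theorem \ref{step2} only describes the maximal elements, so you still owe the (short, since every slim vertex carries at most three fat neighbors and the fat-extension poset over a fixed slim graph is therefore finite) argument that such a maximal extension exists and is maximal in the sense of that theorem.
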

\begin{proof}
Proposition \ref{gesmalleigenvalue}, Lemma \ref{step1} and Theorem \ref{step2} show the result immediately.
\end{proof}

\begin{cor}\label{maintheoroem}
 Let $T$ be an integrally representable tree of norm $3$ with spectral radius $3$. Then $T=h_s(\mathfrak{t}_1,\ldots,\mathfrak{t}_{r(T)})$, where $\mathfrak{t}_i$ is isomorphic to one of the Hoffman graphs in $\mathfrak{F}$, for $i=1,\ldots,r(T)$.

Moreover, if $T$ is an integrally representable tree of norm $3$, then $T$ is an induced subgraph of an integrally representable tree of norm $3$ with spectral radius $3$.
\end{cor}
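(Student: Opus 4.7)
The first statement is a direct application of Corollary \ref{integraltree}. Viewing the tree $T$ as a tree-like Hoffman graph with only slim vertices (which is legitimate by the convention established in Section \ref{secprelim}), we recall that any tree is bipartite, so its adjacency spectrum is symmetric about zero; hence the hypothesis that $T$ has spectral radius $3$ is equivalent to $\lambda_{\min}(T) = -3$. Since $T$ is by hypothesis integrally representable of norm $3$, the first part of Corollary \ref{integraltree} applied to $T$ immediately yields the desired decomposition $T = h_s(\mathfrak{t}_1, \ldots, \mathfrak{t}_{r(T)})$ with each $\mathfrak{t}_i \in \mathfrak{F}$.

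For the ``moreover'' statement, let $T$ be an integrally representable tree of norm $3$ with no assumption on the spectral radius. Viewing $T$ again as a tree-like Hoffman graph with only slim vertices and applying the second part of Corollary \ref{integraltree}, we obtain a tree-like Hoffman graph $\tilde{\mathfrak{t}}$ that is integrally representable of norm $3$, satisfies $\lambda_{\min}(\tilde{\mathfrak{t}}) = -3$ and $V_s(\tilde{\mathfrak{t}}) = V(T)$, and contains $T$ as an induced Hoffman subgraph. The remaining task is to promote $\tilde{\mathfrak{t}}$, which may still carry fat vertices, to a genuine tree $T'$, i.e., a Hoffman graph with no fat vertices, without losing integral representability of norm $3$ or the value $\lambda_{\min} = -3$.

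The plan is to enlarge $\tilde{\mathfrak{t}}$ by iteratively attaching further tree-like pieces from $\mathfrak{F}$ via the stripped-sum operation $h_s$, choosing each new piece so that one of its fat vertices coincides with a still-unshared fat vertex of the current Hoffman graph and is thereby stripped out. Lemma \ref{fsum} supplies the combinatorial conditions --- at most one shared fat vertex per pair, no triple intersection, and the shared vertex being a leaf for at least one of the two pieces --- that keep the result tree-like, while Proposition \ref{gesmalleigenvalue}(ii) guarantees that the running stripped sum retains $\lambda_{\min} = -3$ because each attached piece, being a member of $\mathfrak{F}$, has $\lambda_{\min} = -3$ by Lemma \ref{lemF}. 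Once no unshared fat vertex remains, the resulting stripped Hoffman graph is a tree $T'$ extending $T$, integrally representable of norm $3$, and with spectral radius $3$ by the bipartiteness observation from the first paragraph.

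The hard part is verifying that this absorption procedure always terminates in finitely many steps without introducing cycles or an ever-growing residue of unshared fat vertices. This is where the choice of pieces from $\mathfrak{F}$ matters: the three small graphs in $\mathfrak{F}'$ plus the infinite family $\mathfrak{C} = \{\mathfrak{c}_m : m \geq 2\}$ together carry fat vertices in very restricted patterns (fat leaves and, in each $\mathfrak{c}_m$, a unique interior fat vertex $f_{1,2}$), and by selecting $\mathfrak{c}_{m'}$ of sufficiently large $m'$ one can pair off any unwanted fat leaf of a previously attached piece against a fat leaf of the new $\mathfrak{c}_{m'}$. A careful case analysis of the fat-vertex patterns in $\mathfrak{F}'\cup\mathfrak{C}$, combined with the constraints imposed by Lemma \ref{fsum}, is therefore the main technical ingredient needed to complete the argument.
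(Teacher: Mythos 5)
Your first paragraph is correct and is exactly the paper's argument: bipartiteness of $T$ gives the equivalence between spectral radius $3$ and $\lambda_{\min}(T)=-3$, and then the first part of Corollary \ref{integraltree} applies verbatim.

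The ``moreover'' part, however, has a genuine gap, and you acknowledge it yourself: you never actually produce the tree $T'$, you only outline an ``absorption procedure'' and defer its termination and cycle-freeness to ``a careful case analysis.'' Worse, the specific mechanism you propose --- pairing off an unwanted fat leaf against a fat leaf of a freshly attached $\mathfrak{c}_{m'}$ --- cannot work as a reduction step: every $\mathfrak{c}_{m'}$ carries three fat vertices of its own ($f_1$, $f_{m'}$ and the non-leaf $f_{1,2}$), so each such attachment removes one unshared fat vertex and introduces at least two new ones; the residue of fat vertices grows rather than shrinks, and the process as described does not terminate. The paper's resolution is a one-shot construction with a much smaller gadget. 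Starting from $T=$ slim graph of $h_s(\mathfrak{t}_1,\ldots,\mathfrak{t}_r)$ with all $\mathfrak{t}_i\in\mathfrak{F}$, one observes that every remaining fat vertex $f_1,\ldots,f_p$ of $h_s(\mathfrak{t}_1,\ldots,\mathfrak{t}_r)$ is a leaf, and then attaches at each $f_i$ a copy $\mathfrak{t}_i'$ of the Hoffman graph \raisebox{-0.5ex}{\includegraphics[scale=0.13]{photo3}} (a single fat vertex with three pairwise non-adjacent slim neighbors, which belongs to $\mathfrak{F}'$), identifying its unique fat vertex with $f_i$. Condition (v) of Lemma \ref{fsum} is met because $f_i$ is a leaf of the piece it came from, each attachment strips exactly one fat vertex and contributes none, so $h_s(\mathfrak{t}_1,\ldots,\mathfrak{t}_r,\mathfrak{t}_1',\ldots,\mathfrak{t}_p')$ has no fat vertices at all, i.e.\ it is a tree containing $T$ as an induced subgraph; Proposition \ref{gesmalleigenvalue} keeps its smallest eigenvalue equal to $-3$, hence its spectral radius equal to $3$. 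Note also that the leaf property of the residual fat vertices is something your sketch would need to verify in any case before invoking Lemma \ref{fsum}(v); without it, attaching any piece at a fat vertex with two slim neighbors creates a cycle in the underlying graph. To repair your proof, replace the open-ended absorption scheme by this single round of attachments of \raisebox{-0.5ex}{\includegraphics[scale=0.13]{photo3}}.
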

\begin{proof}
Let $T$ be an integral representable tree of norm $3$. If $\lambda_{\min}(T)=-3$, the result is obtained straightforward from Corollary \ref{integraltree}. If $\lambda_{\min}(T)>-3$, then $T$ is an induced subgraph of $h_s(\mathfrak{t}_1,\ldots,\mathfrak{t}_{r})$, where $h_s(\mathfrak{t}_1,\ldots,\mathfrak{t}_{r})$ has $T$ as slim graph and $\mathfrak{t}_i$ is isomorphic to one of the Hoffman graphs in $\mathfrak{F}$, for $i=1,\ldots,r$. Note that each fat vertex of $h_s(\mathfrak{t}_1,\ldots,\mathfrak{t}_{r})$ is a leaf. Assume $V_f(h_s(\mathfrak{t}_1,\ldots,\mathfrak{t}_{r}))=\{f_1,\ldots,f_p\}$ and define the Hoffman graph $h_s(\mathfrak{t}_1,\ldots,\mathfrak{t}_{r},\mathfrak{t}_1',\ldots,\mathfrak{t}_{p}')$, where $\mathfrak{t}_i'$ is isomorphic to  \raisebox{-0.5ex}{\includegraphics[scale=0.13]{photo3}} with the fat vertex $f_i$. Then $h_s(\mathfrak{t}_1,\ldots,\mathfrak{t}_{r},\mathfrak{t}_1',\ldots,\mathfrak{t}_{p}')$ with smallest eigenvalue $-3$ has no fat vertices and has $T$ as an induced subgraph. This completes the proof.
\end{proof}

\section{Seedlings}\label{secconclusion}
In this section, we introduce and discuss seedlings. They generalize the tree-like Hoffman graphs.
\begin{de}\label{seedling}
Let $\mu$ be a positive real number. A tree-like Hoffman graph $\mathfrak{t}$ with $\lambda_{\min}(\mathfrak{t})\geq -\mu$ is called a \emph{$\mu$-seedling} if the following conditions are satisfied:
\begin{enumerate}
\item $\mathfrak{t}$ is $(-\mu)$-irreducible;
\item $\mathfrak{t}$ is not a proper induced Hoffman subgraph of any $(-\mu)$-irreducible tree-like Hoffman graph.
\end{enumerate}
\end{de}

The only $1$-seedling is \raisebox{-0.5ex}{\includegraphics[scale=0.13]{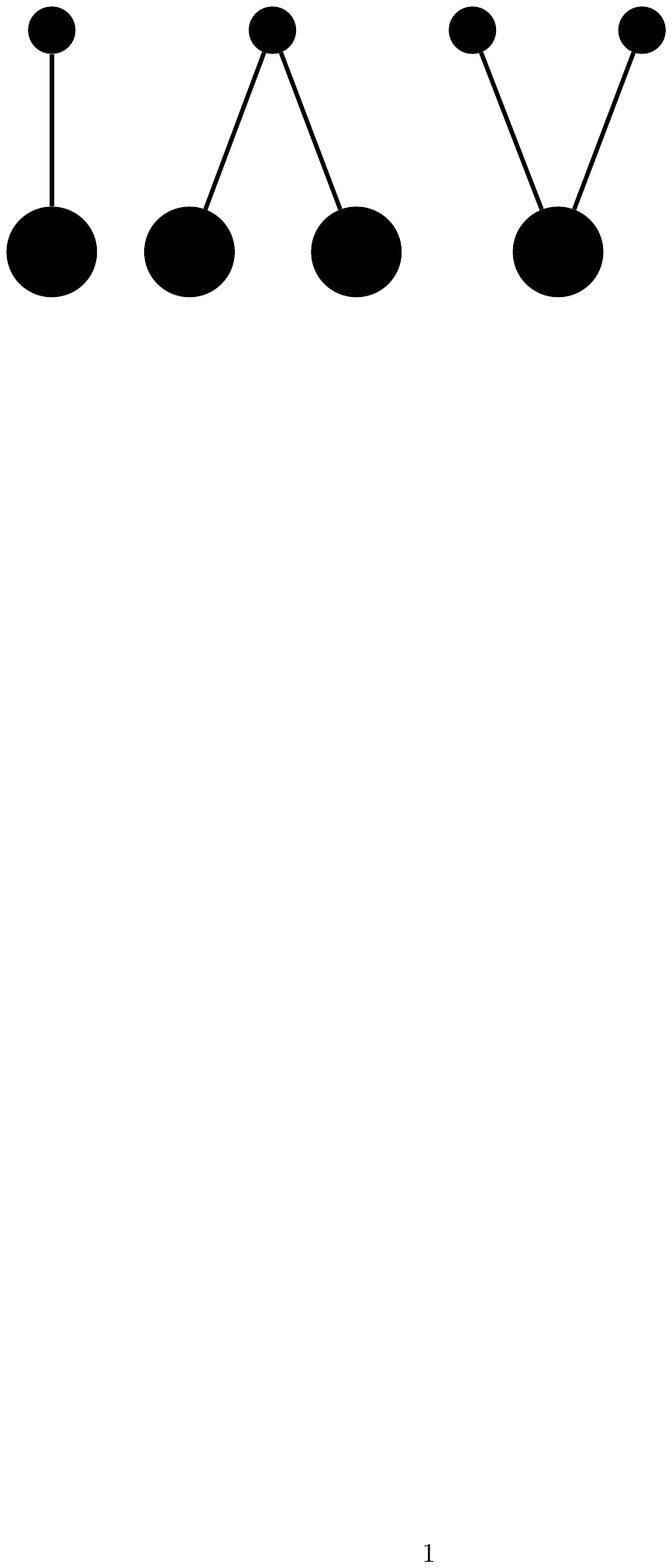}} and the $2$-seedlings are \raisebox{-0.5ex}{\includegraphics[scale=0.13]{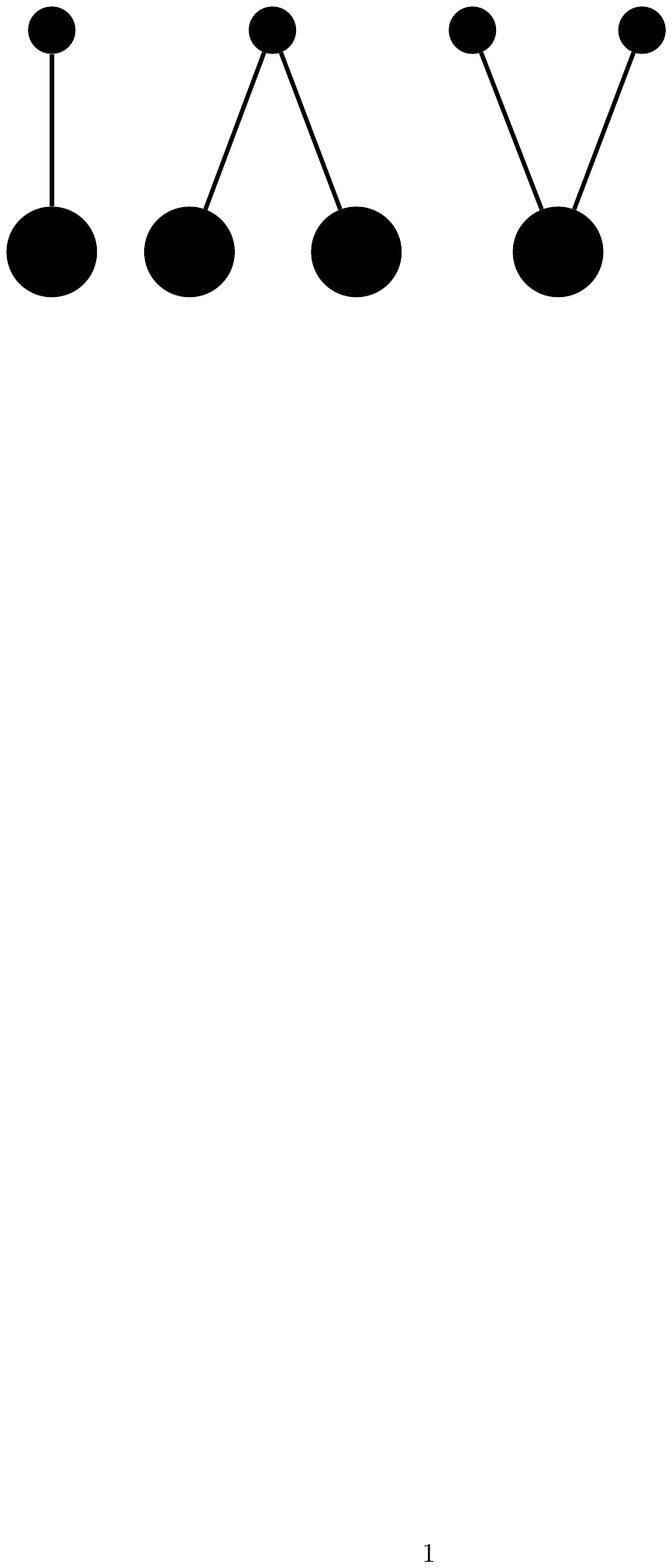}},
 \raisebox{-0.5ex}{\includegraphics[scale=0.13]{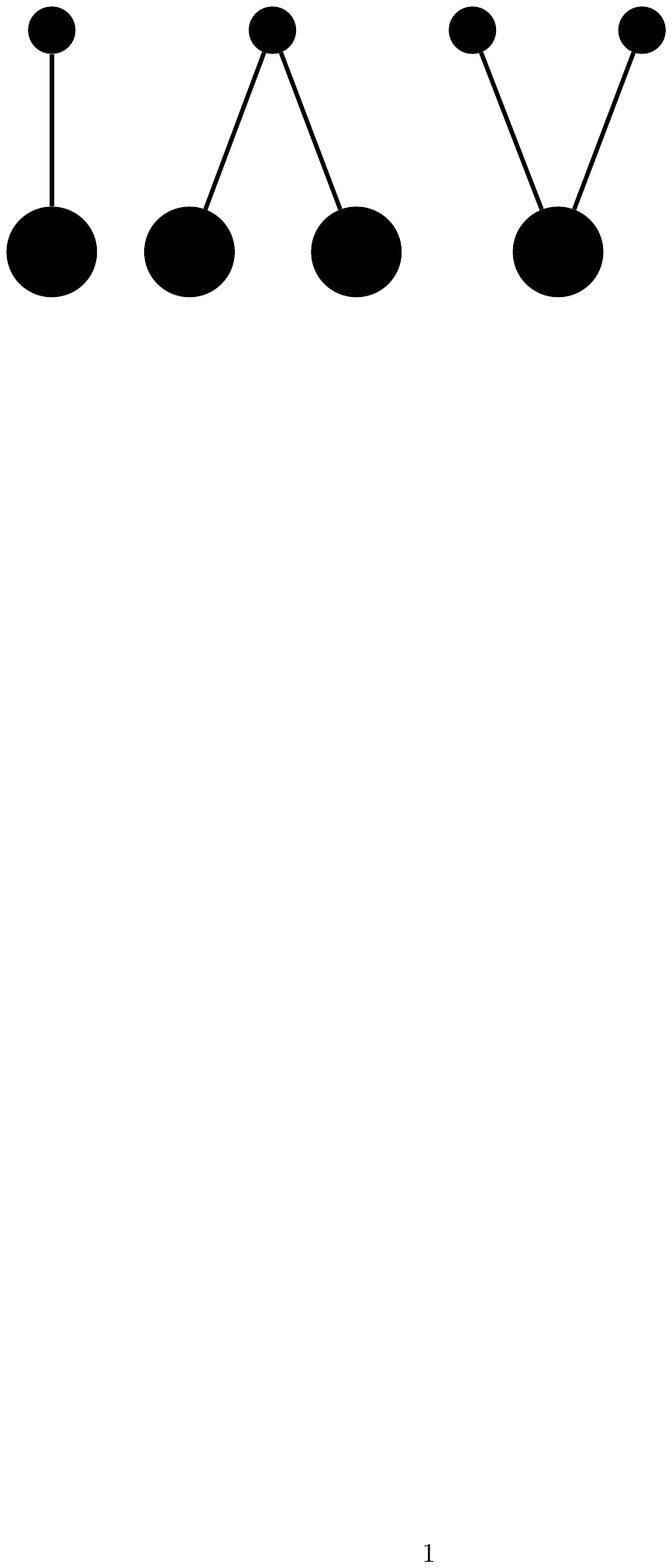}}, $\widetilde{E}_6$, $\widetilde{E}_7$ and $\widetilde{E}_8$. We think that it is important to classify the $3$-seedlings. This would help us to better understand integral lattices generated by norm $3$ vectors. For 3-seedlings, we have the following results.

 \begin{lem}\label{3seedling-3}
 Let $\mathfrak{t}$ be a $(-3)$-irreducible tree-like Hoffman graph with $\lambda_{\min}(\mathfrak{t})=-3$. Then $\mathfrak{t}$ is a $3$-seedling.
 \end{lem}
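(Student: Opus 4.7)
The plan is a proof by contradiction, exploiting the Perron-Frobenius description of the $-3$-eigenspace of a tree-like Hoffman graph furnished by Lemma \ref{-similar}. Suppose $\mathfrak{t}$ is a proper induced Hoffman subgraph of some $(-3)$-irreducible tree-like Hoffman graph $\mathfrak{t}'$. The goal is to exhibit a $-3$-eigenvector of $Sp(\mathfrak{t}')$ with a zero entry, contradicting the entrywise non-vanishing of the Perron eigenvector.

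First I would show $\lambda_{\min}(\mathfrak{t}')=-3$. Let $B$ be the principal submatrix of $Sp(\mathfrak{t}')$ indexed by $V_s(\mathfrak{t})$. Decomposing $C'{C'}^T$ as a sum of rank-one PSD contributions $\mathbf{1}_{N(f)}\mathbf{1}_{N(f)}^T$ over $f \in V_f(\mathfrak{t}')$ and restricting to $V_s(\mathfrak{t})$, the contributions from $f \in V_f(\mathfrak{t})$ reproduce $CC^T$ exactly (since $\mathfrak{t}$ is induced in $\mathfrak{t}'$), while the remaining $f \in V_f(\mathfrak{t}') \setminus V_f(\mathfrak{t})$ add further rank-one PSD summands. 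Hence $B = Sp(\mathfrak{t}) - P$ with $P$ PSD, so $\lambda_{\min}(B) \leq \lambda_{\min}(Sp(\mathfrak{t})) = -3$, and Theorem \ref{interlacing} gives $\lambda_{\min}(\mathfrak{t}') \leq \lambda_{\min}(B) \leq -3$. Combined with the hypothesis $\lambda_{\min}(\mathfrak{t}') \geq -3$, we get $\lambda_{\min}(\mathfrak{t}') = -3$.

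I then split into two cases. If $V_s(\mathfrak{t}') \supsetneq V_s(\mathfrak{t})$, pick an eigenvector $y$ of $B$ at $-3$ and zero-extend to a vector $v$ indexed by $V_s(\mathfrak{t}')$. The identity $v^T Sp(\mathfrak{t}') v = y^T B y = -3 \|v\|^2$ together with $\lambda_{\min}(\mathfrak{t}') = -3$ makes $v$ a $-3$-eigenvector of $Sp(\mathfrak{t}')$. By Lemma \ref{-similar}(i), together with Remark \ref{condition}(i) and Lemma \ref{connected}, $-Sp(\mathfrak{t}')$ is diagonally $\pm 1$-conjugate to a non-negative irreducible matrix; Lemma \ref{-similar}(ii) says its $-3$-eigenspace is one-dimensional; and Theorem \ref{perronfrobenius} then forces the unique $-3$-eigenvector to have all entries nonzero, contradicting the zeros of $v$ on $V_s(\mathfrak{t}') \setminus V_s(\mathfrak{t})$.

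If instead $V_s(\mathfrak{t}') = V_s(\mathfrak{t})$, the properness of the inclusion forces at least one new fat vertex. Since $\mathfrak{t}$ is already a tree-like Hoffman graph whose underlying graph is connected, any new fat vertex with two or more slim neighbors would close a cycle in $\mathfrak{t}'$, violating tree-likeness. Each new fat $f$ therefore has a single slim neighbor $x(f)$, giving $Sp(\mathfrak{t}') = Sp(\mathfrak{t}) - \sum_{f \text{ new}} e_{x(f)} e_{x(f)}^T$. For a $-3$-eigenvector $v$ of $Sp(\mathfrak{t}')$, combining $v^T Sp(\mathfrak{t}') v = -3\|v\|^2$ with $v^T Sp(\mathfrak{t}) v \geq -3 \|v\|^2$ forces $v_{x(f)} = 0$ for every new $f$, again contradicting the Perron-Frobenius entrywise non-vanishing. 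The main subtlety I anticipate is getting the PSD comparison $B \leq Sp(\mathfrak{t})$ correct through the fat-vertex bookkeeping; everything else follows directly from the preliminaries.
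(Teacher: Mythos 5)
Your overall strategy is sound and is essentially the argument the paper compresses into one line (Lemma \ref{-similar}(i) plus Perron--Frobenius): the setup $B=Sp(\mathfrak{t})-P$ with $P$ positive semidefinite, the conclusion $\lambda_{\min}(\mathfrak{t}')=-3$, and the whole of your first case (zero-extending a $-3$-eigenvector of the principal submatrix and contradicting the entrywise non-vanishing of the Perron vector of $-D'Sp(\mathfrak{t}')D'$) are all correct.

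The second case, however, contains a genuine error. From $Sp(\mathfrak{t}')=Sp(\mathfrak{t})-\sum_{f}{\bf e}_{x(f)}{\bf e}_{x(f)}^T$ and a $-3$-eigenvector $v$ of $Sp(\mathfrak{t}')$ you get $v^TSp(\mathfrak{t})v=-3\|v\|^2+\sum_f v_{x(f)}^2$, so the inequality $v^TSp(\mathfrak{t})v\geq -3\|v\|^2$ you invoke reduces to $\sum_f v_{x(f)}^2\geq 0$, which is vacuous; it does not force $v_{x(f)}=0$. The inequality points the wrong way because you applied the Rayleigh-quotient comparison to the eigenvector of the \emph{larger} (more negative) matrix. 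The repair is to run the same computation with the $-3$-eigenvector $u$ of $Sp(\mathfrak{t})$ instead: then $u^TSp(\mathfrak{t}')u=-3\|u\|^2-\sum_f u_{x(f)}^2$, and $\lambda_{\min}(\mathfrak{t}')\geq-3$ forces $u_{x(f)}=0$ for every new fat vertex $f$; but by Lemma \ref{-similar} and Theorem \ref{perronfrobenius} applied to $\mathfrak{t}$ itself, $u$ is a signed Perron vector and has no zero entries, giving the desired contradiction. (Equivalently, both cases can be merged: the $-3$-eigenvector of $Sp(\mathfrak{t})$, zero-extended to $V_s(\mathfrak{t}')$, already witnesses $\lambda_{\min}(\mathfrak{t}')<-3$ whenever anything new is attached.) With that one-line fix the proof is complete.
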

 \begin{proof}
This follows from Lemma \ref{-similar} (i) and Theorem \ref{perronfrobenius} (v) directly.
 \end{proof}

\begin{re}\label{F-3seedlings}
Each of the Hoffman graphs in $\mathfrak{F}$ is a $3$-seedling by Lemma \ref{3seedling-3} and Lemma \ref{lemF}. It follows, by Corollary \ref{integraltree}, that they are the only integrally representable $3$-seedlings of norm $3$.
\end{re}
Note that if you add a fat vertex to each of the slim vertices of a $\mu$-seedling, you obtain a $(\mu+1)$-seedling. This means that \raisebox{-0.5ex}{\includegraphics[scale=0.13]{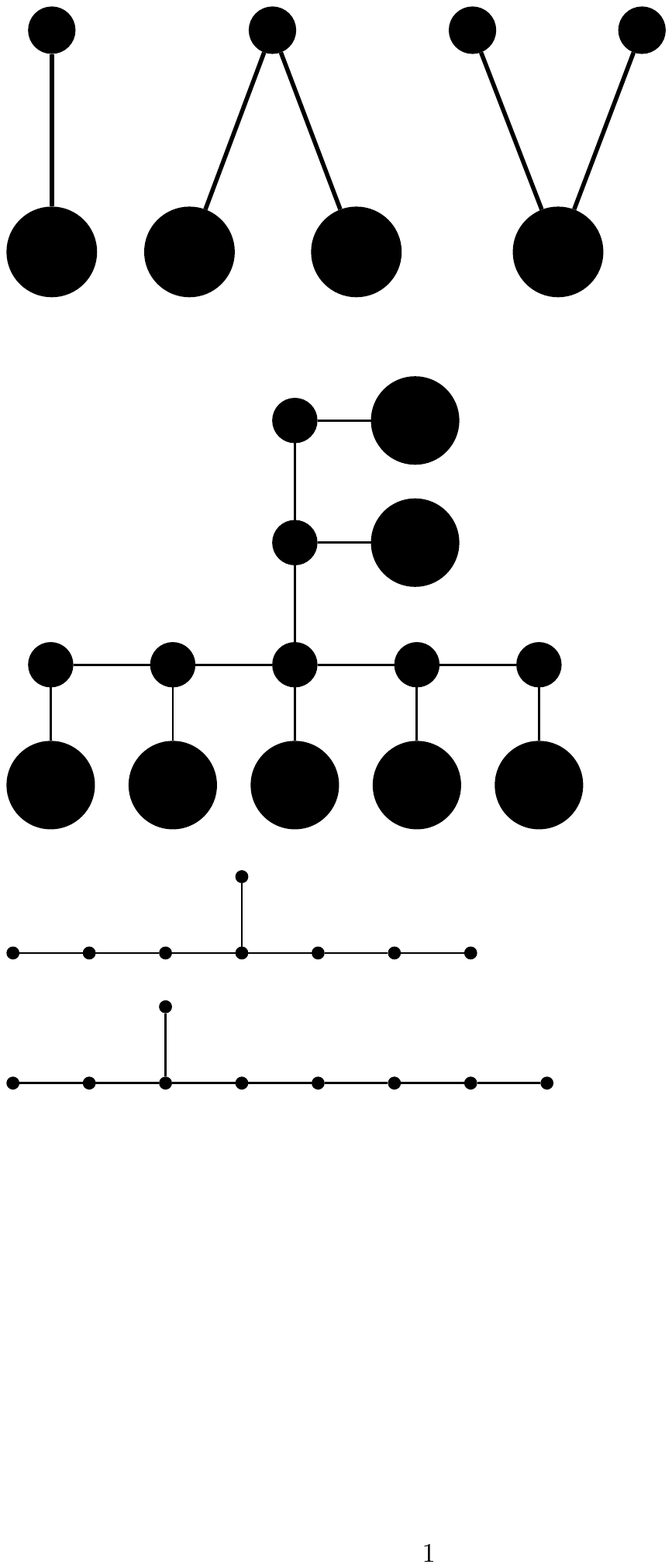}}, \raisebox{-0.5ex}{\includegraphics[scale=0.13]{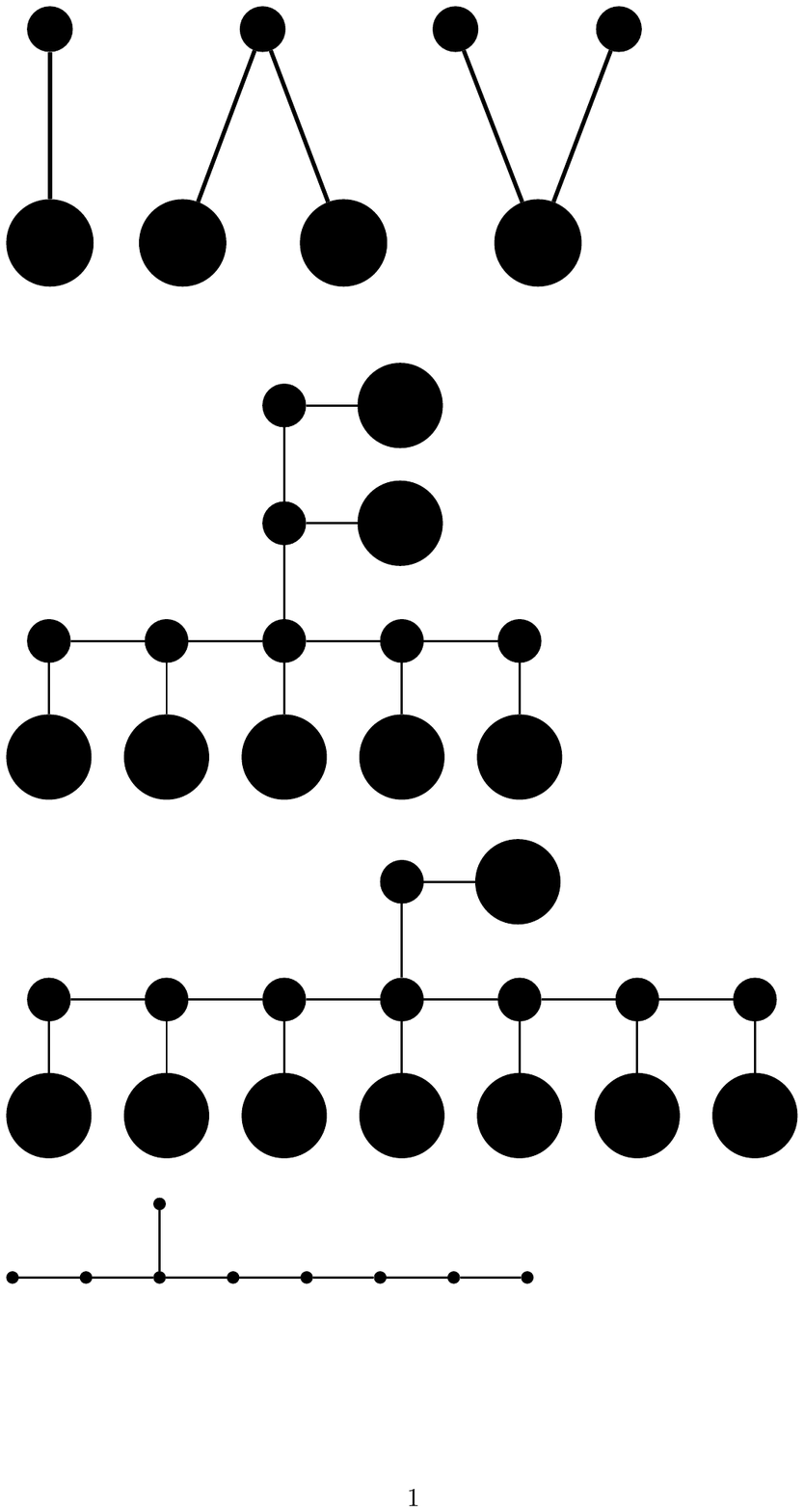}}, \raisebox{-0.5ex}{\includegraphics[scale=0.13]{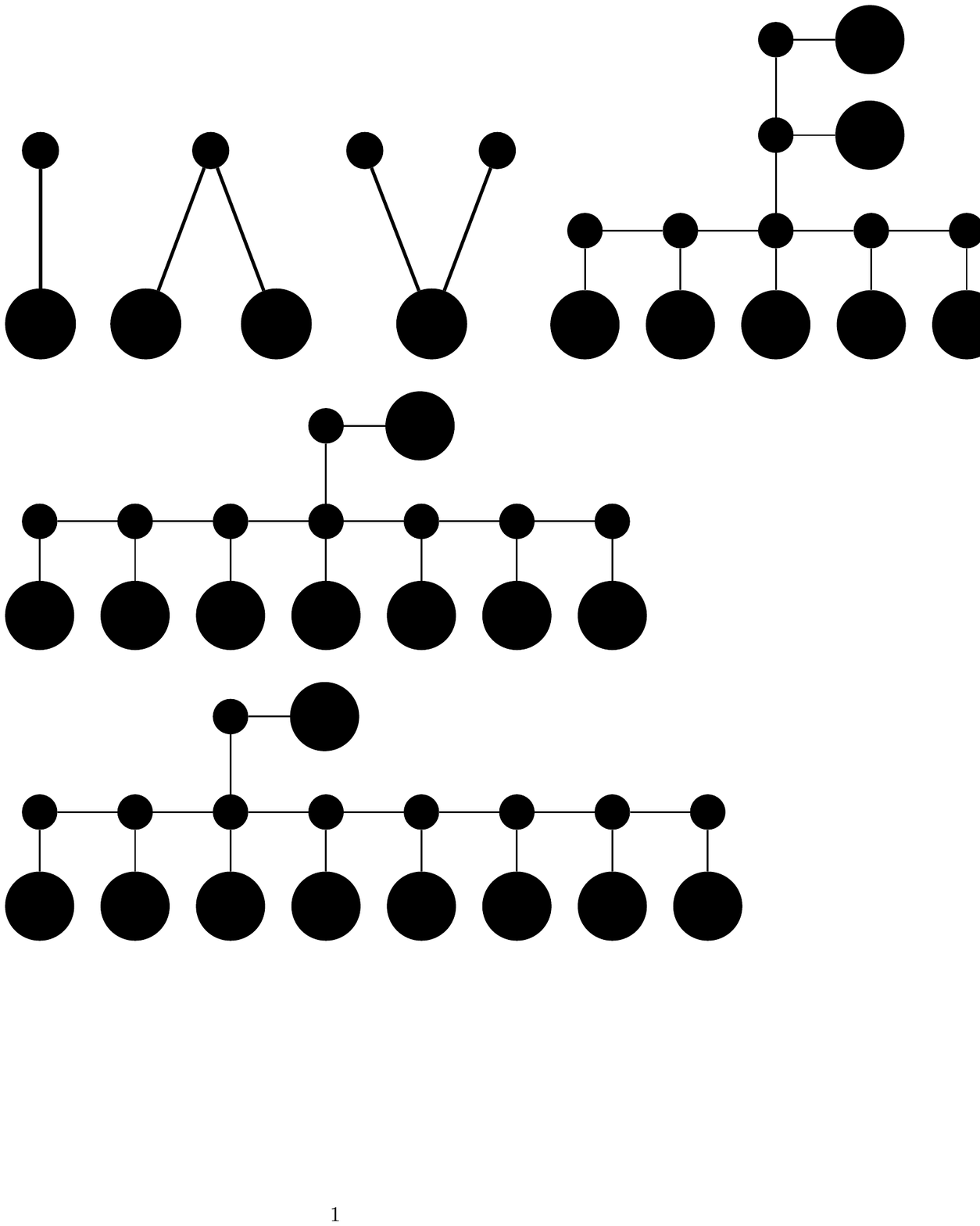}} are also $3$-seedlings. There are other $3$-seedlings, like \raisebox{-0.5ex}{\includegraphics[scale=0.13]{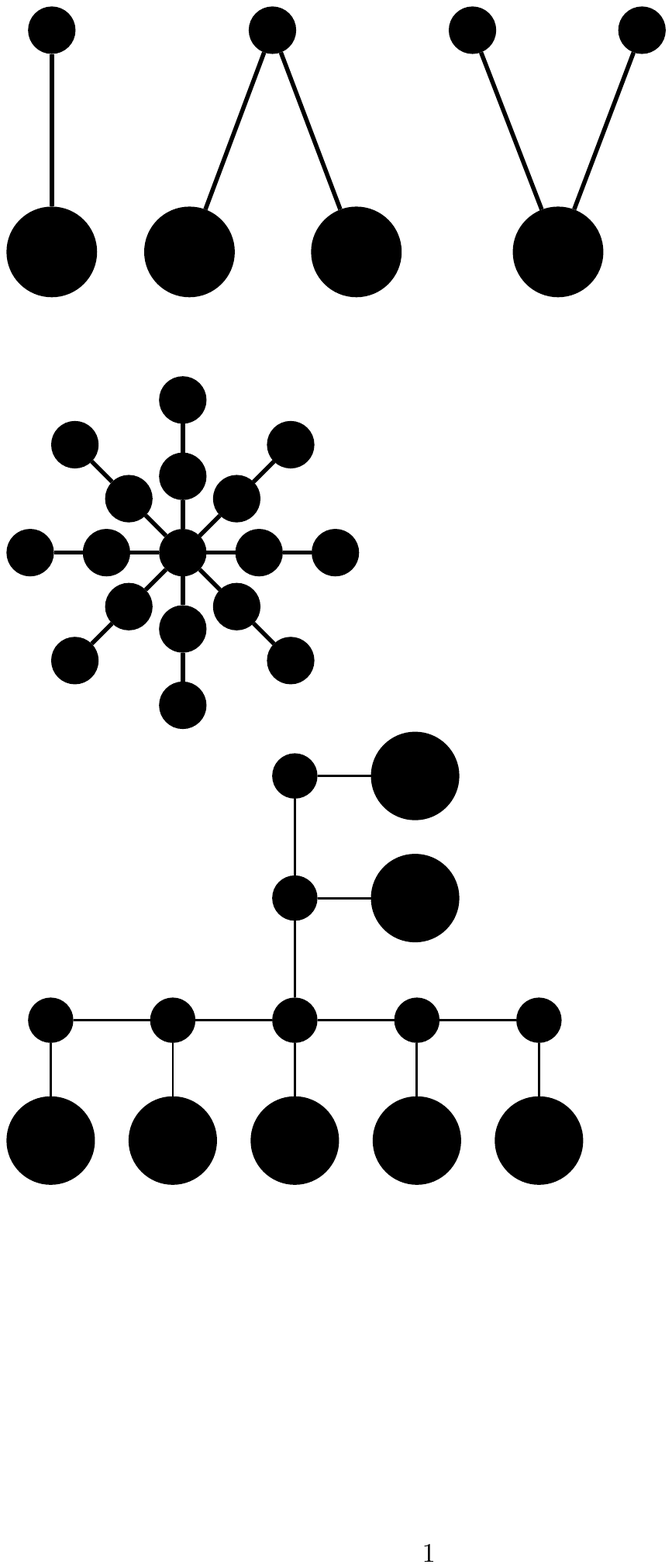}} and \raisebox{-0.5ex}{\includegraphics[scale=0.13]{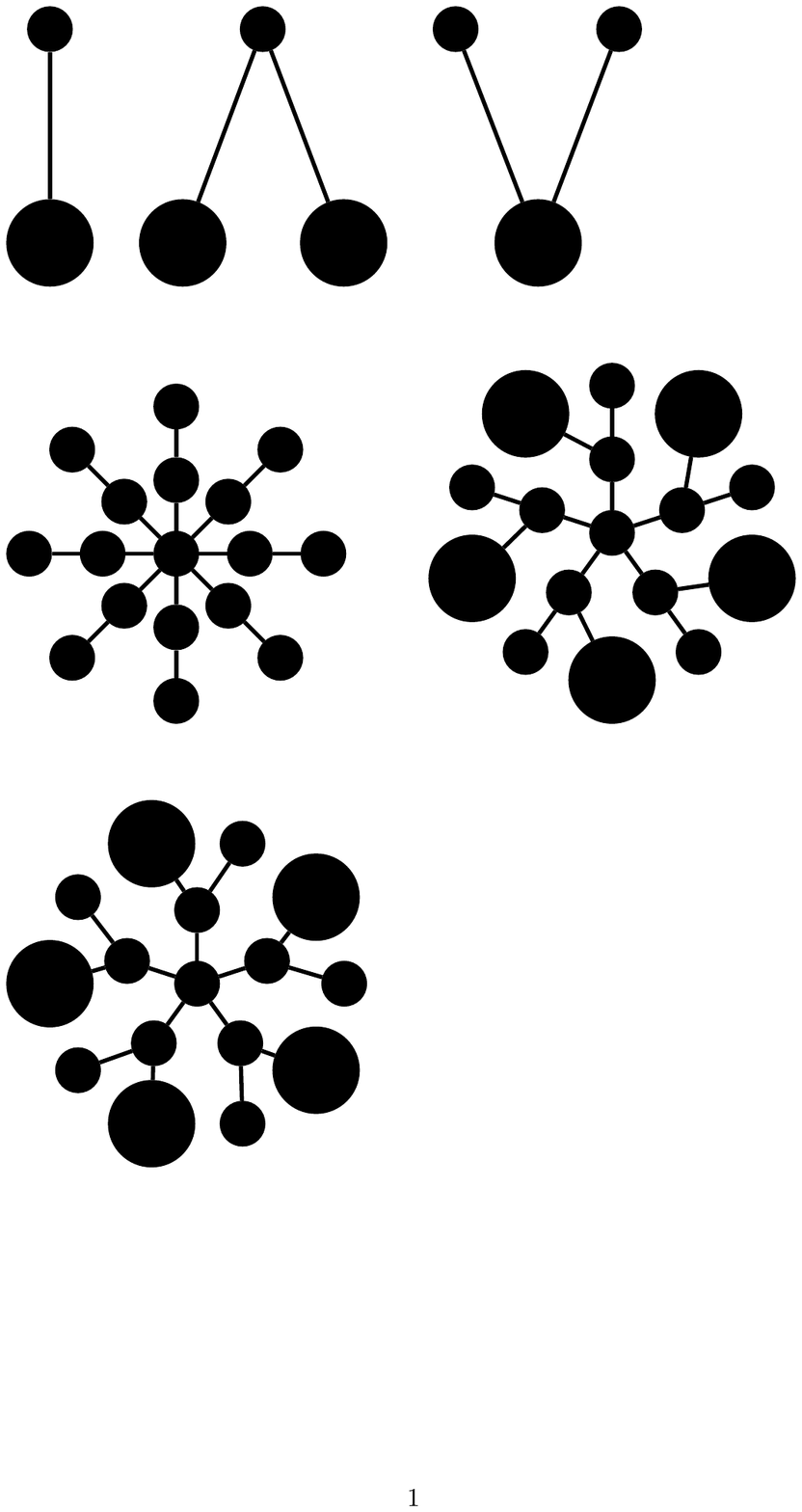}}.

In Theorem \ref{fatseedling} below, we give a classification of the fat $3$-seedlings.
\begin{pro}\label{Esimilar}
For each edge-signed graph $(\widetilde{E}_i,\text{sgn})$ with no two $(-)$-edges incident, there exists a unique fat $3$-seedling having $(\widetilde{E}_i,\text{sgn})$ as its special graph, where $i=6,7,8$.
\end{pro}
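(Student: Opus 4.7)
The plan is to construct the Hoffman graph $\mathfrak{t}$ explicitly from the edge-signed graph $(\widetilde{E}_i,\text{sgn})$ and then verify each property required by the definition of a fat $3$-seedling, with the spectral radius $2$ of $\widetilde{E}_i$ doing the heavy lifting.

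\emph{Existence.} I would take $V_s(\mathfrak{t})=V(\widetilde{E}_i)$, declare the endpoints of every $(+)$-edge to be slim-adjacent, replace each $(-)$-edge $\{x,y\}$ by a length-$2$ path through a new fat vertex $f_{xy}$, and attach one private fat leaf to every slim vertex not lying on any $(-)$-edge. The hypothesis that no two $(-)$-edges are incident guarantees that every fat vertex has at most two slim neighbors, and contracting each path through an $f_{xy}$ to a single edge recovers $\widetilde{E}_i$ with pendant fat leaves, which is a tree. Thus $\mathfrak{t}$ is tree-like and fat with $|N^{f}_{\mathfrak{t}}(x)|=1$ for every slim $x$.

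\emph{Spectrum.} Because each slim vertex has exactly one fat neighbor and the off-diagonal entries of $Sp(\mathfrak{t})$ coincide with the entries of the signed adjacency matrix $B$ of $(\widetilde{E}_i,\text{sgn})$, one gets $Sp(\mathfrak{t})=B-I$. By Lemma \ref{edgesignedtree}, $B$ is switching-equivalent to $-A_{\widetilde{E}_i}$, the negated ordinary adjacency matrix of $\widetilde{E}_i$. Since $\widetilde{E}_i$ is bipartite with spectral radius $2$, we have $\lambda_{\min}(B)=-2$, whence $\lambda_{\min}(\mathfrak{t})=-3$, of multiplicity one by Lemma \ref{-similar}(ii).

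\emph{$(-3)$-irreducibility.} If $\mathfrak{t}$ embedded in some $\widetilde{\mathfrak{t}}=\widetilde{\mathfrak{t}}_1\oplus\widetilde{\mathfrak{t}}_2$ with each $\lambda_{\min}(\widetilde{\mathfrak{t}}_j)\geq-3$ and $V_s(\mathfrak{t})\cap V_s(\widetilde{\mathfrak{t}}_j)\neq\emptyset$, then Lemma \ref{directsumcombi} would force any $x\in V_s(\mathfrak{t})\cap V_s(\widetilde{\mathfrak{t}}_1)$ and $y\in V_s(\mathfrak{t})\cap V_s(\widetilde{\mathfrak{t}}_2)$ to be non-adjacent in $\mathfrak{t}$ and share no fat neighbor, so $\{x,y\}$ could not be an edge of the special graph $\widetilde{E}_i$. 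The connectedness of $\widetilde{E}_i$ rules this out, so $\mathfrak{t}$ is $(-3)$-irreducible.

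\emph{Uniqueness and maximality.} Both rest on one Perron--Frobenius computation. Let $\mathfrak{t}'$ be any fat tree-like Hoffman graph with special graph $(\widetilde{E}_i,\text{sgn})$ and $\lambda_{\min}(\mathfrak{t}')\geq -3$. After applying the sign-switching $\Delta$ supplied by Lemma \ref{-similar}(i), the Perron eigenvector $\tilde u>0$ of $3I-\Delta Sp(\mathfrak{t}')\Delta = A_{\widetilde{E}_i}+\mathrm{diag}(3-|N^{f}_{\mathfrak{t}'}(x)|)$ at its top eigenvalue $3$ satisfies $A_{\widetilde{E}_i}\tilde u=(3-|N^{f}_{\mathfrak{t}'}|)\tilde u$ componentwise. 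Pairing this against the positive Perron eigenvector $u$ of $A_{\widetilde{E}_i}$ for eigenvalue $2$ gives $\sum_{x}(1-|N^{f}_{\mathfrak{t}'}(x)|)\,u_x\tilde u_x=0$; since $u_x\tilde u_x>0$ and $|N^{f}_{\mathfrak{t}'}(x)|\geq1$, every slim vertex has exactly one fat neighbor. Combined with the tree-like restriction that two slim vertices share at most one fat, and the hypothesis of no two incident $(-)$-edges (which prevents any fat from having three or more slim neighbors), the shared fats are precisely the $f_{xy}$ of the $(-)$-edges and the remaining fats are leaves. Hence $\mathfrak{t}'\cong\mathfrak{t}$, giving uniqueness. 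For the remaining maximality half of the $3$-seedling property, if $\mathfrak{t}\subsetneq\mathfrak{t}''$ with $\mathfrak{t}''$ a $(-3)$-irreducible tree-like Hoffman graph, I would lift the simple $-3$-eigenvector of $Sp(\mathfrak{t})$ by zeros on the new slim vertices to obtain a vector realising $\lambda=-3$ for $Sp(\mathfrak{t}'')$ with zero entries, contradicting the full-support conclusion from Lemma \ref{-similar}(ii) and Perron--Frobenius applied to the connected $\mathfrak{t}''$.

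\emph{Main obstacle.} The subtlest point is the reduction from a general fat tree-like $\mathfrak{t}'$ with special graph $(\widetilde{E}_i,\text{sgn})$ to the specific combinatorial structure of $\mathfrak{t}$: after the weighted identity forces $|N^{f}_{\mathfrak{t}'}|\equiv 1$, one still has to exclude exotic fat-attachment patterns. This is exactly where the no-incident-$(-)$-edges assumption is used to ensure each fat vertex sits on at most two slim vertices and corresponds either to exactly one $(-)$-edge or to a leaf.
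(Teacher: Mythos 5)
Your construction of $\mathfrak{t}$, the identity $Sp(\mathfrak{t})=B-I$, and the switching argument giving $\lambda_{\min}(\mathfrak{t})=-3$ coincide with the paper's. Your Perron--Frobenius pairing for uniqueness (forcing $|N^{f}_{\mathfrak{t}'}(x)|=1$ for every slim vertex) is a valid and rather more self-contained substitute for the paper's route, which instead invokes Theorem \ref{lattice}(ii) together with Remark \ref{F-3seedlings}. The genuine problem is your $(-3)$-irreducibility step. Lemma \ref{directsumcombi}(iv) does \emph{not} force slim vertices in different summands to be non-adjacent and fat-disjoint; it says they have at most one common fat neighbour \emph{and they have one if and only if they are adjacent}. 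Since $(-3)$-reducibility permits attaching new fat vertices before decomposing, the dangerous case is exactly the one your reading excludes by fiat: for a $(+)$-edge $xy$ of $\mathcal{S}(\mathfrak{t})$ one attaches a new fat vertex adjacent to both $x$ and $y$, which makes $Sp(\widetilde{\mathfrak{t}})_{xy}=1-1=0$ and allows $x$ and $y$ to lie in different summands. Your connectedness argument only disposes of crossing $(-)$-edges (where the lemma really does force a contradiction, since those endpoints are non-adjacent yet share a fat vertex). As written, irreducibility is therefore unproved.

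The paper closes precisely this hole: such an attachment produces a slim vertex with two fat neighbours, so by Theorem \ref{lattice}(ii) the relevant indecomposable part is integrally representable of norm $3$, whence so is $\mathfrak{t}$ --- contradicting the fact that a fat tree-like Hoffman graph with special graph $\widetilde{E}_i$ is not integrally representable of norm $3$ (Remark \ref{F-3seedlings}), a fact your write-up never establishes and hence cannot lean on. An alternative patch in the spirit of your own computations: with $Au=2u$, $u>0$ the Perron vector of $\widetilde{E}_i$ and $-DSp(\mathfrak{t})D=I+A$, attaching a common fat vertex to a $(+)$-edge $xy$ adds $(D_{xx}u_x+D_{yy}u_y)^2=(u_x-u_y)^2>0$ to the Rayleigh quotient of $Du$ (adjacent Perron entries of $\widetilde{E}_i$ are never equal), so $\lambda_{\min}(\widetilde{\mathfrak{t}})<-3$. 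One of these arguments is needed. Two smaller repairs: in the maximality step, padding the $-3$-eigenvector by zeros does not directly give an eigenvector of $Sp(\mathfrak{t}'')$ --- you should argue that the padded vector attains the minimal Rayleigh quotient $-3$ and is therefore an eigenvector with a zero entry, contradicting Perron--Frobenius --- and you should also handle the case where $\mathfrak{t}''$ has the same slim vertices as $\mathfrak{t}$ but strictly more fat vertices, where no zero padding occurs.
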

\begin{proof}
Let $(\widetilde{E}_i,\text{sgn})$ be an edge-signed graph with no two $(-)$-edges incident. If a fat $3$-seedling having $(\widetilde{E}_i,\text{sgn})$ as its special graph, then it is not integrally representable of norm $3$ by Remark \ref{F-3seedlings} and every slim vertex has exactly one fat neighbor by Theorem \ref{lattice} (ii). Now we define a tree-like Hoffman graph $\mathfrak{t}$, which is obtained by replacing \raisebox{-0.5ex}{\includegraphics[scale=0.8]{seedling3_fat_2}} by \raisebox{-0.5ex}{\includegraphics[scale=0.8]{seedling3_fat_1}} and attaching a fat vertex to each vertex not incident to $(-)$-edge. Clearly, the special graph of $\mathfrak{t}$ is $(\widetilde{E}_i,\text{sgn})$. We claim that $\mathfrak{t}$ is $(-3)$-irreducible, as otherwise $\mathfrak{t}$ is an induced Hoffman subgraph of $\mathfrak{h}$ with $\lambda_{\min}(\mathfrak{h})\geq-3$, which is obtained by attaching fat vertices to $\mathfrak{t}$ and is integrally representable of norm $3$ by Theorem \ref{lattice} (ii). This is not possible as $\mathfrak{t}$ is not integrally representable of norm $3$. We also have $\lambda_{\min}(\mathfrak{t})=-3$, since $Sp(\mathfrak{t})$ is similar to $-I-A(E_i)$, where $A(E_i)$ is the adjacency matrix of $E_i$ by Lemma \ref{-similar} (i). Lemma \ref{3seedling-3} shows that $\mathfrak{t}$ is a $3$-seedling. So we have shown the existence. The uniqueness is obvious and this completes the proof.
\end{proof}

\begin{thm}\label{fatseedling}
Let $\mathfrak{t}$ be a fat $3$-seedling. Then one of the following holds:
 \begin{enumerate}
\item $\mathfrak{t}$ is isomorphic to one of these five Hoffman graphs \raisebox{-0.5ex}{\includegraphics[scale=0.13]{photo1}},
 \raisebox{-0.5ex}{\includegraphics[scale=0.13]{photo2}}, \raisebox{-0.5ex}{\includegraphics[scale=0.13]{photo3}}, \raisebox{-0.5ex}{\includegraphics[scale=0.13]{photo4}}, \raisebox{-0.5ex}{\includegraphics[scale=0.13]{photo5}}.
\item Its special graph $\mathcal{S}(\mathfrak{t})$ is switching equivalent to $\widetilde{E}_6$, $\widetilde{E}_7$ or $\widetilde{E}_8$ and has no two $(-)$-edges incident.
\end{enumerate}
 \end{thm}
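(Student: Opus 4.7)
My plan is to split on whether the fat $3$-seedling $\mathfrak{t}$ is integrally representable of norm $3$.

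Case 1, when $\mathfrak{t}$ is integrally representable, is immediate from Remark \ref{F-3seedlings}: the integrally representable $3$-seedlings of norm $3$ are exactly the members of $\mathfrak{F}=\mathfrak{F}'\cup\mathfrak{C}$, and among these the fat ones are precisely the five Hoffman graphs in conclusion (i), because $\mathfrak{c}_m$ for $m\geq 4$ has interior slim vertices $y_3,\ldots,y_{m-1}$ with no fat neighbor and is therefore not fat.

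For Case 2, assume $\mathfrak{t}$ is not integrally representable of norm $3$. Theorem \ref{lattice}(ii) together with fatness forces every slim vertex of $\mathfrak{t}$ to have exactly one fat neighbor. I next show that no fat vertex $f$ has three or more slim neighbors: such neighbors cannot contain an adjacent pair in $\mathfrak{t}$ without closing a $3$-cycle with $f$ in the tree-like $\mathfrak{t}$, hence they are pairwise non-adjacent and induce a proper copy of \raisebox{-0.3ex}{\includegraphics[scale=0.13]{photo3}} in $\mathfrak{t}$, which by the opening Proposition of Subsection 3.2 would force $\lambda_{\min}(\mathfrak{t})<-3$. In particular $\mathfrak{t}\not\cong$ \raisebox{-0.3ex}{\includegraphics[scale=0.13]{photo3}}, so that same Proposition gives that $\mathcal{S}(\mathfrak{t})=(T',\text{sgn})$ is an edge-signed tree. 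Since every slim vertex has exactly one fat neighbor, $Sp(\mathfrak{t})$ has diagonal $-1$ and off-diagonal entries in $\{-1,0,+1\}$, and Lemma \ref{-similar}(i) yields a $\pm 1$ diagonal $D$ with $-D\,Sp(\mathfrak{t})\,D=I+A(T')$, where $A(T')$ is the adjacency matrix of $T'$. Hence $\lambda_{\min}(\mathfrak{t})=-1-\lambda_{\max}(T')\geq-3$, so $\lambda_{\max}(T')\leq 2$, and Smith's theorem forces $T'\in\{A_n,D_n,E_i,\widetilde{D}_n,\widetilde{E}_i\}$. Two $(-)$-edges incident at a vertex $y$ would make all three slim endpoints share the unique fat neighbor of $y$, again producing a fat vertex with three slim neighbors, so no two $(-)$-edges of $\mathcal{S}(\mathfrak{t})$ are incident.

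It remains to force $T'\in\{\widetilde{E}_6,\widetilde{E}_7,\widetilde{E}_8\}$. For $T'$ a finite Dynkin tree one has $\lambda_{\max}(T')<2$ and hence $\lambda_{\min}(\mathfrak{t})>-3$; my plan is to enlarge $\mathfrak{t}$ by attaching one pendant slim vertex (either with a brand-new fat leaf, giving a pendant $(+)$-edge, or by sharing a currently single-slim fat vertex of $\mathfrak{t}$, giving a pendant $(-)$-edge) so that $T''$ still lies in Smith's list, producing a tree-like, fat, $(-3)$-irreducible Hoffman graph properly containing $\mathfrak{t}$ and thereby contradicting the maximality required of a $3$-seedling. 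For $T'=\widetilde{D}_n$ no such Smith-list extension exists, and instead I would exhibit an explicit integer representation of $\mathfrak{t}$ of norm $3$, modelled on the construction $\psi_{\mathfrak{c}_m}$ in the proof of Lemma \ref{lemF} but now propagated along the two $Y$-branches of $\widetilde{D}_n$ and twisted by the matching of $(-)$-edges, contradicting the Case 2 assumption. The main obstacle is verifying that the pendant enlargement preserves $(-3)$-irreducibility rather than merely $\lambda_{\min}\geq-3$: concretely one must rule out every decomposition $\widetilde{\mathfrak{t}'}=\mathfrak{h}_1\oplus\mathfrak{h}_2$ with $\lambda_{\min}\geq-3$ and slim vertices of the enlargement in both blocks, which forces a case split based on the sign of the adjoined edge and on whether the neighbouring fat vertex in $\mathfrak{t}$ already had one or two slim neighbours.
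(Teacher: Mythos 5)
Your Case~1 and the first two thirds of Case~2 track the paper's proof exactly: exactly one fat neighbour per slim vertex via Theorem~\ref{lattice}(ii), exclusion of two incident $(-)$-edges by producing a fat vertex with three pairwise non-adjacent slim neighbours, the congruence $-D\,Sp(\mathfrak{t})\,D=I+A(T')$, and the bound $\lambda_{\max}(T')\leq 2$ with Smith's theorem. The divergence, and the gap, lies in the last step, where you must discard $A_n$, $D_n$, $E_i$ and $\widetilde{D}_n$. Your pendant-extension plan for the finite Dynkin types needs the enlarged Hoffman graph to be $(-3)$-\emph{irreducible}, because Definition~\ref{seedling}(ii) only forbids $\mathfrak{t}$ from sitting properly inside a $(-3)$-irreducible tree-like Hoffman graph; an enlargement with $\lambda_{\min}\geq-3$ that happens to be reducible contradicts nothing. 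You flag this obstacle yourself and leave it unresolved, and it is genuinely the hard part (compare the work needed in Lemma~\ref{lemF} just to establish irreducibility of $\mathfrak{c}_m$). Similarly, your $\widetilde{D}_n$ representation is described but never constructed. As written, the proof is therefore incomplete.

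The paper avoids all of this with one observation you did not exploit: since every slim vertex has exactly one fat neighbour, $Sp(\mathfrak{t})+3I$ is congruent via a $\pm1$ diagonal matrix to $2I-A(T')$, hence (as $T'$ is a tree, so bipartite) to $2I+A(T')$; thus $\mathfrak{t}$ is integrally representable of norm $3$ if and only if $T'$ is integrally representable of norm $2$. Because the root lattices of types $A$ and $D$ embed in $\mathbb{Z}^N$, the graphs $A_n$, $D_n$ and $\widetilde{D}_n$ \emph{are} integrally representable of norm $2$, so in your Case~2 they are excluded immediately; only $E_6,E_7,E_8,\widetilde{E}_6,\widetilde{E}_7,\widetilde{E}_8$ survive, i.e.\ $T'$ is an induced subgraph of some $\widetilde{E}_i$. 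Then a single application of Proposition~\ref{Esimilar} together with the maximality clause of Definition~\ref{seedling}(ii) upgrades the finite $E_i$ to $\widetilde{E}_i$. If you wish to keep your route you must (a) prove $(-3)$-irreducibility of each pendant extension, with the case analysis you outline, and (b) exhibit the norm-$3$ representation in the $\widetilde{D}_n$ case; importing the norm-$2$ representability of the $A$/$D$ root systems as above is far shorter and closes the gap.
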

\begin{proof}
We may assume that $\mathfrak{t}$ is not integrally representable of norm $3$, as otherwise we are in case (i) by Remark \ref{F-3seedlings}. By Theorem \ref{lattice} (ii), we find that each slim vertex of $\mathfrak{t}$ has exactly one fat neighbor. Let $\mathcal{S}(\mathfrak{t})=(G,\text{sgn})$ be its special graph. Clearly, there is no two $(-)$-edges incident in $\mathcal{S}(\mathfrak{t})$, as otherwise $\mathfrak{t}$ is the Hoffman graph \raisebox{-0.5ex}{\includegraphics[scale=0.13]{photo3}} ($\mathfrak{t}$ is tree-like with $\lambda_{\min}(\mathfrak{t})\geq-3$ and $|N_{\mathfrak{t}}^f(x)|=1$ for all $x\in V_s(\mathfrak{t})$) and this is not possible. Now we focus on $G$. Lemma \ref{connected} shows that $G$ is connected. From Lemma \ref{-similar} (i), we know that there exists a diagonal matrix $D$ such that $DSp(\mathfrak{t})D=-I-A(G)$, where $Sp(\mathfrak{t})$ is the special matrix of $\mathfrak{t}$, $A(G)$ is the adjacent matrix of $G$, and $D_{xx}\in\{1,-1\}$ for all $x\in V_s(\mathfrak{t})$. This implies $\lambda_{\max}(G)=-\lambda_{\min}(\mathfrak{t})-1\leq2$ and $G$ is not integrally representable of norm $2$ (as $\mathfrak{t}$ is not integral representable of norm $3$). Hence $G$ is an induced subgraph of $\widetilde{E}_i$ for some $i\in\{6,7,8\}$, by Smith's Theorem, as mentioned in the introduction. Definition \ref{seedling} (ii) and Proposition \ref{Esimilar} show that $G$ is exactly $\widetilde{E}_i$ for some $i\in\{6,7,8\}$. This completes the proof.
\end{proof}

\begin{re}\label{number}
There are $7$ fat $3$-seedlings with its special graph switching equivalent to $\widetilde{E}_6$; $18$ fat $3$-seedlings with its special graph switching equivalent to $\widetilde{E}_7$; $50$ fat $3$-seedlings with its special graph switching equivalent to $\widetilde{E}_8$.
\end{re}

We would like to conclude this paper with the following problems:

\begin{problem}
Classify the $3$-seedlings.
\end{problem}

A subproblem of {\bf Problem 1} is the following problem.
\begin{problem}
Classify the $3$-seedlings $\mathfrak{t}$ such that there exists a reduced representation $\psi: V_s(\mathfrak{t})\rightarrow \mathbb{R}^n$ of norm $3$ for some $n$, satisfying $2\psi(x)\in \mathbb{Z}^n$ for all $x\in V_s(\mathfrak{t})$.
\end{problem}

\section*{Address:}
\textbf{a)}   Wen-Tsun Wu Key Laboratory of CAS, School of Mathematical Sciences, University
of Science and Technology of China, Hefei, Anhui, 230026, P.R. China \\

\noindent \textbf{b)} School of Mathematical Sciences, University of Science and Technology of China,
Hefei, Anhui, 230026, P.R. China\\

\noindent \textbf{b)} School of Mathematical Sciences, University of Science and Technology of China,
Hefei, Anhui, 230026, P.R. China\\
\section*{Email Address:}
%\\Email:
koolen@ustc.edu.cn (J. H. Koolen)
\newline
masoodqau27@gmail.com; masood@mail.ustc.edu.cn (M. U. Rehman)
\newline
xuanxue@mail.ustc.edu.cn (Q. Yang)
\end{document}